\documentclass{TPmod}
\usepackage{url}
\usepackage{setspace}
\usepackage{scrextend}
\usetikzlibrary{arrows.meta}

\usetikzlibrary{shapes.misc}
\usetikzlibrary{decorations.pathmorphing}

\usepackage{hyperref}
\usepackage{amsthm}
\usepackage{amssymb}
\usepackage[capitalize]{cleveref}
\usepackage{mathtools}

\newcommand{\Sym}{\mathrm{Sym}}

\newcommand{\st}{\mathrm{st}}

\def\scrA{\EuScript{A}}
\def\scrJ{\EuScript{J}}

\newcommand{\scrF}{\EuScript{F}}
\newcommand{\scrC}{\EuScript{C}}

\def\bK{\mathbb{K}}
\def\bP{\mathbb{P}}
\def\bA{\mathbb{A}}

\def\bZ{\mathbb{Z}}
\def\bQ{\mathbb{Q}}

\def\bC{\mathbb{C}}
\newcommand{\bS}{\mathbb{S}}

\def\bR{\mathbb{R}}

\def\calL{\mathcal{L}}

\def\scrD{\EuScript{D}}
\def\scrL{\EuScript{L}}
\def\scrG{\EuScript{G}}

\title{Floer theory of higher rank quiver 3-folds}
\author{Ivan Smith}
\date{January 2020}

\address{Centre for Mathematical Sciences, University of Cambridge, Wilberforce Road, CB3 0WB, U.K.}
\email{is200@cam.ac.uk }

\begin{abstract} {\sc Abstract:} 
We study threefolds $Y$ fibred by $A_m$-surfaces over a curve $S$ of positive genus.  An ideal triangulation of $S$ defines, for each rank $m$, a quiver $Q(\Delta_m)$, hence a $CY_3$-category $\scrC(W)$ for any potential $W$ on $Q(\Delta_m)$. We  show that for $\omega$ in an open subset of the K\"ahler cone, a subcategory of a sign-twisted Fukaya category of $(Y,\omega)$ is quasi-isomorphic to $(\scrC,W_{[\omega]})$ for a certain generic potential $W_{[\omega]}$. This partially establishes a conjecture of Goncharov \cite{Goncharov} concerning `categorifications' of cluster varieties of framed $\bP GL_{m+1}$-local systems on $S$, and gives a symplectic geometric viewpoint on results of Gaiotto, Moore and Neitzke \cite{GMN:snakes} on `theories of class $\mathcal{S}$'. \end{abstract}

\begin{document}
\maketitle

\section{Introduction}

Fix a pair of positive integers $(g,d)$ and consider a closed surface $\bS$ of genus $g>0$ with a non-empty collection of $d> 0$ marked points $\bP \subset \bS$.   Fix  in addition a positive integer $m> 0$, called the `rank'.  We work over the characteristic zero field $\bK = \Lambda_{\bC}$ which is the one-variable Novikov field over $\bC$. Associated to this data, there is
\begin{enumerate}
\item a $\bK$-linear CY$_3$ $A_{\infty}$-category $(\scrC, W)$,  obtained from a choice of potential $W$ on a quiver $Q(\Delta_m)$  associated to a choice of ideal triangulation $\Delta$ of $\bS$ with vertices at $\bP$ and with no self-folded triangles, see \cite{Ginzburg,Goncharov} and Section \ref{Sec:Quivers};
\item a non-compact K\"ahler Calabi-Yau threefold $(Y,\omega)$, which is the total space of a fibration by $A_m$-surfaces over $\bS$ with fibres over $\bP$ being disjoint unions of $m+1$ planes $\bC^2$, and a further collection of Lefschetz singular fibres, see \cite{Abrikosov,DDP} and Section \ref{Sec:quiver_3folds}.
\end{enumerate}
Both the above depend on choices:  the potential $W$ on the quiver (up to gauge equivalence) in the first case, and the (cohomology class of) K\"ahler form $\omega$ in the second.  We will make a further choice, which is an ordering of the $m+1$ connected components of the reducible singular fibre of $Y$ over each point of $\bP$.  The sum of  the even-indexed components in this ordering, summed over each such reducible fibre, defines a class $b\in H^2(Y;\bZ/2)$. Relative to this background class, there is a sign-twisted Fukaya category $\scrF((Y,\omega); b)$, which is an $A_{\infty}$-category over $\bK$ whose objects are  $b$-relatively spin graded $\omega$-Lagrangian submanifolds equipped with suitable brane data.   (The choice of cycle representative for $b$ is natural in a particular setting encountered later, but monodromy considerations show that up to quasi-isomorphism the category only depends on the number of components at each $p \in \bP$.)

\begin{thm}\label{thm:main}
There is a non-empty open subset $U\subset H^2(Y;\bR)$ of the K\"ahler cone, and a map $U \to \{\mathrm{potentials}\}/\{\mathrm{gauge}\}$, $[\omega] \mapsto W_{[\omega]}$, such that for $[\omega] \in U$ there is a fully faithful embedding $(\scrC, W_{[\omega]}) \hookrightarrow \scrF((Y,\omega);b)$.
\end{thm}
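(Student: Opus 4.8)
The plan is to construct an explicit correspondence between the geometric generators of the Fukaya category (Lagrangian spheres and their iterated cones) and the algebraic generators coming from the Ginzburg dg-algebra of the quiver with potential. The natural starting point is the observation that an $A_m$-surface degenerates to a chain of $\mathbb{P}^1$s, and that the fibration structure over $\mathbb{S}$ allows one to build distinguished Lagrangian 3-spheres in $Y$ as Lagrangian "thimbles" or matching cycles. Specifically, each edge of the triangulation $\Delta$ should determine, for each simple root of $A_m$, a vanishing cycle in the $A_m$-fibre, and parallel transport of this cycle along a path in $\mathbb{S}$ (dictated by the combinatorics of $\Delta$) produces a Lagrangian matching sphere $V_i \subset Y$. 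The vertices of the quiver $Q(\Delta_m)$ are then in bijection with these matching spheres, and the first step is to make this dictionary precise, including the choice of gradings and the $b$-relative spin structures that render the $V_i$ objects of $\scrF((Y,\omega);b)$.

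Once the objects are fixed, I would compute the morphism spaces $\mathrm{hom}_{\scrF}(V_i, V_j)$ via Floer cohomology and match them against the arrows of $Q(\Delta_m)$. The expected mechanism is that two matching spheres whose underlying vanishing cycles intersect minimally in the $A_m$-fibre, and whose base paths share an endpoint at a marked point $p\in\mathbb{P}$, contribute exactly one Floer generator per arrow in the quiver. The Calabi--Yau condition and the grading shifts should force $\mathrm{hom}^{\bullet}(V_i,V_j)$ to have the shape predicted by Koszul duality for the Ginzburg algebra, namely generators in degrees $0,1,2,3$ arranged symmetrically. The potential $W_{[\omega]}$ then enters through the $A_{\infty}$-structure: the higher products $\mu^d$ counting holomorphic polygons with boundary on the $V_i$ should reproduce the cyclic derivatives $\partial_a W$, with the coefficients of $W$ given by (the Novikov exponentials of) the symplectic areas determined by $[\omega]$. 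This is the step where the K\"ahler class directly controls the algebra, and it is what produces the map $[\omega]\mapsto W_{[\omega]}$ on the open subset $U$.

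The \textbf{main obstacle} I anticipate is establishing genericity of $W_{[\omega]}$ and hence the \emph{full faithfulness} of the embedding, rather than merely exhibiting a functor. Full faithfulness requires showing that no additional holomorphic discs contribute beyond those accounted for by the quiver combinatorics, i.e. that the count of polygons is both complete and free of unexpected cancellations or wall-crossing. The subtlety is twofold: first, one must ensure that for $[\omega]$ in a suitable open set $U$ the relevant areas are large enough (or generic enough) that the potential is generic in the sense of the cluster-algebraic literature, which plausibly requires a careful degeneration or neck-stretching argument to localize holomorphic curves near the singular fibres; second, one must verify that the subcategory generated by the $V_i$ is quasi-isomorphic to $(\scrC, W_{[\omega]})$ on the nose, which means controlling the $A_{\infty}$-relations to all orders. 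I would handle this by a degeneration-of-areas or grading-pureness argument to show that the leading-order disc counts are forced, combined with a formality or rigidity statement for the Ginzburg dg-category that promotes agreement of the low-order structure to a genuine quasi-isomorphism.

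\medskip
\noindent The remaining steps---constructing $U$ as the locus where all relevant areas are positive and the resulting potential is generic, and checking that the sign-twisting by $b$ matches the signs in the Ginzburg differential---should follow from the explicit area computations on the $A_m$-fibration together with the orientation conventions fixed when the branes $V_i$ are defined.
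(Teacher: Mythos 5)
Your proposed object set is too small, and this is a fatal gap rather than a technical one. You take one matching sphere per edge of $\Delta$ per simple root of $A_m$, giving $(6g-6+3d)m$ spheres, and assert these are in bijection with the vertices of $Q(\Delta_m)$. But $Q(\Delta_m)$ has $(6g-6+3d)m + (4g-4+2d)m(m-1)/2$ vertices: besides the $m$ vertices on each edge of $\Delta$ there are $m(m-1)/2$ vertices in the interior of each ideal triangle, and for $m>1$ these do not correspond to matching cycles over arcs joining two Lefschetz critical values. The paper's central new construction is the \emph{tripod sphere} (Lemma \ref{lem:tripod_vs_matching_sphere}): a Lagrangian $S^3$ fibred over a trivalent graph spanning \emph{three} Lefschetz critical values, whose existence requires the vanishing cycles to form an $a$, $\tau_a^{-1}(b)$, $b$ pattern in the $A_m$-fibre; this is exactly what populates the interior vertices (Proposition \ref{prop:sphere_configuration}). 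Without these objects your cohomology-level algebra cannot match the Ginzburg algebra of $Q(\Delta_m)$, so the dictionary you set up in your first paragraph does not exist for $m>1$. Relatedly, the interior ``white'' $3$- and $4$-cycles $q_w$ of the quiver have all boundary arrows supported on tripods, and establishing the corresponding nonzero products (Lemmas \ref{lem:more} and Proposition \ref{prop:rigid_quadrilateral}) requires deforming tripods back to matching spheres and a Lagrange-surgery/cobordism argument — none of which is visible from a pure matching-cycle picture.

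Your treatment of full faithfulness also inverts the logic of the actual argument. In the paper, $W_{[\omega]}$ is \emph{defined} as whatever cyclic potential encodes the $A_{\infty}$-structure on the full subcategory generated by the spheres (Lemma \ref{lem:lagrangians_to_quiver}, using strict unitality and cyclicity over a characteristic-zero field containing $\bR$); fullness and faithfulness are then tautological, and the real work is showing the potential is \emph{generic}, i.e.\ that every chordless cycle ($t_b$, $q_w$, and $L_p^{(j)}$) carries a nonzero coefficient. The non-primitive terms $W'$ are explicitly \emph{not} controlled — the paper states they ``cannot a priori be controlled'' and leaves the lift to $\{\mathrm{potentials}\}/\scrG$ as an undetermined mirror map. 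So your plan to show ``no additional holomorphic discs contribute beyond those accounted for by the quiver combinatorics'' is both unnecessary and unachievable by the paper's methods, and your proposed ``formality or rigidity statement for the Ginzburg dg-category'' is unavailable: for $m>1$ the moduli space of $A_{\infty}$-structures on the cohomological category is positive-dimensional, which is precisely why the result is stated with an unspecified generic potential. Finally, you do not engage with the one genuinely delicate sign issue: the two rigid discs over each $L_p^{(j)}$ bound with opposite signs for the untwisted spin structures, and the nonvanishing of their count — hence genericity of the potential — depends on the specific background class $b$ supported on the even-indexed components of the reducible fibres.
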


The hypothesis $g(\bS)>0$ simplifies the holomorphic curve theory (it implies the Fukaya category can be constructed using classical transversality theory).  After passing to twisted complexes on both sides, the image of the embedding of Theorem \ref{thm:main}  is a split-closed triangulated subcategory. We conjecture that image co-incides with the full subcategory of $\mathrm{Tw}\,\scrF(Y,\omega;b)$ generated by Lagrangian spheres, and is therefore intrinsic to the symplectic topology of $(Y,\omega)$. One could view the algebraic model $(\scrC, W_{[\omega]})$ as a `non-commutative mirror' to $(Y,\omega)$, and Theorem \ref{thm:main} as a statement of homological mirror symmetry in this setting\footnote{A  related result for $A_m$-fibrations over $\bS = \bC$, but concerning derived categories rather than Fukaya categories, appears in \cite{Velez-Boer}.}.

Goncharov conjectured in \cite[Conjecture 6.2]{Goncharov} that the $CY_3$-category associated to $Q(\Delta_m)$ and the `canonical' potential  $W = W(\Delta_m)$ on the underlying bipartite graph (as introduced in \cite{Franco-Hanany-etal}) should be realised as a subcategory of a Fukaya category.  Goncharov's conjecture, stemming from general expectations around `categorifications' of cluster varieties, was  futher elaborated by Abrikosov \cite[Conjecture 1.4]{Abrikosov}; Theorem \ref{thm:main}  proves the formulation given there.   The result also relates to questions of Shende, Treumann and Williams \cite[Problems 1.15 \& 1.16]{STW} on the existence of potentials governing local Calabi-Yau 3-folds associated to surfaces. 
 It should be possible to relate the canonical potential, which defines a $\bC$-linear category,  to the one coming from symplectic topology when the surface $\bS$ is punctured and the associated  threefold is exact as a symplectic manifold (this is true in the simplest case when there are punctures and no marked points / reducible fibres, in which case the canonical potential has only cubic and quartic terms; one can more generally  work in a  `relative Fukaya category', cf. Remark \ref{rmk:relative}).    In the non-exact case, holomorphic curves are weighted by their areas encoded in the Novikov variable; the resulting potential never has trivial Novikov valuation.  One can still recover the cohomology class $[\omega]$ from the potential, cf. Remark \ref{rmk:class_of_omega}.

The theorem is proved, following \cite{Smith:quiver} for $m=1$, by finding a collection of Lagrangian 3-spheres $\{L_v \, | \, v\in \mathrm{Vert}(Q(\Delta_m))\}$ in $Y$ whose Floer cohomology algebra $\oplus_{v,v'} HF^*(L_v,L_{v'})$  agrees with the Koszul dual to the Ginzburg algebra associated to $Q(\Delta_m)$.  (The open subset $U = U(\Delta) \subset H^2(Y;\bR)$ of the K\"ahler cone,  which in principle depends on $\Delta$,  is any for which the relevant configuration of Lagrangian spheres exists.   It is not clear if $\cup_{\Delta} U(\Delta)$ covers the K\"ahler cone.)   The general theory of cyclic $A_{\infty}$-structures then implies that the subcategory $\scrF(\scrL) \subset \scrF(Y;b)$ generated by $\{L_v\}$ is governed by some potential $W_{[\omega]}$ on $Q(\Delta_m)$. Further study of non-vanishing holomorphic polygon counts shows that $W_{[\omega]} = W_{\bf c}(\Delta_m) + W'$ for a $\bK$-coefficient vector ${\bf c}$  recording areas of polygons associated to certain distinguished `primitive' (chordless) cycles, and some `nonlocal' terms $W'$, which cannot \emph{a priori} be controlled.  (The `canonical' potential $W(\Delta_m)$ is exactly  $W_{\bf c}(\Delta_m)$ for a vector of coefficients each of which is $\pm 1$;  in the non-exact case we record information on $[\omega]$ in ${\bf c}$.)
We conclude that some $A_{\infty}$-deformation of $(\scrC, W_{\bf c}(\Delta_m))$ embeds into the Fukaya category, without specifying exactly which; resolving this ambiguity is a version of fixing a mirror map.   

On the geometric side, the crucial new ingredient when passing from $m=1$ to $m>1$ is  the presence of `tripod' Lagrangian spheres in $A_m$-Milnor fibres, see Section \ref{Sec:Tripods}, and their appearance in the sphere configurations associated to $\Delta_m$.

Theorem \ref{thm:main} relates to work of Gaiotto, Moore and Neitzke \cite{GMN:spectral, GMN:snakes} on `theories of class $\mathcal{S}$', certain four-dimensional $\mathcal{N}=2$ field theories.  They relate the BPS  degeneracies of solitons in  such theories   to `spectral networks' on a Riemann surface equipped with a tuple of meromorphic differentials. In the rank $m=1$ case, this relates BPS states and saddle connections of meromorphic quadratic differentials \cite{GMN, BridgelandSmith}.  Long-standing expectations in both mathematics and physics suggest that the counting of BPS states should be formalised by counts of stable objects in triangulated categories such as Fukaya categories. The tripod spheres which enter  into the proof of Theorem \ref{thm:main} exactly correspond to the simplest spectral networks after saddle connections, see \cite[Figure 3]{GMN:spectral}. The possible \emph{embedded} graded Lagrangians in $Y_{\Phi}$ are constrained by results of  \cite{GanatraPomerleano}, and one only obtains connect sums of copies of $S^1\times S^2$ and $3$-tori.  It would be interesting to construct unobstructed immersed special Lagrangian representatives for more general spectral networks.

\begin{rmk}
One can have pairs of tripods which meet at all three feet,  and give rise to Lagrangian 3-spheres $L_0, L_1$ which meet at 3 transverse intersection points of equal Maslov grading, and bounding no holomorphic discs. The subcategory $\langle L_0, L_1 \rangle \subset \scrF(Y_{\Phi};b)$ is quasi-isomorphic to the Ginzburg category of the three-arrow Kronecker quiver. It then follows from \cite{Reineke}, see also \cite{Mainiero}, that there are classes $\eta \in K(\scrF(Y_{\Phi};b))$ for which the DT-invariant of $d\cdot \eta$ grows exponentially with $d$, a phenomenon that does not occur for the 3-folds in rank one \cite{BridgelandSmith}. (The associated field theories of class $\mathcal{S}$ have `wild BPS spectra' and `BPS giants'.)   The 3-folds $Y_{\Phi}$ for rank $m>1$ contain graded Lagrangian submanifolds diffeomorphic to $(S^1\times S^2) \# (S^1 \times S^2)$, obtained from Lagrange surgery $L_0 \# L_1$ on $L_0$ and $L_1$.  Irreducible modules over the based loop space $\Omega(L_0\# L_1)$ give rise to candidate stable objects to realise wild BPS states in the Fukaya category. \end{rmk}

\paragraph{Acknowledgements.}   Dmitry Tonkonog contributed several ideas at an early stage. Thanks to Mohammed Abouzaid, Efim Abrikosov, Tom Bridgeland, Andy Neitzke and Pietro Longhi for  helpful conversations, and to Sasha Goncharov for his interest.   The author is partially funded by a Fellowship from the Engineering and Physical Sciences Research Council, U.K.

\section{Quivers and potentials from ideal triangulations\label{Sec:Quivers}}

\subsection{Categories from quivers with potential}

 A well-known construction due to Ginzburg \cite{Ginzburg} associates to a quiver with potential $(Q,W)$  a 3-dimensional Calabi-Yau cyclic $A_{\infty}$-category $\scrC(Q,W)$.  The category $\scrC(Q,W)$ is the total $A_{\infty}$-endomorphism algebra of a collection of spherical objects $S_v$ indexed by the vertices $v\in Q_0$ of $Q$; it  is concentrated in degrees $0 \leq * \leq 3$, the degree one morphism spaces are based by the arrows $Q_1$ of $Q$, and the potential gives a cyclic encoding of the non-trivial $A_{\infty}$-products,  see \cite[Section 2]{Smith:quiver} for a summary of the construction.  We denote by $\scrD(Q,W)$ the corresponding derived category.  Mutations of $(Q,W)$ induce equivalences of the derived categories by \cite{Keller-Yang}.

\begin{lem}\label{lem:lagrangians_to_quiver}  Let $(X,\omega)$ be a symplectic manifold with a well-defined Fukaya category $\scrF(X)$. Suppose we have objects $\{L_v \in \scrF(X) \, | \, v \in Q_0\}$ for which  the total (cohomological) endomorphism algebras
\[
\oplus_{v, v' \in Q_0} \, HF^*(L_v, L_{v'}) \cong \oplus_{v,v' \in Q_0} \, \Hom_{\scrC}(S_v, S_{v'})
\]
are isomorphic as graded algebras over the semisimple ring $\oplus_{v\in Q_0} \bK_v$ (with idempotents the units of the objects $L_i$ respectively $S_i$). Then the full $A_{\infty}$-subcategory $\scrL \subset \scrF(X)$ generated by the $\{L_v\}$ is encoded up to quasi-isomorphism by a potential $W_{\scrL}$ on $Q$.
\end{lem}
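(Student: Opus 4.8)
The plan is to exploit the fact that a Fukaya category of closed Lagrangians in a threefold carries a cyclic (proper Calabi--Yau) structure, and to combine this with the dictionary between cyclic $A_\infty$-structures and potentials, so that the required $W_{\scrL}$ emerges as the generating function of the cyclic products of $\scrL$. The argument follows \cite{Smith:quiver} for $m=1$.

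First I would record the cyclic structure. Each $L_v$ is a closed graded Lagrangian in a threefold, so $HF^*(L_v,L_{v'})$ is finite-dimensional and Poincar\'e duality furnishes nondegenerate pairings $HF^k(L_v,L_{v'}) \otimes HF^{3-k}(L_{v'},L_v) \to \bK$ of total degree $-3$, compatible with composition. Thus $\scrL$ is proper Calabi--Yau of dimension $3$, i.e. cyclic up to homotopy. I would then pass to a minimal model: the cohomology algebra $A = \oplus_{v,v'} HF^*(L_v,L_{v'})$ with $\mu^1=0$, chosen so that the pairing becomes strictly cyclically symmetric. By hypothesis $A$ is, as a graded algebra over $\oplus_v \bK_v$, the endomorphism algebra of the spherical generators $S_v$ of $\scrC(Q,W)$: concentrated in degrees $0 \le * \le 3$, with the units in degree $0$, the arrows $Q_1$ spanning degree $1$, their Poincar\'e duals spanning degree $2$, and the duals of the units in degree $3$.

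Next I would produce the potential. Since the pairing is nondegenerate and strictly cyclic, the family of higher products $\{\mu^d\}_{d \ge 2}$ is equivalent data to a single cyclically invariant function
\[
 W_{\scrL} = \sum_{d \ge 2} \tfrac{1}{d+1}\, \langle \mu^d(a,\dots,a),\, a\rangle,
\]
this being the standard correspondence (Kontsevich) between cyclic minimal $A_\infty$-structures with a degree $-3$ pairing and superpotentials. A degree count shows which cyclic monomials survive: $\langle \mu^d(a_1,\dots,a_d),a_{d+1}\rangle$ is nonzero only when $\sum_{i=1}^{d+1}\deg a_i = d+1$, and since the $a_i$ lie in degrees $0$--$3$ with strict units in degree $0$, unitality forces the essential terms to be cyclic words of length $d+1$ in the degree-$1$ generators, i.e. in the arrows $Q_1$. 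Hence $W_{\scrL}$ is a genuine potential on $Q$ (a cyclically invariant element of the completed path algebra $\widehat{\bK Q}$). The products involving the degree $2$ and degree $3$ generators are then pinned down by cyclic symmetry, unitality, and the fixed algebra structure, reproducing exactly the relations built into Ginzburg's construction of $\scrC(Q,W_{\scrL})$; thus $\scrL \simeq \scrC(Q,W_{\scrL})$.

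The hard part will be the strictification in the first step: upgrading the Fukaya category's homotopy-cyclic (weak proper Calabi--Yau) structure to a strictly cyclic minimal model on $A$, so that the generating-function recipe for $W_{\scrL}$ is legitimate. For this I would invoke the existence of cyclic minimal models for $A_\infty$-algebras carrying a nondegenerate homotopy-invariant pairing (Kontsevich and Soibelman), or the cyclically symmetric construction of the Fukaya category. A secondary subtlety is verifying that the homological-perturbation choices alter $W_{\scrL}$ only within its gauge class, which is precisely what makes the conclusion ``encoded up to quasi-isomorphism'' well posed.
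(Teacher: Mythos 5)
Your proposal is correct and follows essentially the same route as the paper: strictify the unital and cyclic structure on a minimal model (citing Kontsevich--Soibelman or Fukaya's cyclic construction, exactly as the paper does), then observe that the degree count for a CY$_3$ pairing forces all essential cyclic products to be words in the degree-one generators, i.e.\ a potential on $Q$. The paper's proof is just a terser version of this, with the degree book-keeping delegated to the Ginzburg construction and the strict unitality justified via the reduced Hochschild complex.
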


\begin{proof} This is almost tautological. The $A_{\infty}$-structure on Floer cochains $\oplus_{i.j} CF^*(L_i,L_j)$ can always be taken to be strictly unital, since the reduced Hochschild complex is quasi-isomorphic to the full Hochschild complex over a field, and that strictly unital structure can be pushed to cohomology by homological perturbation. The fact that $\bR \subset \bK$ means that the $A_{\infty}$-structure can also be taken to be strictly cyclic (this holds by abstract theory over any characteristic zero field \cite{KontSoi}, but can be achieved for geometric reasons for fields $\bK \supset \bR$  \cite{Fukaya:cyclic}). The book-keeping in the Ginzburg construction then shows that the subcategory $\scrL$ is encoded by a cyclic potential on $Q$.  
\end{proof}

In the setting of Lemma \ref{lem:lagrangians_to_quiver}, the cubic terms in the potential encode the Floer product, which is well-defined; the higher order terms determine the higher $A_{\infty}$-products which depend on choices of almost complex structure and perturbation data.  Lemma \ref{lem:lagrangians_to_quiver} implies that, given a finite collection $\{L_v\, | \, v \in Q_0\}$ of Lagrangian rational homology spheres in a CY 3-fold for which the morphism spaces $HF^*(L_v, L_{v'})$ are concentrated in degrees $1,2$ for all $v\neq v'$, then the $A_{\infty}$-structure on $\oplus_{v,v'} HF^*(L_v,L_{v'})$ is encoded by a cyclic potential on the quiver with vertices $Q_0$ and arrow spaces $Q_1$ indexed by bases for $HF^1(L_v,L_{v'})$.

\subsection{Gauge transformations}

Potentials are called \emph{right-equivalent}  if they are related by an automorphism of the completed path algebra; right equivalent potentials $W$ and $W'$ on $Q$ yield quasi-isomorphic $A_{\infty}$-categories $\scrC(Q,W) \simeq \scrC(Q,W')$, cf. \cite{Ginzburg,KontSoi}. There are $A_{\infty}$-equivalences which do not arise from right equivalences, for instance ones acting non-trivially on cohomology, and ones arising from the canonical $\bK^*$-action on $A_{\infty}$-structures which rescales $m^k$ by $\lambda^{k-2}$.

 The group $\scrG$ of right equivalences of the completed path algebra decomposes as a semidirect product
\[
\scrG = \scrG^{un} \rtimes \scrG^{diag}
\]
where the second factor of `diagonal' automorphisms are those which arise from automorphisms of the vector space of arrows (as bimodules over the semisimple ring given by the idempotent lazy paths at the vertices), and the first factor of `unitriangular' automorphisms are those induced by maps from the arrow space into the subspace of paths of length $\geq 2$.  When the arrow space between any two vertices is at most one-dimensional, then $\scrG^{diag} \cong (\bK^*)^{|\mathrm{Vert}(Q)|}$ acts just by diagonally rescaling the arrows.

A quiver has a finite distinguished set of `chordless' cycles, see \cite{DWZ}. A potential is `primitive' if it is a combination of chordless cycles, and every chordless cycle appears with non-zero coefficient. A potential is  `generic' if its projection to the span of chordless cycles is primitive.  Using the fact that the set of chordless cycles is intrinsic to the quiver, \cite[Section 5.1]{Abrikosov} asserts that projection to the primitive part of a potential yields a $\scrG^{diag}$-equivariant projection
\begin{equation} \label{eqn:project_potential}
\{\mathrm{Generic \  potentials}\} / \scrG \longrightarrow \{\mathrm{Primitive \ potentials}\}/\scrG^{diag}.
\end{equation}
Return to the situation of Lemma \ref{lem:lagrangians_to_quiver}. Lagrangian 3-spheres will persist as Lagrangians under any sufficiently small deformation of the symplectic form $[\omega] \in U \subset H^2(X;\bR)$ on $X$, and (appealing to sufficient technology in the non-weakly-exact case) will remain unobstructed. Suppose furthermore that the Floer cohomologies $\oplus_{v,v'} HF^*(L_v,L_{v'})$ do not change, as graded $\bK$-vector spaces, as one varies the symplectic form in $U$.  Then one obtains maps
\begin{equation} \label{eq:mirror}
H^2(X;\bR) \supset U \longrightarrow \{\mathrm{Potentials}\} / \scrG \longrightarrow \{\mathrm{Primitive \ potentials}\}/\scrG^{diag}.
\end{equation}
The RH group above is \emph{a priori} finite dimensional, whilst the set $\{\mathrm{Potentials}\}/\scrG$ of all cyclic $A_{\infty}$-structures need not be. Our aim is to determine the composite map \eqref{eq:mirror}; giving its lift to  $\{\mathrm{Potentials}\} / \scrG$ is somewhat like finding a `mirror map', which we leave undetermined. 

\begin{rmk} The map \eqref{eq:mirror} is not a local isomorphism (the domain and codomain have different dimensions). For one thing, the coefficients of the potential -- which record areas of holomorphic polygons determined by areas of polygonal regions in the dual cellulation $\Delta_m^{\vee}$ -- are governed by $[\omega] \in H^2(Y_{\Phi}, \sqcup_v L_{v};\bR)$, whilst the Fukaya category $\scrF(Y_{\Phi})$ only depends on $[\omega] \in H^2(Y_{\Phi};\bR)$.  \end{rmk}

\subsection{Quivers with potential from triangulations}
We summarise some results from \cite{Abrikosov}. Take an ideal triangulation $\Delta$ of $\bS$ with vertices at $\bP \subset \bS$ and with no self-folded triangles (i.e. all triangles have three distinct edges).  We place $m$ vertices on each edge of the ideal triangulation, and then subdivide the triangulation (cf. Figure \ref{Fig:inscribed}, showing the case $m=4$) to obtain a new triangulation $\Delta_m$.  We view this as bicoloured as in Figure \ref{Fig:inscribed}, so each triangle of $\Delta$ now has inscribed within it $m(m+1)/2$ black triangles.   We then orient the edges of these inscribed black triangles as in Figure \ref{Fig:inscribed}; doing this for each ideal triangle in $\Delta$ yields a quiver drawn on the surface $\bS$, each vertex of which is one of those originally placed on $\Delta$. We denote the resulting quiver by $Q(\Delta_m)$; it depends on $\Delta$ and the choice of rank $m \geq 1$.  

\begin{rmk}\label{rmk:numerics}
An ideal triangulation of a surface of genus $g$ with $d$ marked points (vertices) has $6g-6+3d$ edges and $4g-4+2d$ faces. 
\end{rmk}

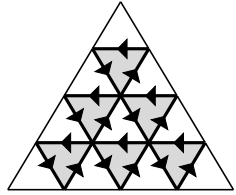
\begin{figure}[ht]
\begin{center}
\begin{tikzpicture}[scale=0.5]

\draw[semithick] (-3,0) -- (3,0);
\draw[semithick] (-3,0) -- (0,5);
\draw[semithick] (0,5) -- (3,0);

\draw[semithick] (-1.5,0) -- (-3 + 0.75, 1.25);
\draw[semithick] (0,0) -- (-3+1.5, 2.5);
\draw[semithick] (1.5,0) -- (-3+2.25, 3.75);

\draw[semithick] (-1.5,0) -- (3-2.25,3.75);
\draw[semithick] (0,0) -- (3-1.5,2.5);
\draw[semithick] (1.5,0) -- (3-0.75,1.25);

\draw[semithick] (-2.25,1.25) -- (2.25,1.25);
\draw[semithick] (-1.5,2.5) -- (1.5,2.5);
\draw[semithick] (-0.75,3.75) -- (0.75,3.75);

\draw[fill,gray!30] (-0.75,3.75) -- (0.75,3.75) -- (0, 2.5); 
\draw[fill,gray!30] (-1.5,2.5) -- (0,2.5) -- (-0.75, 1.25); 
\draw[fill,gray!30] (0,2.5)--(1.5,2.5) -- (0.75,1.25);
\draw[fill,gray!30] (-2.25,1.25) -- (-0.75,1.25) -- (-1.5,0);
\draw[fill,gray!30] (-0.75,1.25) -- (0.75,1.25) -- (0,0);
\draw[fill,gray!30] (0.75,1.25) -- (2.25,1.25) -- (1.5,0);

\newcommand{\midarrow}{\tikz \draw[-triangle 90] (0,0) -- +(.1,0);}

\begin{scope}[very thick, every node/.style={sloped,allow upside down}]

  \draw (0.75,3.75)-- node {\midarrow} (-0.75,3.75);
  \draw (-.75,3.75)-- node {\midarrow} (0,2.5);
  \draw (0,2.5) -- node {\midarrow} (0.75,3.75);
  
  \draw (0,2.5)-- node {\midarrow} (-1.5,2.5);
  \draw (-1.5,2.5)-- node {\midarrow} (-0.75,1.25);
  \draw (-.75,1.25) -- node {\midarrow} (0,2.5);
    
  \draw (0,2.5)-- node {\midarrow} (.75,1.25);
  \draw (.75,1.25)-- node {\midarrow} (1.5,2.5);
  \draw (1.5,2.5) -- node {\midarrow} (0,2.5);
   
   \draw (-1.5,0)-- node {\midarrow} (-0.75,1.25);
  \draw (-.75,1.25)-- node {\midarrow} (-2.25,1.25);
  \draw (-2.25,1.25) -- node {\midarrow} (-1.5,0);
  
     \draw (1.5,0)-- node {\midarrow} (2.25,1.25);
  \draw (2.25,1.25)-- node {\midarrow} (.75,1.25);
  \draw (.75,1.25) -- node {\midarrow} (1.5,0);

  \draw (-.75,1.25)-- node {\midarrow} (0,0);
 \draw (0,0)-- node {\midarrow} (0.75,1.25);
  \draw (.75,1.25)-- node {\midarrow} (-.75,1.25);

\end{scope}

\end{tikzpicture}
\end{center}
\caption{The inscribed quiver in one ideal triangle ($A_3$-case, i.e. for $Q(\Delta_2)$)\label{Fig:inscribed}}
\end{figure}

There are three visible collections of closed cycles on the inscribed quiver $Q(\Delta_m)$ (see Figures \ref{Fig:inscribed}, \ref{Fig:cycle_types_both} and Figure \ref{Fig:cycle_types_punctures}). 
We will call these `primitive cycles'. Namely, one has

\begin{figure}[ht]
\begin{center}
\begin{tikzpicture}[scale=0.8]
\newcommand{\midarrow}{\tikz \draw[-triangle 90] (0,0) -- +(.1,0);}

\draw[semithick,dashed] (0,0) -- (3,0);
\draw[semithick,dashed] (0,0) -- (-3,0);
\draw[semithick,dashed] (0,0) -- ({3*cos(60)}, {3*sin(60)});
\draw[semithick,dashed] (0,0) -- ({-3*cos 60}, {3*sin 60});
\draw[semithick,dashed] (0,0) -- ({3*cos 60}, {-3*sin 60});
\draw[semithick,dashed] (0,0) -- ({-3*cos 60}, {-3*sin 60});

\draw[semithick,dashed] ({-3*cos 60}, {3*sin 60}) -- ({3*cos 60}, {3*sin 60});
\draw[semithick,dashed] ({-3*cos 60}, {-3*sin 60}) -- ({3*cos 60}, {-3*sin 60});
\draw[semithick,dashed] ({-3*cos 60}, {-3*sin 60}) -- (-3,0);
\draw[semithick,dashed] ({3*cos 60}, {-3*sin 60}) -- (3,0);
\draw[semithick,dashed] ({3*cos 60}, {3*sin 60}) -- (3,0);
\draw[semithick,dashed] ({-3*cos 60}, {3*sin 60}) -- (-3,0);

\begin{scope}[very thick, every node/.style={sloped,allow upside down}]

\draw[semithick,blue] (1,0) -- node {\midarrow} ({1+cos 60}, {sin 60});
\draw[semithick] ({1+cos 60}, {sin 60}) -- node {\midarrow} ({cos 60}, {sin 60});
\draw[semithick,red] ({cos 60}, {sin 60}) -- node {\midarrow} (1,0);

\draw[semithick] (2,0) -- node {\midarrow} ({2+cos 60}, {sin 60});
\draw[semithick] ({2+cos 60}, {sin 60}) -- node {\midarrow} ({1+cos 60}, {sin 60});
\draw[semithick,blue] ({1+cos 60}, {sin 60}) -- node {\midarrow} (2,0);

\draw[semithick] ({1+cos 60}, {sin 60}) -- node {\midarrow} ({1+2*cos 60}, {2*sin 60});
\draw[semithick] ({1+2*cos 60}, {2*sin 60}) -- node {\midarrow} ({2*cos 60}, {2*sin 60});
\draw[semithick] ({2*cos 60}, {2*sin 60}) -- node {\midarrow} ({1+cos 60},{sin 60});

\draw[semithick,red] (-1,0) -- node {\midarrow} ({-1+cos 60}, {sin 60});
\draw[semithick] ({-1+cos 60}, {sin 60}) -- node {\midarrow} ({-2+cos 60}, {sin 60});
\draw[semithick] ({-2+cos 60}, {sin 60}) -- node {\midarrow} (-1,0);

\draw[semithick] (-2,0) -- node {\midarrow} ({-2+cos 60}, {sin 60});
\draw[semithick] ({-2+cos 60}, {sin 60}) -- node {\midarrow} ({-3+cos 60}, {sin 60});
\draw[semithick] ({-3+cos 60}, {sin 60}) -- node {\midarrow} (-2,0);

\draw[semithick] ({-2+cos 60}, {sin 60}) -- node {\midarrow} ({-2+2*cos 60}, {2*sin 60});
\draw[semithick] ({-2+2*cos 60}, {2*sin 60}) -- node {\midarrow} ({-3+2*cos 60}, {2*sin 60});
\draw[semithick] ({-3+2*cos 60}, {2*sin 60}) -- node {\midarrow} ({-2+cos 60},{sin 60});

\draw[semithick,red] ({-cos 60, sin 60}) -- node{\midarrow} ({cos 60}, {sin 60});
\draw[semithick] ({cos 60, sin 60}) -- node{\midarrow} (0, {2*sin 60});
\draw[semithick] (0, {2*sin 60}) -- node{\midarrow} ({-cos 60}, {sin 60});

\draw[semithick] (0,{2*sin 60}) -- node{\midarrow} (1, {2*sin 60});
\draw[semithick] (1, {2*sin 60}) -- node{\midarrow} ({cos 60},{3*sin 60});
\draw[semithick] ({cos 60, 3*sin 60}) -- node{\midarrow} (0, {2*sin 60});

\draw[semithick] (-1,{2*sin 60}) -- node{\midarrow} (0, {2*sin 60});
\draw[semithick] (0, {2*sin 60}) -- node{\midarrow} ({-cos 60},{3*sin 60});
\draw[semithick] ({-cos 60, 3*sin 60}) -- node{\midarrow} (-1, {2*sin 60});

\draw[semithick,red]  ({cos 60, -sin 60}) -- node{\midarrow} ({-cos 60}, {-sin 60});
\draw[semithick] ({-cos 60, -sin 60}) -- node{\midarrow} (0, {-2*sin 60});
\draw[semithick] (0, {-2*sin 60}) -- node{\midarrow} ({cos 60}, {-sin 60});

\draw[semithick] (0,{-2*sin 60}) -- node{\midarrow} (-1, {-2*sin 60});
\draw[semithick] (-1, {-2*sin 60}) -- node{\midarrow} ({-cos 60},{-3*sin 60});
\draw[semithick] ({-cos 60, -3*sin 60}) -- node{\midarrow} (0, {-2*sin 60});

\draw[semithick] (1,{-2*sin 60}) -- node{\midarrow} (0, {-2*sin 60});
\draw[semithick] (0, {-2*sin 60}) -- node{\midarrow} ({cos 60},{-3*sin 60});
\draw[semithick] ({cos 60, -3*sin 60}) -- node{\midarrow} (1, {-2*sin 60});

\draw[semithick,red] (1,0) -- node {\midarrow} ({1-cos 60}, {-sin 60});
\draw[semithick] ({1-cos 60}, {-sin 60}) -- node {\midarrow} ({2-cos 60}, {-sin 60});
\draw[semithick,blue] ({2-cos 60}, {-sin 60}) -- node {\midarrow} (1,0);

\draw[semithick,blue] (2,0) -- node {\midarrow} ({2-cos 60}, {-sin 60});
\draw[semithick] ({2-cos 60}, {-sin 60}) -- node {\midarrow} ({3-cos 60}, {-sin 60});
\draw[semithick] ({3-cos 60}, {-sin 60}) -- node {\midarrow} (2,0);

\draw[semithick] ({2-cos 60}, {-sin 60}) -- node {\midarrow} ({2-2*cos 60}, {-2*sin 60});
\draw[semithick] ({2-2*cos 60}, {-2*sin 60}) -- node {\midarrow} ({3-2*cos 60}, {-2*sin 60});
\draw[semithick] ({3-2*cos 60}, {-2*sin 60}) -- node {\midarrow} ({2-cos 60},{-sin 60});

\draw[semithick] (-1,0) -- node {\midarrow} ({-1-cos 60}, {-sin 60});
\draw[semithick] ({-1-cos 60}, {-sin 60}) -- node {\midarrow} ({-cos 60}, {-sin 60});
\draw[semithick,red] ({-cos 60}, {-sin 60}) -- node {\midarrow} (-1,0);

\draw[semithick] (-2,0) -- node {\midarrow} ({-2-cos 60}, {-sin 60});
\draw[semithick] ({-2-cos 60}, {-sin 60}) -- node {\midarrow} ({-1-cos 60}, {-sin 60});
\draw[semithick] ({-1-cos 60}, {-sin 60}) -- node {\midarrow} (-2,0);

\draw[semithick] ({-1-cos 60}, {-sin 60}) -- node {\midarrow} ({-1-2*cos 60}, {-2*sin 60});
\draw[semithick] ({-1-2*cos 60}, {-2*sin 60}) -- node {\midarrow} ({-2*cos 60}, {-2*sin 60});
\draw[semithick] ({-2*cos 60}, {-2*sin 60}) -- node {\midarrow} ({-1-cos 60},{-sin 60});

\end{scope}

\end{tikzpicture}
\end{center}
\caption{The cycle $L_p^{(1)}$ (red) and a quadrilateral $q_w$ (blue)\label{Fig:cycle_types_both}}
\end{figure}
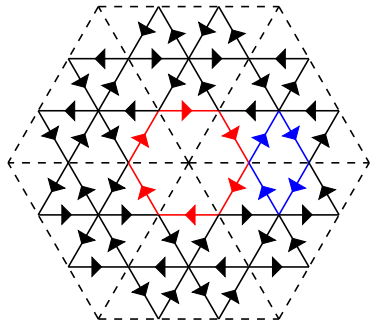

\begin{figure}[ht]
\begin{center}
\begin{tikzpicture}[scale=0.8]
\newcommand{\midarrow}{\tikz \draw[-triangle 90] (0,0) -- +(.1,0);}

\draw[semithick,dashed] (0,0) -- (3,0);
\draw[semithick,dashed] (0,0) -- (-3,0);
\draw[semithick,dashed] (0,0) -- ({3*cos(60)}, {3*sin(60)});
\draw[semithick,dashed] (0,0) -- ({-3*cos 60}, {3*sin 60});
\draw[semithick,dashed] (0,0) -- ({3*cos 60}, {-3*sin 60});
\draw[semithick,dashed] (0,0) -- ({-3*cos 60}, {-3*sin 60});

\draw[semithick,dashed] ({-3*cos 60}, {3*sin 60}) -- ({3*cos 60}, {3*sin 60});
\draw[semithick,dashed] ({-3*cos 60}, {-3*sin 60}) -- ({3*cos 60}, {-3*sin 60});
\draw[semithick,dashed] ({-3*cos 60}, {-3*sin 60}) -- (-3,0);
\draw[semithick,dashed] ({3*cos 60}, {-3*sin 60}) -- (3,0);
\draw[semithick,dashed] ({3*cos 60}, {3*sin 60}) -- (3,0);
\draw[semithick,dashed] ({-3*cos 60}, {3*sin 60}) -- (-3,0);

\begin{scope}[very thick, every node/.style={sloped,allow upside down}]

\draw[semithick] (1,0) -- node {\midarrow} ({1+cos 60}, {sin 60});
\draw[semithick] ({1+cos 60}, {sin 60}) -- node {\midarrow} ({cos 60}, {sin 60});
\draw[semithick,red] ({cos 60}, {sin 60}) -- node {\midarrow} (1,0);

\draw[semithick] (2,0) -- node {\midarrow} ({2+cos 60}, {sin 60});
\draw[semithick] ({2+cos 60}, {sin 60}) -- node {\midarrow} ({1+cos 60}, {sin 60});
\draw[semithick,red] ({1+cos 60}, {sin 60}) -- node {\midarrow} (2,0);

\draw[semithick] ({1+cos 60}, {sin 60}) -- node {\midarrow} ({1+2*cos 60}, {2*sin 60});
\draw[semithick] ({1+2*cos 60}, {2*sin 60}) -- node {\midarrow} ({2*cos 60}, {2*sin 60});
\draw[semithick,red] ({2*cos 60}, {2*sin 60}) -- node {\midarrow} ({1+cos 60},{sin 60});

\draw[semithick,red] (-1,0) -- node {\midarrow} ({-1+cos 60}, {sin 60});
\draw[semithick] ({-1+cos 60}, {sin 60}) -- node {\midarrow} ({-2+cos 60}, {sin 60});
\draw[semithick] ({-2+cos 60}, {sin 60}) -- node {\midarrow} (-1,0);

\draw[semithick,red] (-2,0) -- node {\midarrow} ({-2+cos 60}, {sin 60});
\draw[semithick] ({-2+cos 60}, {sin 60}) -- node {\midarrow} ({-3+cos 60}, {sin 60});
\draw[semithick] ({-3+cos 60}, {sin 60}) -- node {\midarrow} (-2,0);

\draw[semithick,red] ({-2+cos 60}, {sin 60}) -- node {\midarrow} ({-2+2*cos 60}, {2*sin 60});
\draw[semithick] ({-2+2*cos 60}, {2*sin 60}) -- node {\midarrow} ({-3+2*cos 60}, {2*sin 60});
\draw[semithick] ({-3+2*cos 60}, {2*sin 60}) -- node {\midarrow} ({-2+cos 60},{sin 60});

\draw[semithick,red] ({-cos 60, sin 60}) -- node{\midarrow} ({cos 60}, {sin 60});
\draw[semithick] ({cos 60, sin 60}) -- node{\midarrow} (0, {2*sin 60});
\draw[semithick] (0, {2*sin 60}) -- node{\midarrow} ({-cos 60}, {sin 60});

\draw[semithick,red] (0,{2*sin 60}) -- node{\midarrow} (1, {2*sin 60});
\draw[semithick] (1, {2*sin 60}) -- node{\midarrow} ({cos 60},{3*sin 60});
\draw[semithick] ({cos 60, 3*sin 60}) -- node{\midarrow} (0, {2*sin 60});

\draw[semithick,red] (-1,{2*sin 60}) -- node{\midarrow} (0, {2*sin 60});
\draw[semithick] (0, {2*sin 60}) -- node{\midarrow} ({-cos 60},{3*sin 60});
\draw[semithick] ({-cos 60, 3*sin 60}) -- node{\midarrow} (-1, {2*sin 60});

\draw[semithick,red]  ({cos 60, -sin 60}) -- node{\midarrow} ({-cos 60}, {-sin 60});
\draw[semithick] ({-cos 60, -sin 60}) -- node{\midarrow} (0, {-2*sin 60});
\draw[semithick] (0, {-2*sin 60}) -- node{\midarrow} ({cos 60}, {-sin 60});

\draw[semithick,red] (0,{-2*sin 60}) -- node{\midarrow} (-1, {-2*sin 60});
\draw[semithick] (-1, {-2*sin 60}) -- node{\midarrow} ({-cos 60},{-3*sin 60});
\draw[semithick] ({-cos 60, -3*sin 60}) -- node{\midarrow} (0, {-2*sin 60});

\draw[semithick,red] (1,{-2*sin 60}) -- node{\midarrow} (0, {-2*sin 60});
\draw[semithick] (0, {-2*sin 60}) -- node{\midarrow} ({cos 60},{-3*sin 60});
\draw[semithick] ({cos 60, -3*sin 60}) -- node{\midarrow} (1, {-2*sin 60});

\draw[semithick,red] (1,0) -- node {\midarrow} ({1-cos 60}, {-sin 60});
\draw[semithick] ({1-cos 60}, {-sin 60}) -- node {\midarrow} ({2-cos 60}, {-sin 60});
\draw[semithick] ({2-cos 60}, {-sin 60}) -- node {\midarrow} (1,0);

\draw[semithick,red] (2,0) -- node {\midarrow} ({2-cos 60}, {-sin 60});
\draw[semithick] ({2-cos 60}, {-sin 60}) -- node {\midarrow} ({3-cos 60}, {-sin 60});
\draw[semithick] ({3-cos 60}, {-sin 60}) -- node {\midarrow} (2,0);

\draw[semithick,red] ({2-cos 60}, {-sin 60}) -- node {\midarrow} ({2-2*cos 60}, {-2*sin 60});
\draw[semithick] ({2-2*cos 60}, {-2*sin 60}) -- node {\midarrow} ({3-2*cos 60}, {-2*sin 60});
\draw[semithick] ({3-2*cos 60}, {-2*sin 60}) -- node {\midarrow} ({2-cos 60},{-sin 60});

\draw[semithick] (-1,0) -- node {\midarrow} ({-1-cos 60}, {-sin 60});
\draw[semithick] ({-1-cos 60}, {-sin 60}) -- node {\midarrow} ({-cos 60}, {-sin 60});
\draw[semithick,red] ({-cos 60}, {-sin 60}) -- node {\midarrow} (-1,0);

\draw[semithick] (-2,0) -- node {\midarrow} ({-2-cos 60}, {-sin 60});
\draw[semithick] ({-2-cos 60}, {-sin 60}) -- node {\midarrow} ({-1-cos 60}, {-sin 60});
\draw[semithick,red] ({-1-cos 60}, {-sin 60}) -- node {\midarrow} (-2,0);

\draw[semithick] ({-1-cos 60}, {-sin 60}) -- node {\midarrow} ({-1-2*cos 60}, {-2*sin 60});
\draw[semithick] ({-1-2*cos 60}, {-2*sin 60}) -- node {\midarrow} ({-2*cos 60}, {-2*sin 60});
\draw[semithick,red] ({-2*cos 60}, {-2*sin 60}) -- node {\midarrow} ({-1-cos 60},{-sin 60});

\end{scope}

\end{tikzpicture}
\end{center}
\caption{The cycles $L_p^{(1)}$ and $L_p^{(2)}$ (red)\label{Fig:cycle_types_punctures}}
\end{figure}
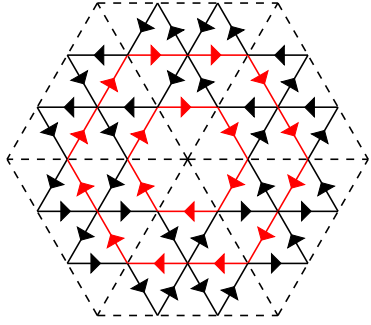
\begin{itemize}
\item anticlockwise-oriented 3-cycles $\{t_b\}$, each the boundary of a single  inscribed black triangle $b$;
\item clockwise-oriented 3- and 4-cycles $\{q_w\}$, boundaries of the white regions  $w$ which are those complementary regions on $\bS$ to the black triangles which do not contain a point $p\in \bP$; see Figure \ref{Fig:cycle_types_both} for a four-cycle $q_w$;
\item  for each point $p\in \bP$, which has valence $k$ as a vertex of $\Delta$, larger clockwise-oriented $k$-cycles $L_p^{(j)}$ for $1\leq j \leq m$; see Figure \ref{Fig:cycle_types_punctures}. 
\end{itemize}
When $m=1$, the middle class is not present
; when $m=2$, there are only quadrilaterals in the middle class, and no 3-cycles.  

\begin{rmk} The primitive cycles are exactly the chordless cycles for $Q(\Delta_m)$. \end{rmk}

The decomposition of $\bS$ into the black and white regions of the quiver and its complement amounts to giving a bipartite graph on $\bS$, and leads to a `canonical' potential $Q(\Delta_m)$, originating in the string theory community \cite{Franco-Hanany-etal} and emphasised in this setting by Goncharov in \cite{Goncharov}.  We will write $N$ for the total number of primitive cycles, so $N = [(4g-4+2d)m(m+1)/2] +[(6g-6+3d)(m-2) + (4g-4+2d)(m-2)(m-1)/2]+ [dm]$ for the numbers of $t_b, q_w, L_p^{(j)}$ respectively. 

\begin{defn}
For a vector ${\bf c} \in (\bK^*)^N$ of pointwise non-zero coefficients, we will write $W_{\bf c}(\Delta_m) = \sum c_b \cdot t_b + \sum c_w \cdot q_w + \sum_{p,j} c_p^{(j)} L_p^{(j)}$. 
\end{defn}

 In the $m=1$ case, and assuming $|\bP| > 1$, \cite{GLFS} show that every generic potential is right-equivalent to a generic primitive potential, i.e. one with zero non-primitive part.  This fails when $m>1$: then  the moduli space of $A_{\infty}$-structures on the cohomological category underlying $\scrC(Q(\Delta_m))$ has positive dimension.  

\begin{rmk}\label{rmk:abrikosov} By  diagonal automorphisms, any generic potential can be related to a `normalised' one in which the coefficients of all primitive cycles other than the $L_p^{(j)}$ are equal to $1$. If $m=2$, a normalised generic potential $W$ is \emph{strongly generic} if,  for each of the points $p\in \bP$, the (necessarily non-zero) coefficients $c_p^{(1)}$ and $c_p^{(2)}$ of $L_p^{(1)}$ and $L_p^{(2)}$ in $W$ satisfy the non-degeneracy condition
\begin{equation} \label{eqn:strongly_generic}
c_p^{(1)} + (-1)^{\mathrm{valence}(p)} c_p^{(2)} \neq 0.
\end{equation}
Right-equivalence preserves generic potentials, and the subset of those generic potentials which when normalised satisfy strong genericity.  The space of strongly generic potentials with fixed primitive part, up to right equivalence, is isomorphic to  $\bA^1_{\bK}$, cf.  \cite[Theorem 2]{Abrikosov} and \cite[Proposition 5.14]{Abrikosov}. When $m>2$, there is no explicit description of the generic fibre of \eqref{eqn:project_potential}. \end{rmk}

Any two ideal triangulations $\Delta$ and $\Delta'$ of $(\bS,\bP)$  (without self-folded triangles) can be related by a sequence of flips. Goncharov \cite{Goncharov} showed that the effect of a flip on the pair $(Q(\Delta_m), W(\Delta_m))$ could itself be effected by a sequence of $m(m+1)(m+2)/6$  mutations,  and that the mutation of the canonical potential is right-equivalent to the canonical potential on the mutated quiver. Mutations induce auto-equivalences of the associated $CY_3$-categories \cite{Keller-Yang}.  It follows that there is a well-defined $CY_3$-category $\scrD(\bS,\bP,m)$, quasi-isomorphic to the derived category of $\scrC(Q(\Delta_m), W(\Delta_m))$ for any choice of ideal triangulation $\Delta$ of $(\bS,\bP)$.  More generally, the family of $CY_3$-categories associated to all possible generic potentials can be realised by generic potentials on a fixed quiver $Q(\Delta_m)$.

\section{Quiver 3-folds and  Lagrangian sphere configurations\label{Sec:quiver_3folds}}

\subsection{$A_m$-fibred 3-folds} 
In this section we discuss the symplectic topology of the threefolds $Y(\bS,\bP,m)$. These threefolds were introduced in \cite{Abrikosov}, and are associated to tuples of meromorphic differentials $(\phi_2,\ldots,\phi_{m+1})$ on a Riemann surface $S$ underlying $\bS$ with poles at a subset $D\subset S$ of cardinality $|\bP|$; thus $\phi_j \in H^0(K_S(D)^{\otimes j})$. The threefolds associated to tuples of holomorphic differentials were previously introduced in \cite{DDP}, and those associated to meromorphic quadratic differentials in \cite{Smith:quiver}. 

Fix a Riemann surface $S$ of genus $g$, and a section $\delta \in H^0(\mathcal{O}_S(D))$ which vanishes to order 1 at a divisor $D$ of degree $d$ (we think of $D$ as lying at the points of $\bP \subset \bS$, where $\bS$ is the topological surface underlying the Riemann surface $S$). Note that $\delta$ is unique up to scale. We also fix a decomposition of the log canonical bundle
\begin{equation} \label{eqn:log_canonical}
K_S(D) = \calL_1 \otimes \calL_2.
\end{equation} 
We consider the rank 3 vector bundle
\begin{equation} \label{eqn:ambient}
\mathcal{W} = \calL_1^{\otimes(m+1)}(-D) \oplus \calL_1 \calL_2 \oplus \calL_2^{\otimes(m+1)}
\end{equation}
over $S$. Given a tuple 
\[
\Phi = (\phi_2,\ldots,\phi_{m+1}) \quad \mathrm{with}  \ \phi_j \in H^0(K_S(D)^{\otimes j})
\]
we consider the hypersurface
\[
Y_{\Phi} = \left\{ (a,b,c) \in \mathcal{W} \, \big| \, (\delta\cdot a)\cdot c = b^{m+1} - \sum_j b^{m+1-j} \cdot \phi_j \right\}
\]
Here $(a,b,c)$ are written with respect to the decomposition \eqref{eqn:ambient} of the rank 3 bundle $\mathcal{W}$. 
The terms $(\delta\cdot a)c$ and $b^{m+1} - \sum_j b^{m+1-j} \cdot \phi_j $ both belong to $K_S(D)^{\otimes(m+1)}$, so the defining equation makes sense.  (We ask that the sum of the roots of $\Phi(b) = 0$ vanishes for compatibility and comparison with \cite{GMN:snakes}.)

\begin{lem} $Y_{\Phi}$ has vanishing canonical class, so is a quasi-projective Calabi-Yau variety.
\end{lem}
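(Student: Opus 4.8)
The plan is to realise $Y_{\Phi}$ as a hypersurface inside the smooth total space $E := \mathrm{Tot}(\mathcal{W})$ of the rank-$3$ bundle \eqref{eqn:ambient} and to apply adjunction, $K_{Y_{\Phi}} = \bigl(K_E \otimes \mathcal{O}_E(Y_{\Phi})\bigr)\big|_{Y_{\Phi}}$, which holds because $Y_{\Phi}$ is a Cartier divisor in the smooth variety $E$ (if $Y_{\Phi}$ happens to be singular one reads $K_{Y_{\Phi}}$ as its dualizing sheaf, and adjunction still applies). Everything then reduces to two line-bundle computations on $E$, which I expect to cancel exactly.

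First I would compute $K_E$. Writing $\pi\colon E\to S$ for the projection, the relative tangent bundle of a vector-bundle total space is $T_{E/S}=\pi^{*}\mathcal{W}$, so the relative tangent sequence gives $K_E=\pi^{*}\!\bigl(K_S\otimes\det\mathcal{W}^{\vee}\bigr)$. Using the decomposition \eqref{eqn:log_canonical}, namely $\calL_1\otimes\calL_2=K_S(D)$, the determinant of \eqref{eqn:ambient} is $\det\mathcal{W}=\calL_1^{\otimes(m+2)}\otimes\calL_2^{\otimes(m+2)}\otimes\mathcal{O}_S(-D)=K_S(D)^{\otimes(m+2)}(-D)$. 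Setting $L:=K_S(D)^{\otimes(m+1)}$ and using $K_S(D)(-D)=K_S$, this reads $\det\mathcal{W}=K_S\otimes L$, whence $K_S\otimes\det\mathcal{W}^{\vee}=L^{\vee}$ and therefore $K_E=\pi^{*}L^{\vee}$.

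Second I would identify the line bundle cutting out $Y_{\Phi}$. The fibre coordinates $a,b,c$ are tautological sections of $\pi^{*}\calL_1^{\otimes(m+1)}(-D)$, $\pi^{*}K_S(D)$ and $\pi^{*}\calL_2^{\otimes(m+1)}$ respectively; together with $\pi^{*}\delta\in H^{0}(\pi^{*}\mathcal{O}_S(D))$ and $\pi^{*}\phi_j\in H^{0}(\pi^{*}K_S(D)^{\otimes j})$, the observation in the text that both $(\delta\cdot a)c$ and $b^{m+1}-\sum_j b^{m+1-j}\phi_j$ take values in $K_S(D)^{\otimes(m+1)}$ says exactly that the defining equation is a section of $\pi^{*}L$, so $\mathcal{O}_E(Y_{\Phi})=\pi^{*}L$. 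Feeding the two computations into adjunction, $K_{Y_{\Phi}}=\pi^{*}\!\bigl(L^{\vee}\otimes L\bigr)\big|_{Y_{\Phi}}=\mathcal{O}_{Y_{\Phi}}$, which is the desired triviality; quasi-projectivity of $Y_{\Phi}$ is then inherited from $E$, which is quasi-projective as the total space of a bundle over the projective curve $S$.

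The calculation is essentially forced: the bundle \eqref{eqn:ambient} is engineered so that $Y_{\Phi}$ is an anticanonical divisor, $Y_{\Phi}\in\lvert -K_E\rvert$, which is precisely the Calabi–Yau condition for a hypersurface in a smooth variety. I therefore do not expect a genuine obstacle; the only points demanding care are (i) getting $K_E$ right, in particular tracking the lone $\mathcal{O}_S(-D)$ twist in $\mathcal{W}$ so that it cancels against the twist hidden inside $K_S(D)$ via $K_S(D)(-D)=K_S$, and (ii) checking that $Y_{\Phi}$ is a bona fide hypersurface rather than all of $E$, which holds because the monomial $b^{m+1}$ forces the defining section to be nonzero, so that adjunction genuinely applies.
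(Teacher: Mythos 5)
Your argument is correct: the determinant computation $\det\mathcal{W}=K_S\otimes K_S(D)^{\otimes(m+1)}$, the identification $\mathcal{O}_E(Y_{\Phi})=\pi^{*}K_S(D)^{\otimes(m+1)}=K_E^{\vee}$, and the adjunction step all check out, and the engineering of the $\mathcal{O}_S(-D)$ twist in \eqref{eqn:ambient} against the $\delta$ in the defining equation is exactly the point. The paper itself gives no argument here, deferring entirely to \cite[Section 6]{Abrikosov}, and your computation is the standard one that reference carries out, so there is nothing to flag.
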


\begin{proof} See \cite[Section 6]{Abrikosov}. \end{proof}

The \emph{spectral curve} $\Sigma \subset \mathrm{Tot}(K_S(D))$ is the vanishing locus $\{b \, | \, \Phi(b) = 0\}$. We say that $\Phi$ is generic when $\Sigma$ is smooth and projection $\Sigma \to S$ is a simple branched covering; it then has covering degree $m+1$ and $m(m+1)(2g-2+d)$ simple branch points, arising from the zeroes of $\det(\Phi)$.  The threefold is almost a conic $\bC^*$-bundle over $\mathrm{Tot}(K_S(D))$ with singular fibres $\bC\vee\bC$ along the spectral curve $\Sigma$: this is precisely true after a finite number of affine modifications of the bundle $K_S(D)$ at the fibres over points of $D$, see \cite[Section 6.3]{Abrikosov}.

We will work with K\"ahler forms on $Y_{\Phi}$ which are small perturbations of those induced from a choice of K\"ahler form on $S$ and on the total space of $\mathcal{W} \to S$; in particular our K\"ahler forms tame integrable complex structures for which the projection $Y_{\Phi} \to S$ is holomorphic.  Because the defining equation for $Y_{\Phi}$ is weighted homogeneous, parallel transport vector fields have polynomial growth on the fibres with respect to a K\"ahler metric on $Y_{\Phi}$  induced from a metric on the vector bundle $\mathcal{W}$, and there are globally defined parallel transport maps of the fibres of $Y_{\Phi}$ over paths in $S$.  Furthermore, there are parallel transport maps defined on compact subsets of $Y_{\Phi}$ over compact subsets in the universal family of threefolds obtained by varying $\Phi$.   (Neither the monodromy of $Y_{\Phi}\to S$, nor of the universal family, is naturally compactly supported; in the former case this is because if one compactifies the fibration vertically, the divisor at infinity is not locally trivial but degenerates over $D$.)

Given an ideal triangulation of $\bS$ with vertices at $\bP$, and its inscribed quiver, we pass to the dual graph of the quiver, as in Figures \ref{Fig:a3_lag_cellulation} and \ref{Fig:dual_lagrangians}.

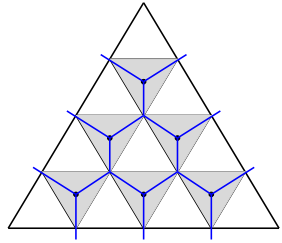
\begin{figure}[ht]
\begin{center}
\begin{tikzpicture}[scale=0.6]

\draw[semithick] (-3,0) -- (3,0);
\draw[semithick] (-3,0) -- (0,5);
\draw[semithick] (0,5) -- (3,0);

\draw[semithick] (-1.5,0) -- (-3 + 0.75, 1.25);
\draw[semithick] (0,0) -- (-3+1.5, 2.5);
\draw[semithick] (1.5,0) -- (-3+2.25, 3.75);

\draw[semithick] (-1.5,0) -- (3-2.25,3.75);
\draw[semithick] (0,0) -- (3-1.5,2.5);
\draw[semithick] (1.5,0) -- (3-0.75,1.25);

\draw[semithick] (-2.25,1.25) -- (2.25,1.25);
\draw[semithick] (-1.5,2.5) -- (1.5,2.5);
\draw[semithick] (-0.75,3.75) -- (0.75,3.75);

\draw[fill,gray!30] (-0.75,3.75) -- (0.75,3.75) -- (0, 2.5); 
\draw[fill,gray!30] (-1.5,2.5) -- (0,2.5) -- (-0.75, 1.25); 
\draw[fill,gray!30] (0,2.5)--(1.5,2.5) -- (0.75,1.25);
\draw[fill,gray!30] (-2.25,1.25) -- (-0.75,1.25) -- (-1.5,0);
\draw[fill,gray!30] (-0.75,1.25) -- (0.75,1.25) -- (0,0);
\draw[fill,gray!30] (0.75,1.25) -- (2.25,1.25) -- (1.5,0);

\draw[fill] (0,3/4) circle (0.05);
\draw[fill] (0, 13/4) circle (0.05);
\draw[fill] (-1.5, 3/4) circle (0.05);
\draw[fill] (1.5,3/4) circle (0.05);
\draw[fill] (-.75, 2) circle (0.05);
\draw[fill] (.75,2) circle (0.05);

\draw[semithick,blue] (0,3/4)--(-0.75,1.25);
\draw[semithick,blue] (0,3/4) -- (0.75,1.25);
\draw[semithick,blue] (0,3/4) -- (0, -1/4);
\draw[semithick,blue] (-1.5,3/4) -- (-1.5,-1/4);
\draw[semithick,blue] (-1.5,3/4) -- (-0.75,1.25);
\draw[semithick,blue] (-1.5,3/4) -- (-2.25-1/5,1.25+1/10);
\draw[semithick,blue] (1.5,3/4) -- (1.5,-1/4);
\draw[semithick,blue] (1.5,3/4) -- (0.75,1.25);
\draw[semithick,blue] (1.5,3/4) -- (2.25+1/5,1.25+1/10);
\draw[semithick,blue] (-0.75,2) -- (0,2.5);
\draw[semithick,blue] (-0.75,2) -- (-0.75,1.25);
\draw[semithick,blue] (-0.75,2) -- (-1.5-1/5,2.5+1/10);
\draw[semithick,blue] (0.75,2) -- (0,2.5);
\draw[semithick,blue] (0.75,2) -- (0.75,1.25);
\draw[semithick,blue] (0.75,2) -- (1.5+1/5,2.5+1/10);
\draw[semithick,blue] (0,13/4) -- (0,2.5);
\draw[semithick,blue] (0,13/4) -- (-0.75-1/5,3.75+1/10);
\draw[semithick,blue] (0,13/4) -- (0.75+1/5,3.75+1/10);

\end{tikzpicture}
\end{center}
\caption{The dual Lagrangian cellulation in one ideal triangle for $\Delta_3$\label{Fig:a3_lag_cellulation}}
\end{figure}

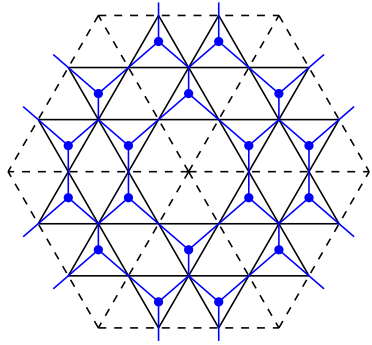
\begin{figure}[ht]
\begin{center}
\begin{tikzpicture}[scale=0.8]
\newcommand{\midarrow}{}

\draw[semithick,dashed] (0,0) -- (3,0);
\draw[semithick,dashed] (0,0) -- (-3,0);
\draw[semithick,dashed] (0,0) -- ({3*cos(60)}, {3*sin(60)});
\draw[semithick,dashed] (0,0) -- ({-3*cos 60}, {3*sin 60});
\draw[semithick,dashed] (0,0) -- ({3*cos 60}, {-3*sin 60});
\draw[semithick,dashed] (0,0) -- ({-3*cos 60}, {-3*sin 60});

\draw[semithick,dashed] ({-3*cos 60}, {3*sin 60}) -- ({3*cos 60}, {3*sin 60});
\draw[semithick,dashed] ({-3*cos 60}, {-3*sin 60}) -- ({3*cos 60}, {-3*sin 60});
\draw[semithick,dashed] ({-3*cos 60}, {-3*sin 60}) -- (-3,0);
\draw[semithick,dashed] ({3*cos 60}, {-3*sin 60}) -- (3,0);
\draw[semithick,dashed] ({3*cos 60}, {3*sin 60}) -- (3,0);
\draw[semithick,dashed] ({-3*cos 60}, {3*sin 60}) -- (-3,0);

\begin{scope}[very thick, every node/.style={sloped,allow upside down}]

\draw[semithick] (1,0) -- node {\midarrow} ({1+cos 60}, {sin 60});
\draw[semithick] ({1+cos 60}, {sin 60}) -- node {\midarrow} ({cos 60}, {sin 60});
\draw[semithick] ({cos 60}, {sin 60}) -- node {\midarrow} (1,0);

\draw[semithick] (2,0) -- node {\midarrow} ({2+cos 60}, {sin 60});
\draw[semithick] ({2+cos 60}, {sin 60}) -- node {\midarrow} ({1+cos 60}, {sin 60});
\draw[semithick] ({1+cos 60}, {sin 60}) -- node {\midarrow} (2,0);

\draw[semithick] ({1+cos 60}, {sin 60}) -- node {\midarrow} ({1+2*cos 60}, {2*sin 60});
\draw[semithick] ({1+2*cos 60}, {2*sin 60}) -- node {\midarrow} ({2*cos 60}, {2*sin 60});
\draw[semithick] ({2*cos 60}, {2*sin 60}) -- node {\midarrow} ({1+cos 60},{sin 60});

\draw[semithick] (-1,0) -- node {\midarrow} ({-1+cos 60}, {sin 60});
\draw[semithick] ({-1+cos 60}, {sin 60}) -- node {\midarrow} ({-2+cos 60}, {sin 60});
\draw[semithick] ({-2+cos 60}, {sin 60}) -- node {\midarrow} (-1,0);

\draw[semithick] (-2,0) -- node {\midarrow} ({-2+cos 60}, {sin 60});
\draw[semithick] ({-2+cos 60}, {sin 60}) -- node {\midarrow} ({-3+cos 60}, {sin 60});
\draw[semithick] ({-3+cos 60}, {sin 60}) -- node {\midarrow} (-2,0);

\draw[semithick] ({-2+cos 60}, {sin 60}) -- node {\midarrow} ({-2+2*cos 60}, {2*sin 60});
\draw[semithick] ({-2+2*cos 60}, {2*sin 60}) -- node {\midarrow} ({-3+2*cos 60}, {2*sin 60});
\draw[semithick] ({-3+2*cos 60}, {2*sin 60}) -- node {\midarrow} ({-2+cos 60},{sin 60});

\draw[semithick] ({-cos 60, sin 60}) -- node{\midarrow} ({cos 60}, {sin 60});
\draw[semithick] ({cos 60, sin 60}) -- node{\midarrow} (0, {2*sin 60});
\draw[semithick] (0, {2*sin 60}) -- node{\midarrow} ({-cos 60}, {sin 60});

\draw[semithick] (0,{2*sin 60}) -- node{\midarrow} (1, {2*sin 60});
\draw[semithick] (1, {2*sin 60}) -- node{\midarrow} ({cos 60},{3*sin 60});
\draw[semithick] ({cos 60, 3*sin 60}) -- node{\midarrow} (0, {2*sin 60});

\draw[semithick] (-1,{2*sin 60}) -- node{\midarrow} (0, {2*sin 60});
\draw[semithick] (0, {2*sin 60}) -- node{\midarrow} ({-cos 60},{3*sin 60});
\draw[semithick] ({-cos 60, 3*sin 60}) -- node{\midarrow} (-1, {2*sin 60});

\draw[semithick]  ({cos 60, -sin 60}) -- node{\midarrow} ({-cos 60}, {-sin 60});
\draw[semithick] ({-cos 60, -sin 60}) -- node{\midarrow} (0, {-2*sin 60});
\draw[semithick] (0, {-2*sin 60}) -- node{\midarrow} ({cos 60}, {-sin 60});

\draw[semithick] (0,{-2*sin 60}) -- node{\midarrow} (-1, {-2*sin 60});
\draw[semithick] (-1, {-2*sin 60}) -- node{\midarrow} ({-cos 60},{-3*sin 60});
\draw[semithick] ({-cos 60, -3*sin 60}) -- node{\midarrow} (0, {-2*sin 60});

\draw[semithick] (1,{-2*sin 60}) -- node{\midarrow} (0, {-2*sin 60});
\draw[semithick] (0, {-2*sin 60}) -- node{\midarrow} ({cos 60},{-3*sin 60});
\draw[semithick] ({cos 60, -3*sin 60}) -- node{\midarrow} (1, {-2*sin 60});

\draw[semithick] (1,0) -- node {\midarrow} ({1-cos 60}, {-sin 60});
\draw[semithick] ({1-cos 60}, {-sin 60}) -- node {\midarrow} ({2-cos 60}, {-sin 60});
\draw[semithick] ({2-cos 60}, {-sin 60}) -- node {\midarrow} (1,0);

\draw[semithick] (2,0) -- node {\midarrow} ({2-cos 60}, {-sin 60});
\draw[semithick] ({2-cos 60}, {-sin 60}) -- node {\midarrow} ({3-cos 60}, {-sin 60});
\draw[semithick] ({3-cos 60}, {-sin 60}) -- node {\midarrow} (2,0);

\draw[semithick] ({2-cos 60}, {-sin 60}) -- node {\midarrow} ({2-2*cos 60}, {-2*sin 60});
\draw[semithick] ({2-2*cos 60}, {-2*sin 60}) -- node {\midarrow} ({3-2*cos 60}, {-2*sin 60});
\draw[semithick] ({3-2*cos 60}, {-2*sin 60}) -- node {\midarrow} ({2-cos 60},{-sin 60});

\draw[semithick] (-1,0) -- node {\midarrow} ({-1-cos 60}, {-sin 60});
\draw[semithick] ({-1-cos 60}, {-sin 60}) -- node {\midarrow} ({-cos 60}, {-sin 60});
\draw[semithick] ({-cos 60}, {-sin 60}) -- node {\midarrow} (-1,0);

\draw[semithick] (-2,0) -- node {\midarrow} ({-2-cos 60}, {-sin 60});
\draw[semithick] ({-2-cos 60}, {-sin 60}) -- node {\midarrow} ({-1-cos 60}, {-sin 60});
\draw[semithick] ({-1-cos 60}, {-sin 60}) -- node {\midarrow} (-2,0);

\draw[semithick] ({-1-cos 60}, {-sin 60}) -- node {\midarrow} ({-1-2*cos 60}, {-2*sin 60});
\draw[semithick] ({-1-2*cos 60}, {-2*sin 60}) -- node {\midarrow} ({-2*cos 60}, {-2*sin 60});
\draw[semithick] ({-2*cos 60}, {-2*sin 60}) -- node {\midarrow} ({-1-cos 60},{-sin 60});

\draw[fill,blue] (0,{1.5*sin 60}) circle (0.05);
\draw[fill, blue] ({cos 60, 2.5*sin 60}) circle (0.05);
\draw[fill, blue] ({-cos 60, 2.5*sin 60}) circle (0.05);
\draw[semithick, blue] (0,{1.5*sin 60}) -- (0,{2*sin 60});
\draw[semithick, blue] ({cos 60,2.5*sin 60}) -- (0,{2*sin 60});
\draw[semithick, blue] ({-cos 60,2.5*sin 60}) -- (0,{2*sin 60});

\draw[fill,blue] (1,{0.5*sin 60}) circle (0.05);
\draw[fill,blue] (2,{0.5*sin 60}) circle (0.05);
\draw[fill,blue] (1.5,{1.5*sin 60}) circle (0.05);

\draw[semithick, blue] (1,{0.5*sin 60}) -- (1.5,{sin 60});
\draw[semithick, blue] ({2, 0.5*sin 60}) -- (1.5,{sin 60});
\draw[semithick, blue] ({1.5,1.5*sin 60}) -- (1.5,{sin 60});

\draw[fill,blue] (-1,{0.5*sin 60}) circle (0.05);
\draw[fill,blue] (-2,{0.5*sin 60}) circle (0.05);
\draw[fill,blue] (-1.5,{1.5*sin 60}) circle (0.05);

\draw[semithick, blue] (-1,{0.5*sin 60}) -- (-1.5,{sin 60});
\draw[semithick, blue] ({-2, 0.5*sin 60}) -- (-1.5,{sin 60});
\draw[semithick, blue] ({-1.5,1.5*sin 60}) -- (-1.5,{sin 60});

\draw[fill,blue] (0,{-1.5*sin 60}) circle (0.05);
\draw[fill, blue] ({cos 60, -2.5*sin 60}) circle (0.05);
\draw[fill, blue] ({-cos 60, -2.5*sin 60}) circle (0.05);
\draw[semithick, blue] (0,{-1.5*sin 60}) -- (0,{-2*sin 60});
\draw[semithick, blue] ({cos 60,-2.5*sin 60}) -- (0,{-2*sin 60});
\draw[semithick, blue] ({-cos 60,-2.5*sin 60}) -- (0,{-2*sin 60});

\draw[fill,blue] (-1,{-0.5*sin 60}) circle (0.05);
\draw[fill,blue] (-2,{-0.5*sin 60}) circle (0.05);
\draw[fill,blue] (-1.5,{-1.5*sin 60}) circle (0.05);

\draw[semithick, blue] (-1,{-0.5*sin 60}) -- (-1.5,{-sin 60});
\draw[semithick, blue] ({-2, -0.5*sin 60}) -- (-1.5,{-sin 60});
\draw[semithick, blue] ({-1.5,-1.5*sin 60}) -- (-1.5,{-sin 60});

\draw[fill,blue] (1,{-0.5*sin 60}) circle (0.05);
\draw[fill,blue] (2,{-0.5*sin 60}) circle (0.05);
\draw[fill,blue] (1.5,{-1.5*sin 60}) circle (0.05);

\draw[semithick, blue] (1,{-0.5*sin 60}) -- (1.5,{-sin 60});
\draw[semithick, blue] ({2, -0.5*sin 60}) -- (1.5,{-sin 60});
\draw[semithick, blue] ({1.5,-1.5*sin 60}) -- (1.5,{-sin 60});

\draw[semithick,blue] (0,{1.5*sin 60}) -- (1, {0.5*sin 60});
\draw[semithick,blue] (1,{0.5*sin 60}) -- (1, {-0.5*sin 60});
\draw[semithick,blue] (1,{-0.5*sin 60}) -- (0, {-1.5*sin 60});
\draw[semithick,blue] (0,{-1.5*sin 60}) -- (-1, {-.5*sin 60});
\draw[semithick,blue] (-1,{-0.5*sin 60}) -- (-1, {.5*sin 60});
\draw[semithick,blue] (-1,{0.5*sin 60}) -- (0, {1.5*sin 60});

\draw[semithick,blue] (0.5, {2.5*sin 60}) -- (1.5, {1.5*sin 60});
\draw[semithick,blue] (2, {.5*sin 60}) -- (2, {-.5*sin 60});
\draw[semithick,blue] (1.5, {-1.5*sin 60}) -- (0.5, {-2.5*sin 60});

\draw[semithick,blue] (-0.5, {2.5*sin 60}) -- (-1.5, {1.5*sin 60});
\draw[semithick,blue] (-2, {.5*sin 60}) -- (-2, {-.5*sin 60});
\draw[semithick,blue] (-1.5, {-1.5*sin 60}) -- (-0.5, {-2.5*sin 60});

\draw[semithick,blue] (0.5,{2.5*sin 60}) -- (0.5,{3.25*sin 60});
\draw[semithick,blue] (-0.5,{2.5*sin 60}) -- (-0.5,{3.25*sin 60});
\draw[semithick,blue] (0.5,{-2.5*sin 60}) -- (0.5,{-3.25*sin 60});
\draw[semithick,blue] (-0.5,{-2.5*sin 60}) -- (-0.5,{-3.25*sin 60});
\draw[semithick,blue] (1.5,{1.5*sin 60}) -- ({2.25, 2.25*sin 60});
\draw[semithick,blue] (-1.5,{1.5*sin 60}) -- ({-2.25, 2.25*sin 60});
\draw[semithick,blue] (2,{0.5*sin 60}) -- ({2.75, 1.25*sin 60});
\draw[semithick,blue] (-2,{0.5*sin 60}) -- ({-2.75, 1.25*sin 60});

\draw[semithick,blue] (1.5,{-1.5*sin 60}) -- ({2.25, -2.25*sin 60});
\draw[semithick,blue] (-1.5,{-1.5*sin 60}) -- ({-2.25, -2.25*sin 60});
\draw[semithick,blue] (2,{-0.5*sin 60}) -- ({2.75, -1.25*sin 60});
\draw[semithick,blue] (-2,{-0.5*sin 60}) -- ({-2.75, -1.25*sin 60});

\end{scope}

\end{tikzpicture}
\end{center}
\caption{The dual Lagrangian cellulation in a hexagon of ideal triangles for $\Delta_2$\label{Fig:dual_lagrangians}}
\end{figure}

The vertices of this dual `Lagrangian' cellulation $\Delta_m^{\vee}$ are at the centres of the inscribed black triangles of $\Delta_m$;  there are $m(m+1)/2$ vertices of $\Delta_m^{\vee}$ in each ideal triangle of $\Delta$.  Thus, the total number of vertices of the Lagrangian cellulation is
\[
m(m+1)/2 \cdot (4g-4+2d) = m(m+1)(2g-2+d)
\]
which co-incides with the number of branch points of $\Sigma \to S$, cf. Remark \ref{rmk:numerics}.

\begin{lem}\label{lem:delta_m_controls_singular_fibres}
Given an ideal triangulation $\Delta$ of $(\bS,\bP)$, there is a tuple $\Phi$ such that, up to isotopy, the associated fibration $Y_{\Phi} \to S$ has reducible fibres at the points of $\bP$, and has Lefschetz singular fibres over the vertices of $\Delta_m^{\vee}$.
\end{lem}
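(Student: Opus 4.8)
The plan is to match the singular-fibre structure of $Y_\Phi \to S$ with the combinatorics of $\Delta_m$ through the spectral cover $\Sigma$. Recall from the conic-bundle description (\cite[Section 6.3]{Abrikosov}) that, away from $D$, $Y_\Phi$ is a conic bundle over $\mathrm{Tot}(K_S(D))$ degenerating to $\bC\vee\bC$ along $\Sigma = \{\Phi(b)=0\}$. Over $s \in S$, the fibre of $Y_\Phi \to S$ is therefore the conic bundle over the $b$-line $K_S(D)|_s \cong \bC$ with nodal fibres over the $m+1$ roots $\lambda_i(s)$ of $\Phi_s(b)=0$; when these are distinct this is a smooth $A_m$-surface, and as $s$ varies the surface fibre acquires a node exactly where two roots $\lambda_i(s),\lambda_j(s)$ collide, i.e. over a branch point of $\Sigma\to S$. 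A local computation for a first-order collision of two roots shows the projection $Y_\Phi\to S$ then has a nondegenerate critical point, so the fibre is a nodal $A_m$-surface with a single ordinary double point — a Lefschetz singular fibre, with the total space staying smooth. Thus I would first reduce the statement to producing a \emph{generic} $\Phi$ (so that $\Sigma$ is smooth with simple branching) whose branch locus lies, up to isotopy, at the vertices of $\Delta_m^{\vee}$; the reducible fibres at $\bP$ are forced by the pole behaviour of the $\phi_j$ along $D$ and are handled by the local model at the poles (the $m+1$ sheets separate to infinity, each contributing a plane $\bC^2$), independently of the combinatorial matching.

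Next I would use the numerical coincidence already established. The branch points of $\Sigma\to S$ are the zeroes of the discriminant of $\Phi$ (the vanishing of $\det\Phi$), a section of $K_S(D)^{\otimes m(m+1)}$, of which there are $m(m+1)\cdot\deg K_S(D) = m(m+1)(2g-2+d)$; this agrees with the count $m(m+1)/2\cdot(4g-4+2d)$ of vertices of $\Delta_m^{\vee}$. So it is at least numerically possible to place one branch point at each inscribed black-triangle centre. To realise this geometrically I would invoke the spectral-network picture of \cite{GMN:snakes}: the ideal triangulation $\Delta$, refined to $\Delta_m$, determines a WKB-type simple branched $(m+1)$-fold cover $\Sigma_\Delta\to S$ whose branch points are in natural bijection with the inscribed black triangles, generalising the rank-one Bridgeland--Smith correspondence between saddle-free quadratic differentials and ideal triangulations \cite{BridgelandSmith}. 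The differentials $\phi_j$ are then recovered as the (signed) elementary symmetric functions of the $m+1$ local sheets $b=\lambda_i(s)$ of $\Sigma_\Delta$.

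Concretely, I would build $\Phi$ by a local-to-global argument: on each ideal triangle write down the explicit `snake' cover and its symmetric functions as local sections, arrange these to agree across edges (here the hypothesis of no self-folded triangles keeps every local model standard), and patch to global holomorphic $\phi_j\in H^0(K_S(D)^{\otimes j})$, using the positivity $\deg K_S(D)=2g-2+d>0$ and the fixed splitting $K_S(D)=\calL_1\otimes\calL_2$. Finally I would verify that the resulting $\Sigma_\Delta$ is smooth with only simple ramification, so that every singular fibre over a vertex of $\Delta_m^{\vee}$ is genuinely Lefschetz rather than more degenerate, and that no branch points have collided.

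The main obstacle is this last, global, step: the discriminant is a nonlinear function of the tuple $\Phi$, so controlling its zero divisor to be reduced and supported exactly at the prescribed configuration — while simultaneously preventing branch points from colliding and preserving the monodromy type of the cover — is delicate. I expect to handle it either by a transversality/dimension count, showing that the map $\Phi\mapsto(\text{branch configuration})$ is dominant onto an open subset of the relevant configuration space in the present range of $\deg K_S(D)$ (by Riemann--Roch, $h^0(K_S(D)^{\otimes j})=j(2g-2+d)-g+1$ grows linearly in $j$, giving ample freedom), together with a continuity argument deforming a generic branch configuration into the desired isotopy class; or more explicitly by assembling the GMN snake building blocks and checking simplicity of branching block by block. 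Because the conclusion is asserted only up to isotopy, the freedom to move $\Phi$ within its generic stratum is exactly what makes the matching achievable.
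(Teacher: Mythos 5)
Your first paragraph (reduction to placing the branch locus of $\Sigma \to S$, the local Lefschetz analysis at a simple root collision, and the numerical coincidence between branch points and vertices of $\Delta_m^{\vee}$) is consistent with the paper. But the heart of the lemma is the actual construction of $\Phi$, and there your argument has a genuine gap that you yourself flag: neither of your two proposed strategies for the global step is carried out. The transversality/dimension count via Riemann--Roch does not by itself show that the map $\Phi \mapsto (\text{branch configuration})$ is dominant, and the local-to-global patching of ``snake'' covers is problematic because local sections of $K_S(D)^{\otimes j}$ defined triangle-by-triangle do not glue to global holomorphic differentials without a genuine obstruction-theoretic argument. As written, the proof reduces the lemma to an unproved existence statement.

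The paper avoids this entirely by a degeneration trick that keeps everything global from the start. One takes a single generic quadratic differential $\phi_2$ defining $\Delta$ (with one simple zero per ideal triangle) and forms the manifestly global degenerate tuple $(\phi_2, 0, \beta_1\phi_2^2, 0, \beta_2\phi_2^4,\ldots)$, whose spectral curve is the reducible union \eqref{eqn:reducible_spectral_curve}. The associated threefold has one isolated singularity over each zero of $\phi_2$ (the stabilisation of a plane curve singularity of Milnor number $m(m-1)/2$), and a small \emph{linear} perturbation of the tuple --- whose existence is immediate --- smooths everything. All the analysis is then local near each zero of $\phi_2$: the discriminant of \eqref{eqn:reducible_spectral_curve} has local degree $m(m+1)/2$ in $z$, so exactly $m(m+1)/2$ Lefschetz critical values appear near each ideal triangle centre, matching the $m(m+1)/2$ vertices of $\Delta_m^{\vee}$ per triangle; a compactly supported area-preserving isotopy of the base, lifted to the total space, then places them at the vertices. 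Since the lemma is only asserted up to isotopy, no global control of the branch configuration is ever needed. One further small correction: the reducible fibres over $\bP$ are forced by the vanishing of $\delta$ in the defining equation $\delta a c = \Phi(b)$ (local model $\{\delta a c = b^{m+1}-1\}$), not by pole behaviour of the $\phi_j$ along $D$.
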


\begin{proof} 
Recall from \cite{GMN, BridgelandSmith} that a generic choice of quadratic differential $\phi_2$ on $S$ with double poles at $D\subset S$ defines an ideal triangulation $\Delta$, with a (simple) zero of $\phi_2$ at the centre of each triangle.  We consider a point of the higher rank Hitchin base $(\phi_2,\ldots, \phi_m)$, with $\phi_j \in H^0(K_S(D)^{\otimes j})$, which is a small perturbation of the degenerate tuple $(\phi_2,0,\beta_1\cdot \phi_2^2,0,\beta_2\cdot \phi_2^4,\ldots)$ for constants $\beta_j \in \bC^*$ chosen so that the associated spectral curve factorizes:
\begin{equation}\label{eqn:reducible_spectral_curve}
\begin{aligned}
\Sigma_0 & = & \{(b^2 - \phi_2)(b^2-\alpha_1\phi_2)\cdots(b^2-\alpha_k\phi_2) = 0\} \subset \mathrm{Tot}(K_S(D)) \qquad m=2k+1; \\ 
\Sigma_0 & = & \{b \,(b^2-\phi_2)(b^2-\alpha_1\phi_2)\cdots(b^2-\alpha_k\phi_2) = 0\} \subset \mathrm{Tot}(K_S(D)) \qquad m=2k+2;
\end{aligned}
\end{equation}
where $b$ denotes a co-ordinate on $K_S(D)$ and the $\alpha_j$ are pairwise distinct and not equal to $0$ or $1$. These curves are reducible, and the 3-fold conic fibration over $K_S(D)$ with discriminant $\Sigma_0$ has (for sufficiently general $\alpha_j$) an isolated singularity at each zero of $\phi_2$. In local co-ordinates  near a zero of $\phi_2 \approx z$ on $S$ (and recalling the zeroes of $\delta$ and of $\phi_2$ are different so locally $\delta \approx 1$), the local model  for the 3-fold is
\[
\{ac = (b^2-z)\cdots(b^2-\alpha_k z)\} \subset \bC^4 \qquad \mathrm{or} \qquad \{ac = b(b^2-z)\cdots(b^2-\alpha_k z)\} \subset \bC^4,
\]
which is the stabilisation of a weighted homogeneous plane curve singularity with Milnor number $m(m-1)/2$.  A small perturbation of the degenerate tuple to a generic tuple $\Phi$ of differentials will both smooth the threefold, and yield a smooth spectral curve which is generically branched over the $z$-plane, the branch points encoding the positions of the Lefschetz singularities of $Y_\Phi \to S$. 
(The affine modifications which relate $Y_\Phi$ to a conic bundle singular along the spectral curve do not affect the current local discussion, since they take place in fibres over $D$, which lie far from the singularities of the total space for the degenerate tuple, and from the locations of the Lefschetz singularities of the fibration after small perturbation of that tuple.)

The topology of the threefold is thus encoded, up to birational modifications far from the Lefschetz singularities, by the braid monodromy of the smoothed spectral curve $\Sigma \subset K_S(D)$.  Away from points of $D \subset S$, the initial 3-fold has fibre locally modelled on $\{ac = b^{m+1}\}$ near an isolated simple zero of the differential $\phi_2$; the singularity in the total space is an $A_m$-singularity in the fibre.  A small generic perturbation of the tuple gives rise to a smooth 3-fold locally cut out by $\{ac=b^{m+1} + P(b,z)\}$ in which  the corresponding map from the $z$-plane to configurations of roots is transverse to the discriminant locus of repeated roots; the Lefschetz singular fibres of the projection $Y\to S$ local to the given zero of $\phi_2$ then arise from values $z$ where $b^{m+1} + P(b,z)$ has a double root. The discriminant of \eqref{eqn:reducible_spectral_curve} has degree $(2k+2)(2k+1)/2$ respectively $(2k+3)(2k+2)/2$ (as a function of $z$) in the two cases, which in both cases yields the value $m(m+1)/2$. After a smooth area-preserving isotopy of the base, which can be lifted to a symplectic isotopy of the total space, one can arrange that these Lefschetz critical points lie at the vertices of $\Delta_m^{\vee}$.  Compare to \cite[Figures 1--3]{GMN:snakes}, and Proposition \ref{prop:sphere_configuration} below. 

Over points of $\bP$, i.e. points of the divisor $D$ where $\delta$ vanishes (simply), a local model for the 3-fold is given by $\{ \delta \,ac = b^{m+1}-1\} \subset \bC^4$, and the $\delta=0$ fibre is given by $m+1$ pairwise disjoint copies of $\bC^2$ with co-ordinates $(u,v)$.
\end{proof}

\begin{rmk} \label{rmk:exceptional} The case $g=1$ and $|\bP| = 1$ of an elliptic curve $S=E$ with one marked point $D=\{p\}$  is exceptional; in that case $\delta$ and $b$ both belong to the same one-dimensional space $H^0(\mathcal{O}_E(p))$, and the equation for the threefold associated to a reducible spectral curve becomes degenerate.  However, after including the perturbation terms the corresponding threefold is still smooth. \end{rmk}

\begin{rmk}
We do not assert that there is a tuple $\Phi$ for which the holomorphic projection has the described structure, only that it is symplectically isotopic to such.  
\end{rmk}

\begin{lem} \label{lem:H2Y}
The rational cohomology $H^2(Y_{\Phi};\bQ)$ has rank $dm+1$, and is spanned by the components of the reducible fibres over points of $\bP \subset \bS$, modulo the relation that their sum is independent of $p\in \bP$.
\end{lem}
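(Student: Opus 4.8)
The plan is to compute $H^2(Y_\Phi)$ by excising the reducible fibres and organising what remains as a Lefschetz fibration. Write $R = \bigsqcup_{p \in \bP} R_p$ for the union of reducible fibres; by \ref{lem:delta_m_controls_singular_fibres} each $R_p$ is a disjoint union of $m+1$ copies $P_{p,0},\dots,P_{p,m}$ of $\bC^2$, so $R$ is a disjoint union of $d(m+1)$ contractible, smoothly embedded complex-codimension-one submanifolds, and $H^*(R;\bQ)=\bQ^{d(m+1)}$ is concentrated in degree $0$. Set $Y^\circ = Y_\Phi \setminus R = \pi^{-1}(S\setminus\bP)$, an $A_m$-fibration over the punctured base carrying only Lefschetz critical fibres. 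The oriented rank-two normal bundle of $R$ gives a Thom isomorphism $H^k_R(Y_\Phi)\cong H^{k-2}(R)$, and the long exact sequence of the pair reads
\[
0 \to H^1(Y_\Phi) \to H^1(Y^\circ) \xrightarrow{\ \partial\ } H^0(R) \xrightarrow{\ i_!\ } H^2(Y_\Phi) \xrightarrow{\ j^*\ } H^2(Y^\circ) \to 0,
\]
where $i_!$ sends each component to the Poincaré--Lefschetz dual of its fundamental class in $H^2(Y_\Phi)\cong H^{\mathrm{BM}}_4(Y_\Phi)$. Thus the lemma reduces to two assertions: $H^2(Y^\circ)=0$ (so $i_!$ is onto and the components span), and $\operatorname{rank}\partial = d-1$ (so $b_2(Y_\Phi)=d(m+1)-(d-1)=dm+1$ and the only relations are the expected ones).

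For the vanishing $H^2(Y^\circ)=0$: removing the $N_L=m(m+1)(2g-2+d)$ Lefschetz critical values as well, $Y^\circ$ is built from a genuine fibre bundle $Y^{\circ\circ}$ with fibre the $A_m$-Milnor fibre $M \simeq \bigvee_m S^2$ by attaching one Lefschetz thimble (a $3$-cell along a vanishing $2$-sphere $V_j \subset M$) for each critical value. The Serre spectral sequence over the homotopically one-dimensional punctured base gives $H^2(Y^{\circ\circ})=H^2(M)^{\mathrm{mono}}\subseteq H^2(M)=\bQ^m$, and the pair sequence for the thimble attachment identifies $H^2(Y^\circ)$ with the kernel of the evaluation map $\alpha \mapsto (\langle\alpha,V_j\rangle)_j$. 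The key geometric input, which I would extract from the local model in \ref{lem:delta_m_controls_singular_fibres}, is that the $\{V_j\}$ span $H_2(M;\bQ)$: already within a single ideal triangle the $m(m+1)/2$ Lefschetz critical points realise the full set of matching cycles of the $A_m$-configuration, the $m$ consecutive ones forming a basis of $H_2(M;\bQ)=\bQ^m$. Spanning makes the evaluation map injective, so $H^2(Y^\circ)=0$ and the component classes generate $H^2(Y_\Phi)$.

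It remains to identify $\ker i_! = \operatorname{im}\partial$. For any two marked points, the preimage $\pi^{-1}(\gamma)$ of an embedded arc $\gamma\subset S$ joining them and avoiding the other special values is a locally finite $5$-chain exhibiting $\sum_i [P_{p,i}] = \sum_i [P_{p',i}]$; these ``total fibre independent of $p$'' relations span a $(d-1)$-dimensional subspace of $\ker i_!$. To see there are no others I would compute $\operatorname{rank}\partial = b_1(Y^\circ)-b_1(Y_\Phi)$ directly. A Mayer--Vietoris argument over $S$, using that the monodromy of $M$ around each reducible fibre is trivial (the weighted-homogeneous rescaling $a\mapsto e^{2\pi i\theta}a$ trivialises the family $\{wac=b^{m+1}-1\}$ over a loop in $w$), kills precisely the $N_L$ loops around the Lefschetz values and yields $b_1(Y^\circ)=2g+d-1$; since $M$ is simply connected, $\pi_1(Y_\Phi)$ is a quotient of $\pi_1(S)$, and checking that no genus generator dies (again via the trivial reducible-fibre monodromy) gives $b_1(Y_\Phi)=2g$. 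Hence $\operatorname{rank}\partial=d-1$, so $\operatorname{im}\partial$ is exactly the span of the fibre relations and $b_2(Y_\Phi)=dm+1$.

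The main obstacle is the vanishing statement $H^2(Y^\circ)=0$, since it rests on the geometric claim that the Lefschetz vanishing cycles span $H_2$ of the $A_m$-fibre; verifying this requires the explicit identification of the vanishing cycles with the matching spheres of the $A_m$-configuration coming from the braid monodromy of the spectral curve in \ref{lem:delta_m_controls_singular_fibres}, together with Picard--Lefschetz theory. The subsidiary $b_1$-computation is more routine but genuinely uses the triviality of the monodromy around the reducible fibres, which must be checked in the local model.
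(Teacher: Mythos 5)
Your argument is correct in all its essential claims, but it takes a genuinely different route from the paper's: the paper proves Lemma \ref{lem:H2Y} essentially by citation, deferring the rank computation to \cite[Section 6]{Abrikosov} and noting only that the generators ``can be extracted from his argument'' (plus the remark that the total fibre class is the pullback of an area form on $\bS$). Your self-contained computation --- excise the $d(m+1)$ contractible components of the reducible fibres, apply the Thom/Gysin sequence, and reduce everything to $H^2(Y^\circ)=0$ together with $\operatorname{rank}\partial=d-1$ --- is therefore more informative than what the paper records, and it isolates exactly which geometric inputs are needed: that the Lefschetz vanishing cycles span $H_2$ of the $A_m$-fibre (which does follow from the braid-monodromy factorisation in Lemma \ref{lem:delta_m_controls_singular_fibres} and Figure \ref{Fig:branching_data}, since the colliding pairs of sheets include all consecutive pairs and hence all simple roots of the $A_m$-lattice), and that the loops around the reducible fibres die in $\pi_1(Y_\Phi)$ (which one can see very concretely: the constant section $(\delta,a,b,c)=(\delta,0,\zeta,0)$ with $\zeta^{m+1}=1$ of the local model $\{\delta ac=b^{m+1}-1\}$ contracts the lifted boundary loop). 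The arithmetic $d(m+1)-(d-1)=dm+1$ and the identification of the $(d-1)$ relations via the Borel--Moore $5$-chains $\pi^{-1}(\gamma)$ both check out.

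One step deserves tightening. The homotopy model ``$Y^{\circ\circ}$ with one $3$-cell attached per critical value'' is not literally a model for $Y^\circ$: taken at face value it would give $b_1(Y^\circ)=2g+d-1+N_L$ (with $N_L$ the number of Lefschetz critical values), contradicting the value $2g+d-1$ that you correctly obtain by Mayer--Vietoris. The accurate statement is that $Y^\circ$ is the union of the preimage of the complement of small discs about the critical values with the pieces $\pi^{-1}(D_j)\simeq M\cup_{V_j}e^3$, glued along mapping tori of the Picard--Lefschetz monodromies, and one runs Mayer--Vietoris on that decomposition. For $H^2$ this still yields an injection of $H^2(Y^\circ)$ into $H^2(M)$ with image annihilating every parallel-transported vanishing class $V_j$, which is all you use, so the conclusion $H^2(Y^\circ)=0$ stands; but the phrase ``pair sequence for the thimble attachment'' should be replaced by this Mayer--Vietoris argument, for consistency with your own (correct) $b_1$ computation.
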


\begin{proof} The rank computation is given in \cite[Section 6]{Abrikosov}. The generators can be extracted from his argument.  Note that the total fibre class, which co-incides with the sum of the classes of the components of a fixed reducible fibre, agrees with the pullback of an area form on $\bS$ of total area $1$.
\end{proof}

When $m=2$, the space of K\"ahler forms on $Y_{\Phi}$ is an open cone of dimension $2d+1$, whilst the space of right equivalence classes of potentials has dimension $d+2$ by Remark \ref{rmk:abrikosov}. This underscores the fact that one cannot expect the `mirror' map in \eqref{eqn:project_potential} to be a local isomorphism. 

\begin{rmk}\label{rmk:kahler_monodromy}
There is a family of 3-folds $Y_{\Phi}$ over the space of generic tuples $\Phi$, and it is natural to consider K\"ahler forms which vary locally trivially over the family. The monodromy permutes components of the reducible fibres, and the monodromy-invariant subspace of $H^2(Y_{\Phi};\bR)$ has rank 2.  Up to global rescaling,  there is thus just a one-parameter family of invariant K\"ahler forms.\end{rmk}

\subsection{Tripod spheres\label{Sec:Tripods}}

We recall Donaldson's `matching sphere' construction.  Consider a symplectic Lefschetz fibration $X^{2n} \to \bC$ with two singular fibres lying over $\pm 1$, and a path $\gamma: [-1,1] \to \bC$ with $\gamma(\pm 1) = \pm 1$ and $\gamma(t) \not \in \{-1,+1\}$ for $t\in (-1,1)$.  Parallel transport along $\gamma$ gives rise to two Lagrangian $S^{n-1}$ vanishing cycles in the fibre $X_{\gamma(0)}$. If these are Hamiltonian isotopic, then after a deformation of the symplectic connexion on $X$ in a neighbourhood of the preimage of $\gamma$, one can arrange that the vanishing cycles agree exactly, and glue to form a Lagrangian $S^n \subset X$, which is well-defined up to Hamiltonian isotopy.  In this case, $\gamma$ is called a `matching path'; see \cite[Lemma 8.4]{AMP} and \cite[Section 16g]{Seidel:FCPLT}  for the details of the  construction. Because of the deformation of symplectic connexions, the matching sphere will in general only approximately lie over $\gamma$ for the original symplectic form $\omega$.  

If $X \to B$ is a symplectic Lefschetz fibration with fibre $T^*S^2 = A_1$, then since the fibre contains a unique Lagrangian sphere up to Hamiltonian isotopy \cite{Hind}, any path between critical values in the base  (disjoint from critical values in its interior) is a matching path. For $A_m$ fibred 3-folds with $m>1$, the fibre contains infinitely many Hamiltonian isotopy classes of Lagrangian sphere.  It will be useful to have a minor generalisation of the matching path construction, namely a  `matching tripod' construction giving rise to `tripod' Lagrangian spheres.

The $A_m$-surface $\{x^2+y^2+\prod_{j=1}^{m+1} (z-j) = 0\} \subset \bC^3$ inherits an exact  K\"ahler structure from $(\bC^3, \omega_{\st})$. It deformation retracts to a compact core (or skeleton) comprising an $A_m$-chain of Lagrangian spheres, which arise as matching spheres for the paths $[j.j+1] \subset\bR\subset\bC$ for $1 \leq j \leq m$.  Let $a$ and $b$ denote a pair of Lagrangian 2-spheres in $A_m$ which meet transversely at a single point, for instance (but not necessarily) a consecutive pair of spheres in the compact core, see Figure \ref{Fig:matching_paths_in_A2}.

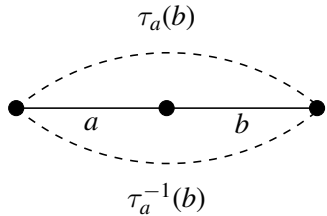
\begin{figure}[ht]
\begin{center}
\begin{tikzpicture}

\draw[semithick] (-2,0) -- (0,0) -- (2,0);
\draw[fill] (-2,0) circle (0.1);
\draw[fill] (0,0) circle (0.1);
\draw[fill] (2,0) circle (0.1);
\draw[semithick,dashed] (-2,0) arc (130:50:3.15);
\draw[semithick,dashed] (-2,0) arc (230:310:3.15);
\draw (-1,-0.2) node {$a$};
\draw (1,-0.2) node {$b$};
\draw (0,1.2) node {$\tau_a(b)$};
\draw (0,-1.2) node {$\tau_a^{-1}(b)$};

\end{tikzpicture}
\end{center}
\caption{Matching paths in the $A_2$-surface\label{Fig:matching_paths_in_A2}}
\end{figure}

\begin{lem} \label{lem:tripod_vs_matching_sphere}
Let $p: X \to \bC$ be a symplectic Lefschetz fibration with three singular fibres and fibre $A_m$ with $m>1$. Suppose that the vanishing cycles are as shown in the first image of Figure \ref{Fig:tripod_paths}, i.e. $a, \tau_a^{-1}(b), b$ for paths as drawn. Then there is a Lagrangian sphere which maps under $p$ to a small neighbourhood of the tripod spanning the three critical points.
\end{lem}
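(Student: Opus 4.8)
The plan is to realise the desired Lagrangian as an $S^1$-fibred sphere over a Lagrangian membrane in the base of the conic fibration on $X$, generalising Donaldson's matching-sphere construction from a path to the trivalent tripod. First I would pass to a local model: since $\tau_a^{-1}(b)$ differs from $b$ only inside a neighbourhood of $a$, only a neighbourhood of the $A_2$-plumbing $a\cup b$ in the fibre is relevant, so I may take for $X$ the local conic model $\{ac=f(b,z)\}\subset\bC^4$ of \cref{lem:delta_m_controls_singular_fibres} with $f(\cdot,z)$ a cubic, and $p$ the projection to the $z$-coordinate. Here the $A_2$-fibre $X_z=\{ac=f(b,z)\}$ is the conic bundle over the $b$-line whose Lagrangian $2$-spheres are the matching spheres over paths between the three roots $\beta_1(z),\beta_2(z),\beta_3(z)$, and the three Lefschetz critical values of $p$ are the $z$ at which two roots collide. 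I would then check that the three vanishing cycles $a,\tau_a^{-1}(b),b$ are exactly the matching spheres over the three sides $[\beta_1\beta_2],[\beta_2\beta_3],[\beta_1\beta_3]$ of the triangle of roots in the central fibre: this is precisely the statement that $\tau_a$ acts on matching paths by the half-twist braiding the two endpoints of $a$, so that $\tau_a^{-1}(b)$ is (isotopic to) the third side, and it identifies the three feet of the tripod with the three pairwise collisions.

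Granting this, the three spheres transported to the central fibre $X_{z_*}$ pairwise meet in a single point over the shared root, forming a cyclic $\tilde A_2$-configuration whose obvious filling is the solid triangle with vertices $\beta_1,\beta_2,\beta_3$. The key construction is a Lagrangian disk $\Delta\subset B=\bC_b\times\bC_z$ with $\partial\Delta\subset\Sigma=\{f=0\}$ (the spectral curve) and with $p(\Delta)$ the tripod. Its boundary is the hexagonal loop on $\Sigma$ that starts over $z_*$ on the sheet $\beta_1$, runs out a leg to a foot where two sheets collide, switches sheets, returns, and continues around all three legs, closing up after traversing each leg once on each of its two active sheets; this is a single embedded circle because the tripod is a tree and the three feet exhaust the pairwise collisions. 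Over $\Delta$ I take the family of isotropic vanishing circles of the conic fibre $\{ac=\lambda\}$, which collapse to a point exactly over $\partial\Delta$ where $\lambda=f=0$. An $S^1$-bundle over $D^2$ collapsed over the boundary circle is $S^3$, and it is Lagrangian because the conic circle is isotropic in the fibre and $\Delta$ is Lagrangian in $B$, as in the matching-sphere lifting of \cite[Section 16g]{Seidel:FCPLT}. By construction the resulting sphere maps under $p$ to $p(\Delta)$, the tripod.

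The main obstacle is realising $\Delta$ as a genuine \emph{Lagrangian} disk with the prescribed boundary and making the circle-fibration close up coherently. Two points need care. Filling the hexagonal boundary loop by a Lagrangian membrane projecting to the tripod forces the central region of $\Delta$ to spread slightly off the fibre $\{z=z_*\}$ (a flat triangle in a single symplectic slice is not isotropic), which is why one only controls $p(\Delta)$ up to a small neighbourhood of the tripod; contractibility of the tripod makes the filling unobstructed, but the Lagrangian condition and embeddedness near the trivalent vertex must be verified. More substantively, the holonomy of the conic $S^1$-fibration around $\partial\Delta$ must vanish for the fibres to glue into a closed sphere: this is the trivalent analogue of the deformation of the symplectic connexion in the ordinary matching-path construction, and it is exactly here that the triangle relation among $a,\tau_a^{-1}(b),b$ enters, the three feet forming a complete set of collisions so that the monodromies cancel. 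I expect this coherent-gluing step, together with checking that the sphere survives the small perturbation from the model K\"ahler form to the given $\omega$ (Lagrangians persisting under small deformations, with image still in a small neighbourhood of the tripod), to be the technical heart of the argument.
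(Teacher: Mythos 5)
Your route is genuinely different from the paper's. You build the sphere as the union of the vanishing circles of the conic fibration over a Lagrangian membrane $\Delta \subset \bC_b\times\bC_z$ whose boundary is the hexagonal curve on the spectral curve $\Sigma$ and whose projection is the tripod --- essentially the spectral-network/special-Lagrangian picture. The paper instead gets the closed Lagrangian almost for free: the Polterovich surgery of $a$ and $b$ in the $A_2$-fibre produces, via the Biran--Cornea formalism \cite{BiranCornea:lefschetz}, a Lagrangian cobordism with ends $a$, $b$, $\tau_a^{-1}(b)$, which completes to a closed Lagrangian in the three-critical-point fibration; and it is then identified with an ordinary matching sphere over the concatenation of the two outer vanishing paths (both of which carry the vanishing cycle $b$), using an isotopy of the matching path through the lowest critical value \cite[Lemma A.25]{AbouzaidSmith19}. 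That second step is what delivers smoothness, the $S^3$ topology, and uniqueness up to Hamiltonian isotopy in one stroke, and it is also where the distinction between the triples $\langle a,\tau_a^{-1}(b),b\rangle$ and $\langle a,\tau_a(b),b\rangle$ becomes visible; your version of that distinction (that $\tau_a^{-1}(b)$ is the third side of the triangle of roots, cf.\ Figure \ref{Fig:matching_paths_in_A2}) is correct.

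The gap in your write-up is that the two steps you defer --- the existence of the Lagrangian membrane $\Delta$ and the coherent closing-up of the circle fibration --- are not loose ends but the entire content of the lemma in your formulation. ``Contractibility of the tripod makes the filling unobstructed'' is not an argument: a Lagrangian disc in a symplectic $4$-manifold with prescribed boundary on $\Sigma$, embedded, meeting $\Sigma$ only along its boundary, and projecting to a small neighbourhood of the tripod, requires at minimum that the hexagonal boundary curve bound zero symplectic area (otherwise no Lagrangian filling exists for the given $\omega$), and then an actual construction near the trivalent vertex where, as you note, the membrane cannot sit in a single fibre. Likewise the vanishing of the $S^1$-holonomy around $\partial\Delta$ is asserted to ``follow from the triangle relation'' but is exactly the point at which the wrong triple $\langle a,\tau_a(b),b\rangle$ must be excluded, so it cannot be waved through. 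These can be repaired (the membrane picture is made rigorous in related settings, e.g.\ \cite{AAK} and the matching-cycle discussion in \cite[Section 16g]{Seidel:FCPLT}), but as it stands your proposal reduces the lemma to two unproved claims of comparable difficulty; the surgery-cobordism argument, or the reduction to a genuine matching path, sidesteps both.
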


\begin{figure}[ht]
\begin{center}
\begin{tikzpicture}[scale = 0.6]

\draw[fill] (0,-1.7) circle (0.1);
 \draw[fill] (1.5,1.5) circle (0.1);
 \draw[fill] (-1.5,1.5) circle (0.1);

\draw[semithick, rounded corners] (0,-1.7) -- (0,-0.5) -- (0.3,0.3) -- (1.5,1.5); 
\draw[semithick, rounded corners] (0,-1.7) -- (0,-0.5) -- (-0.3,0.3) -- (-1.5,1.5); 
\draw[semithick, rounded corners] (-1.5,1.5) -- (-0.3,0.3) -- (0.3,0.3) --(1.5,1.5);
\draw (0.8,0) node {$L$};

\draw[fill] (-6+0,-1.7) circle (0.1);
 \draw[fill] (-6+1.5,1.5) circle (0.1);
 \draw[fill] (-6+-1.5,1.5) circle (0.1);
\draw (-6+1.5,-3) circle (0.1);
\draw[semithick] (-4.5,-3) -- (-6,-1.7);
\draw[semithick] (-4.5,-3) arc (10:56:6.8);
\draw[semithick] (-4.5,-3) arc (-22:22:6);

\draw (-4.4,-3.5) node {$q$};
\draw (-5.5,-2.6) node {$a$};
\draw (-7,0) node {$\tau_a^{-1}(b)$};
\draw (-3.6,0) node {$b$};

\draw[fill] (+6+0,-1.7) circle (0.1);
 \draw[fill] (+6+1.5,1.5) circle (0.1);
 \draw[fill] (+6+-1.5,1.5) circle (0.1);
\draw (+6+1.5,-3) circle (0.1);
\draw[semithick, dashed] (12-4.5,-3) -- (12-6,-1.7);
\draw[semithick, dashed] (12-4.5,-3) arc (10:56:6.8);
\draw[semithick] (12-4.5,-3) arc (-22:22:6);
\draw[semithick] (12-4.5,-3) arc (280:150:3);

\draw (12-4.4,-3.5) node {$q$};
\draw (12-5.5,-2.6) node {$a$};
\draw (12-3.6,0) node {$b$};
\draw (12-7.5, -2.6) node {$b$};

\end{tikzpicture}
\end{center}
\caption{Matching paths for the Lagrangian tripod (left); the tripod sphere (middle); the tripod sphere as a matching sphere (right)\label{Fig:tripod_paths}}
\end{figure}
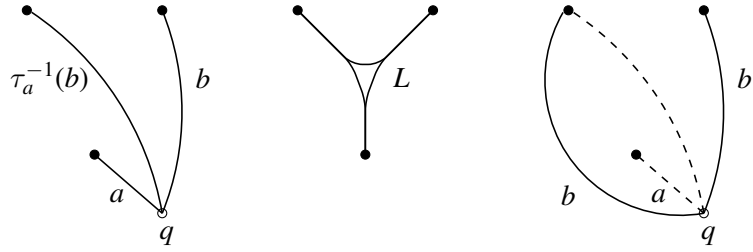

\begin{proof} 
The existence of a Lagrangian 3-sphere  $L$ mapping to the tripod neighbourhood follows from the construction of a Lagrangian cobordism from a Lagrange surgery. We apply this to the Polterovich surgery of the core spheres $a$ and $b$ in the $A_2$-Milnor fibre; this yields a cobordism in the total space of the product of an $A_2$-surface and a disc, whose three ends carry the three Lagrangians $a,b,\tau_a^{-1}(b)$.  The cobordism completes to a smooth closed Lagrangian submanifold of a Lefschetz fibration with three critical points as in the left picture, compare to \cite{BiranCornea:lefschetz}. It is straightforward to check that the resulting closed Lagrangian is a smooth sphere.

On the right side of Figure \ref{Fig:tripod_paths},  the two outer paths both have vanishing cycle $b$, so  their concatenation defines a matching path $\gamma$ and Lagrangian 3-sphere $L_{\gamma}$  in the total space of the Lefschetz fibration. The fact that $L_{\gamma}$ is Hamiltonian isotopic to the tripod sphere $L$ constructed previously is proved by isotoping the matching path $\gamma$  through the `lowest' critical value, see \cite[Lemma A.25]{AbouzaidSmith19} for details. (Cancelling one critical point and one handle in the fibre by a Weinstein surgery as in \cite{Cieliebak-Eliashberg},  the total space of the Lefschetz fibration $X$ is symplectomorphic to $T^*S^3$, with the matching sphere for $\gamma$ and hence the tripod sphere $L$ Hamiltonian isotopic to the zero-section.)
 \end{proof}

The tripod sphere does not map exactly to a tripod of arcs, but to a neighbourhood of that which is fattened near the vertex; we will call such spheres `essentially fibred'.

\begin{rmk}
If we had taken the matching paths $\langle b,\tau_a(b),a \rangle$ as the input ordered triple (for the same vanishing paths), rather than $\langle b, \tau_a^{-1}(b), a\rangle$, then we would still obtain a Lagrangian sphere in the total space, but the corresponding description as a matching sphere would break, since the leftmost vanishing path on the right image of Figure \ref{Fig:tripod_paths} would have associated vanishing cycle $\tau_a^2(b) \not\simeq b$.  (More precisely, the matching sphere would now lie over a path in a different homotopy class on the right hand picture.) \end{rmk}

\begin{rmk} The right hand picture of Figure \ref{Fig:tripod_paths} is symmetric in a way which is not manifest.  See Figure \ref{Fig:more_tripod_paths}.  There are matching spheres over both non-dashed paths (or the analogous 3rd path, not shown), using the fact that $\tau_a^{-1}(b) \simeq \tau_b(a)$ in the $A_2$-fibre, compare to Figure \ref{Fig:matching_paths_in_A2}. \end{rmk}

\begin{figure}[ht]
\begin{center}
\begin{tikzpicture}[scale=0.7]

\draw[fill] (+0,-1.7) circle (0.1);
 \draw[fill] (+1.5,1.5) circle (0.1);
 \draw[fill] (+-1.5,1.5) circle (0.1);
\draw (+1.5,-3) circle (0.1);
\draw (-1.5,-2.5) circle (0.1);
\draw[semithick] (-1.5,-2.5) arc (270:330:1.7);
\draw[semithick] (-1.5,-2.5) arc (240:45:2.55);

\draw (-3,1) node {$ a$};
\draw (-1,-2) node {$a$};

\draw[semithick, dashed] (6-4.5,-3) -- (6-6,-1.7);
\draw[semithick, dashed] (6-4.5,-3) arc (10:56:6.8);
\draw (6-5.5,-2.6) node {$a$};
\draw (6-5.7,0.7) node {$\tau_b(a)$};
\draw[semithick, dashed] (6-4.5,-3) arc (-22:22:6);
\draw (6-3.6,0) node {$b$};

\draw[fill] (+6+0,-1.7) circle (0.1);
 \draw[fill] (+6+1.5,1.5) circle (0.1);
 \draw[fill] (+6+-1.5,1.5) circle (0.1);
\draw (+6+1.5,-3) circle (0.1);
\draw[semithick, dashed] (12-4.5,-3) -- (12-6,-1.7);
\draw[semithick, dashed] (12-4.5,-3) arc (10:56:6.8);
\draw[semithick] (12-4.5,-3) arc (-22:22:6);
\draw[semithick] (12-4.5,-3) arc (280:150:3);

\draw (12-4.4,-3.5) node {$q$};
\draw (12-5.5,-2.6) node {$a$};
\draw (12-5.5,0.7) node {$\tau_a^{-1}(b)$};
\draw (12-3.6,0) node {$b$};
\draw (12-7.5, -2.6) node {$b$};

\draw (6-4.4,-3.5) node {$q$};
\draw (-1.5,-3) node {$q'$};

\end{tikzpicture}
\end{center}
\caption{Symmetry of matching spheres from tripod spheres: in $A_2$,  $\tau_a^{-1}(b) = \tau_b(a)$ and $\tau_{\tau_b(a)}(b) = a$\label{Fig:more_tripod_paths}}
\end{figure}
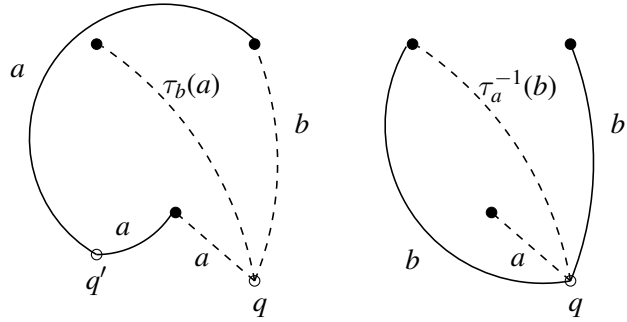

The essential point of the symmetry of the `good' tripod on the right of Figure \ref{Fig:tripod_paths} is that one can arrange a collection of tripods around a polygon so that the corresponding vanishing cycles agree and the configuration `closes up', cf. Figure \ref{Fig:tripod_cycle}.

\begin{figure}[ht]
\begin{center}
\begin{tikzpicture}[scale =1]

\draw[fill] (-1.25,0) circle (0.05);
\draw[fill] (1.25,0) circle (0.05);
\draw[fill] (-2,-2) circle (0.05);
\draw[fill] (2,-2) circle (0.05);
\draw[fill] (0,-3.5) circle (0.05);
\draw (2,0.5) circle (0.05);
\draw (-2,0.5) circle (0.05);
\draw[fill] (2.75,0.5) circle (0.05);
\draw[fill] (-2.75,0.5) circle (0.05);
\draw[fill] (2,1.25) circle (0.05);
\draw[fill] (-2,1.25) circle (0.05);

\draw (2.75,-2) circle (0.05);
\draw[fill] (3.25,-2.5) circle (0.05);
\draw[fill] (3.25,-1.5) circle (0.05);
\draw (-2.75,-2) circle (0.05);
\draw[fill] (-3.25,-2.5) circle (0.05);
\draw[fill] (-3.25,-1.5) circle (0.05);

\draw (0,-4.25) circle (0.05);
\draw[fill] (-.5,-4.75) circle (0.05);
\draw[fill] (0.5,-4.75) circle (0.05);

\draw (0,-1.75) circle (0.05);

\draw[semithick] (-1.25,0) -- (1.25,0) -- (2,-2) -- (0,-3.5) -- (-2,-2) -- cycle;
\draw[semithick] (1.25,0) -- (2, 0.5);
\draw[semithick] (2,0.5) -- (2.75, 0.5);
\draw[semithick] (2,0.5) -- (2,1.25);
\draw[semithick] (-1.25,0) -- (-2, 0.5);
\draw[semithick] (-2,0.5) -- (-2.75, 0.5);
\draw[semithick] (-2,0.5) -- (-2,1.25);

\draw[semithick] (2,-2) -- (2.75,-2);
\draw[semithick] (2.75,-2) -- (3.25,-1.5);
\draw[semithick] (2.75,-2) -- (3.25,-2.5);
\draw[semithick] (-2,-2) -- (-2.75,-2);
\draw[semithick] (-2.75,-2) -- (-3.25,-1.5);
\draw[semithick] (-2.75,-2) -- (-3.25,-2.5);

\draw[semithick] (0,-4.25) -- (0,-3.5);
\draw[semithick] (0,-4.25) -- (-.5,-4.75);
\draw[semithick] (0,-4.25) -- (.5,-4.75);

\draw[semithick, dashed] (2.75,0.5) -- (3.25,-1.5);
\draw[semithick, dashed] (-2.75,0.5) -- (-3.25,-1.5);
\draw[semithick, dashed] (.5,-4.75) -- (3.25,-2.5);
\draw[semithick, dashed] (-.5,-4.75) -- (-3.25,-2.5);
\draw[semithick, dashed] (2,1.25) -- (-2,1.25);

\draw (0,-.25) node {$a$};
\draw (1.25,-1) node {$a$};
\draw (-1.25,-1) node {$a$};
\draw (-1,-2.5) node {$a$};
\draw (1,-2.5) node {$a$};
\draw (-1.5,0.35) node {$a$};
\draw (-1.85,0.9) node {$b$};
\draw (0,1) node {$b$};
\draw (1.75,-3.3) node {$b$};
\draw (2.4,-1.8) node {$a$};

\draw[semithick, dotted, rounded corners, red] (0,-1.75) -- (1,0.75) -- (2,1.25);
\draw[semithick, dotted, rounded corners, red] (0,-1.75) -- (2,-1) -- (2.75,0.5);
\draw[semithick, dotted, rounded corners,red] (0,-1.75) -- (2,-2.5) -- (3.25,-2.5);
\draw[semithick, dotted, rounded corners, red] (0,-1.75) -- (1.2,0.6) -- (2.2,0.8) -- (2.75,0.5);

\draw[red] (1, 0.75) node {$b$};
\draw[red] (2.4,-0.9) node {$b$};
\draw[red] (2,-2.5) node {$b$};
\draw[red] (2.6,0.95) node {$\tau_a^{-1}(b)$};

\end{tikzpicture}
\end{center}
\caption{A cycle of tripod Lagrangians; labels denote vanishing cycles associated to dashed /  radial  paths \label{Fig:tripod_cycle}}
\end{figure}
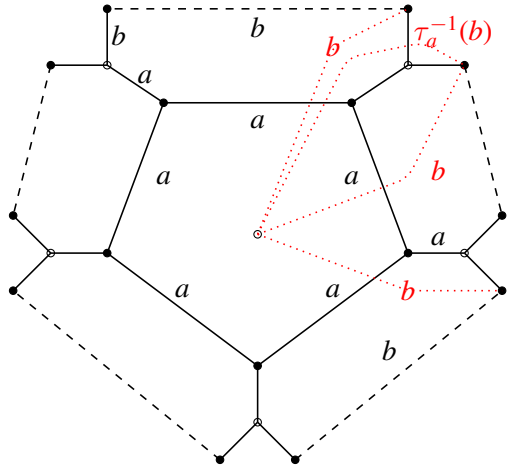

The arrangement displayed in Figure \ref{Fig:tripod_cycle} is local and topological: given two Lagrangian spheres $L_a, L_b \subset A_m$ which meet transversely once, one can construct a Lefschetz fibration over a disc with the given critical fibres and vanishing cycles. We need a globalisation of this local picture. 

\subsection{Spectral networks and sphere configurations}

Recall that the threefold $Y_{\Phi}$ is defined by an equation $\delta a c = b^{m+1} - \sum_{j=2}^{m+1} \phi_j b^{m+1-j} = 0$ in the total space of a vector bundle $\mathcal{W}$ over $S$.  Because we have well-defined parallel transport over paths in $S$, we can consider matching and tripod spheres in the total space. 

\begin{prop} \label{prop:sphere_configuration}
Fix an ideal triangulation $\Delta$ and the dual Lagrangian cellulation $\Delta_m^{\vee}$ of its subdivision $\Delta_m$. There is a configuration of Lagrangian spheres essentially fibred over $\Delta_m^{\vee}$, in which each ideal triangle in $\Delta$ contains $m(m-1)/2$ tripod Lagrangians, and these clusters are joined by $m$ matching spheres for each edge of $\Delta$.
\end{prop}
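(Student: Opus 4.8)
The plan is to build the configuration by assembling the local matching-sphere and matching-tripod constructions of \cref{Sec:Tripods} over the base $S$, using the well-defined parallel transport over paths in $S$ to turn the local Lagrangians into global closed ones. The starting point is \cref{lem:delta_m_controls_singular_fibres}: after the symplectic isotopy arranged there, the Lefschetz critical values of $Y_{\Phi}\to S$ sit exactly at the vertices of $\Delta_m^{\vee}$, i.e. at the centres of the inscribed black triangles, and near each such value the $A_m$-fibre degenerates by collapsing a single vanishing $2$-sphere. First I would identify all fibres with a fixed model $A_m$-surface by parallel transport, so that each critical value acquires a vanishing cycle (a Lagrangian $2$-sphere in the $A_m$-chain $V_1,\dots,V_m$) once a vanishing path has been chosen; the only ambiguity is the symplectic monodromy, which the global parallel transport lets me track.

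The combinatorial heart of the argument is the bookkeeping of these vanishing cycles against the cellulation $\Delta_m^{\vee}$. The sheets of the simple branched cover $\Sigma\to S$ label the roots of $\Phi(b)=0$, the chain generators $V_i$ correspond to the short saddle connections between consecutive sheets, and the monodromy around a branch point is the Dehn twist $\tau_{V_i}$ in the corresponding cycle. Within each ideal triangle the $m(m+1)/2$ branch points are arranged in the triangular array dual to the black triangles; its $m(m-1)/2$ \emph{downward} white triangles are exactly the cells whose three adjacent black-triangle centres are joined, through the downward triangle's centre, by a tripod in $\Delta_m^{\vee}$. I would show that, after parallel transport to the central fibre, the three vanishing cycles of such a tripod take the form $a,b,\tau_a^{-1}(b)$ for a transversally intersecting pair $a,b$, matching the hypothesis of \cref{lem:tripod_vs_matching_sphere} (and the local model of \cref{Fig:tripod_cycle}); applying that lemma then produces an essentially-fibred tripod $S^3$ for each downward white triangle, giving the claimed $m(m-1)/2$ tripods per ideal triangle.

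For the matching spheres, I would treat each of the $m$ quiver vertices lying on a given edge of $\Delta$ separately. Dual to such a vertex is an edge of $\Delta_m^{\vee}$ crossing the edge of $\Delta$ and joining a black-triangle centre in one adjacent ideal triangle to one in the other. Parallel transporting the two endpoint vanishing cycles to the mid-fibre, I would check that they coincide up to Hamiltonian isotopy (both equal the same $V_i$, because the relevant pair of sheets of $\Sigma$ is glued across the edge), so the path is a matching path and the Donaldson construction recalled in \cref{Sec:Tripods} yields a matching $S^3$. This produces $m$ matching spheres per edge, joining the tripod clusters of neighbouring triangles. Counting over all triangles and edges then reproduces the asserted totals.

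The main obstacle is the \emph{global consistency} of the vanishing-cycle assignment: locally each tripod and each matching sphere exists, but a single branch point is shared among several tripods and matching paths, and its vanishing cycle must be Hamiltonian-isotopic when viewed from each of them, so that the configuration ``closes up'' as in \cref{Fig:tripod_cycle}. This reduces to verifying that the braid (symplectic) monodromy of $\Sigma$ factorises in the pattern dictated by $\Delta_m^{\vee}$ and that the Dehn-twist relations close around every white region; concretely one needs the $A_2$-identities $\tau_a^{-1}(b)\simeq\tau_b(a)$ and $\tau_{\tau_b(a)}(b)\simeq a$ of \cref{Sec:Tripods}, propagated around each black triangle and each white cell. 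I expect this monodromy-factorisation check — rather than the existence of any single sphere — to be where the real work lies, and it is exactly here that the explicit degenerate spectral curve \eqref{eqn:reducible_spectral_curve} and the genus hypothesis are used to rule out extra isotopy ambiguity obstructing the gluing.
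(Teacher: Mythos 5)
Your overall strategy coincides with the paper's: use the degenerate spectral curve \eqref{eqn:reducible_spectral_curve} and Lemma \ref{lem:delta_m_controls_singular_fibres} to place the Lefschetz values at the vertices of $\Delta_m^{\vee}$, read off vanishing cycles from the braid monodromy of the smoothed spectral curve, apply Lemma \ref{lem:tripod_vs_matching_sphere} to the $m(m-1)/2$ interior triples in each ideal triangle, and use ordinary matching paths across the edges. The paper makes the local step more precise than you do --- the monodromy around a zero of $\phi_2$ is the longest element of the symmetric group, its braid lift is the Garside element, and the \emph{canonical factorisation} of that element into $m(m+1)/2$ half-twists is what defines the local smoothing and pins down the vanishing-cycle labels of Figure \ref{Fig:branching_data} --- but your bookkeeping via the chain generators $V_i$ and the triple $a,b,\tau_a^{-1}(b)$ is the same mechanism.

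The genuine gap is at the step you yourself flag as the crux: closing the configuration up around each marked point $p\in\bP$. Your proposed ingredients there are the $A_2$ Dehn-twist identities and ``the genus hypothesis,'' and neither suffices: the identities $\tau_a^{-1}(b)\simeq\tau_b(a)$, $\tau_{\tau_b(a)}(b)\simeq a$ only control consistency around a single white cell, not around the full star of $p$, and the hypothesis $g(\bS)>0$ plays no role in this proposition (it is used only to control holomorphic curves in the Fukaya category). What the paper actually uses is: (i) for a fixed $p$, one may place all branch cuts across edges of triangles \emph{not} adjacent to $p$, so the sheet labelling is consistent along every path of $\Delta_m^{\vee}$ crossing an edge adjacent to $p$; (ii) the monodromy around the boundary of the star of $p$ is a power of the Garside element, which preserves the compact core of the $A_m$-fibre, acting on it either trivially or by reversing the order of its components according to the parity of the valence of $p$; and (iii) the local model near the reducible fibre is $\bC^*$-equivariant, so the K\"ahler form can be deformed to be $S^1$-invariant and the monodromy around the reducible fibre made symplectically trivial on the $A_m$-surface (an isotopy that is not compactly supported at infinity). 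Without (ii) and (iii) the vanishing-cycle assignments seen from two different marked points need not agree, and the matching spheres along a shared edge would not be well defined. You should replace the appeal to the genus hypothesis with this argument.
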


\begin{proof} 
This is an extension of Lemma \ref{lem:delta_m_controls_singular_fibres}, and is again implicit in the work of Gaiotto-Moore-Neitzke relating their spectral networks to ideal triangulations \cite{GMN:snakes}.  Focus first on the geometry inside a single ideal triangle. In the $A_1$ situation, the spectral cover is a double cover and the monodromy at a simple zero of $\phi_2$ swaps the sheets. We are taking a perturbation of a degenerate case in which we replace $b^2-\phi$ by \eqref{eqn:reducible_spectral_curve}, for which the monodromy around a zero of $\phi_2$ completely reverses the order of the sheets, giving the longest element $(1, m) (2, m-1) (3, m-2) \cdots$ of the symmetric group. The braid monodromy for such a reducible curve was computed in \cite[Section 5]{Cohen-Suciu} \cite[Lemma 4.1]{VietDung}, and yields 
the Garside element of the braid group. This is the lift of the longest element of the symmetric group, which admits a factorization as the canonical sequence of $m(m+1)/2$  half-twists lifting the permutations \begin{multline}
[(m, m-1) (m-1, m-2)\ldots  (4, 3) (3, 2) (2, 1)] \cdot  [(m, m-1) (m-1, m-2) \ldots(4, 3) (3, 2)] \cdot \\ \cdot  [(m, m-1)\ldots (4, 3)]\cdot  \ldots \cdot [(m, m-1) (m-1, m-2)] \cdot [(m, m-1)],
\end{multline}
compare to \cite{Brieskorn-Saito}, \cite[Section 2]{Looijenga} and the labellings of the pairs of sheets  in Figure \ref{Fig:branching_data}. The above factorisation \emph{defines} a local smooth symplectic surface in the four-ball simply branched over the $z$-plane, compare to \cite{Loi-Piergallini,Orevkov}, and gives a local model for the smoothing of the spectral curve in  the proof of Lemma \ref{lem:delta_m_controls_singular_fibres}.

In any ideal triangle for $\Delta_2^{\vee}$, three sheets of the spectral cover interact. The vertices of $\Delta_m^{\vee}$ can be grouped into (overlapping) triples governed by the same local geometry, see Figure \ref{Fig:branching_data} (and compare to the corresponding discussion around \cite[Figure 1]{GMN:snakes}).  Each such triple then bounds a Lagrangian tripod sphere, and the previous discussion implies these tripods fit together as in (the perhaps higher rank analogue of) Figure \ref{Fig:a3_lag_cellulation}.  

Before perturbation, the only branching of the reducible spectral curve happens at the zeroes of $\phi_2$ or equivalently the centres of the ideal triangles. These have fibrewise $A_m$-singularities in which the whole compact core of the fibre degenerates into the critical point. For any given edge between two ideal triangles, one can place the branch cuts away from that edge, so the labelling of sheets is consistent along the paths of the cellulation which cross the edge of the given ideal triangle; indeed, for a given point  $p \in D$ and associated vertex of the ideal triangulation, one can place all branch cuts across the edges of triangles which are not adjacent to $p$. (This labelling of sheets is incorporated into the data of the `eigen-ordering' at $p$ introduced below in Definition \ref{defn:eigen}.)   This yields a system of matching paths  across all edges of triangles adjacent to $p$ after perturbation; compare to Figure \ref{Fig:tripod_cycle}, cf. also the discussion in  \cite[Section 4]{GMN:snakes} of the `asymptotic behaviour of the $\mathcal{S}$-walls for lifted theories'.   

The local model near the reducible fibres from the end of Lemma \ref{lem:delta_m_controls_singular_fibres} is $\bC^*$-equivariant for a $\bC^*$-action of weight one in $\delta$. The local K\"ahler form can be deformed to be $S^1$-invariant, and the monodromy around the reducible fibre over a point  $p \in D$ is then symplectically trivial on the $A_m$-surface. The monodromy around the outer boundary of the  configuration of ideal  triangles adjacent to $p$ is a power of the Garside element, and acts either trivially on the compact core of the $A_m$-surface or preserving the core but  reversing the chosen order of its components, depending on the parity of the valence of $p$ in the ideal triangulation.   In either case, the configuration of matching spheres  constructed from the viewpoint of $p$ is compatible with that one would construct around another point $q\in D$.  \end{proof}

\begin{rmk} Consider the case $m=2$, $g=1$ and $|\bP|=1$, see Figure \ref{Fig:globalising_configurations}. As indicated by the labelled vanishing cycles, the monodromy around the boundary of a fundamental domain of the torus is the square $\Delta^2 = (\tau_b\tau_a\tau_b)^2$ of the Garside element, which is central in the braid group.  There is non-trivial monodromy around both generating loops\footnote{These loops do not have canonical lifts to elements of the fundamental group of the smooth locus.} for $\pi_1(T^2)$, i.e. the meridian and longitude depicted as the black boundaries of the fundamental domain; indeed, the underlying $m=1$ theory in this case has a single branch cut along each side, and the monodromy on both edges of the fundamental domain induces the permutation $(1,3)(2)$ of the three sheets of the spectral curve, compare to \cite[Figure 10]{Hollands-Neitzke}.  For a global K\"ahler form, the monodromy around the reducible fibre is non-trivial but centralises the braid group. The $\bC^*$-invariant model in Proposition \ref{prop:sphere_configuration} trivialises this monodromy by an isotopy which is not compactly supported at infinity. \end{rmk}

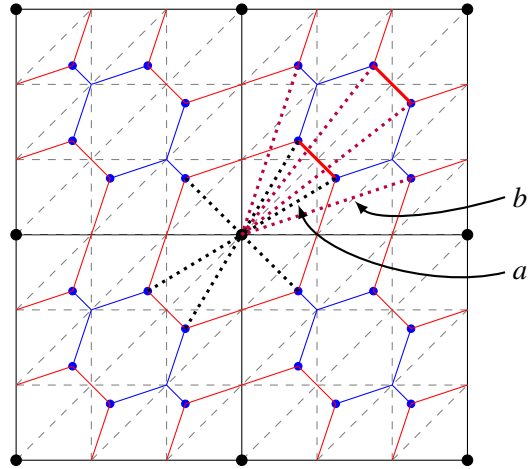
\begin{figure}[ht]
\begin{center}
\begin{tikzpicture}[scale=1]

\draw  (0,0) -- (3,0);
\draw  (0,0) -- (0,3);
\draw  (0,3) -- (3,3);
\draw  (3,0) -- (3,3);

\draw  (0,0) -- (-3,0);
\draw (-3,0) -- (-3,3);
\draw (0,3) -- (-3,3);

\draw (0,0) -- (0,-3);
\draw (0,-3) -- (3,-3);
\draw (3,0) -- (3,-3);

\draw (0,-3) -- (-3,-3);
\draw (-3,0) -- (-3,-3);

\draw[dashed,gray] (-3,-3) -- (3,3);
\draw[dashed,gray] (-3,-2) -- (2,3);
\draw[dashed,gray] (-3,-1) -- (1,3);
\draw[dashed,gray] (-3,0) -- (3,0);
\draw[dashed,gray] (-3,1) -- (-1,3);
\draw[dashed,gray] (-3,2) -- (-2,3);
\draw[dashed,gray] (-1,-3) -- (3,1);
\draw[dashed,gray] (1,-3) -- (3,-1);
\draw[dashed,gray] (0,-3) -- (3,0);
\draw[dashed,gray] (2,-3) -- (3,-2);
\draw[dashed,gray] (-3,0) -- (0,3);
\draw[dashed,gray] (-2,-3) -- (3,2);

\draw[dashed,gray] (-3,1) -- (3,1);
\draw[dashed,gray] (-3,2) -- (3,2);
\draw[dashed,gray] (-3,-1) -- (3,-1);
\draw[dashed,gray] (-3,-2) -- (3,-2);
\draw[dashed,gray] (-2,3) -- (-2,-3);
\draw[dashed,gray] (-1,3) -- (-1,-3);
\draw[dashed,gray] (1,3) -- (1,-3);
\draw[dashed,gray] (2,3) -- (2,-3);

\draw[fill] (3,3) circle (0.07);
\draw[fill] (-3,3) circle (0.07);
\draw[fill] (3,-3) circle (0.07);
\draw[fill] (-3,-3) circle (0.07);
\draw[fill] (0,0) circle (0.07);
\draw[fill] (3,0) circle (0.07);
\draw[fill] (-3,0) circle (0.07);
\draw[fill] (0,3) circle (0.07);
\draw[fill] (0,-3) circle (0.07);

\draw[fill,blue] (0.75,1.25) circle (0.05);
\draw[fill,blue] (0.75,2.25) circle (0.05);
\draw[fill,blue] (1.75,2.25) circle (0.05);
\draw[blue] (0.75,1.25) -- (1,2);
\draw[blue] (0.75,2.25) -- (1,2);
\draw[blue] (1.75,2.25) -- (1,2);

\draw[fill,blue] (1.25,0.75) circle (0.05);
\draw[fill,blue] (2.25,0.75) circle (0.05);
\draw[fill,blue] (2.25,1.75) circle (0.05);
\draw[blue] (1.25,0.75) -- (2,1);
\draw[blue] (2.25,0.75) -- (2,1);
\draw[blue] (2.25,1.75) -- (2,1);

\draw[fill,blue] (-3+0.75,1.25) circle (0.05);
\draw[fill,blue] (-3+0.75,2.25) circle (0.05);
\draw[fill,blue] (-3+1.75,2.25) circle (0.05);
\draw[blue] (-3+0.75,1.25) -- (-3+1,2);
\draw[blue] (-3+0.75,2.25) -- (-3+1,2);
\draw[blue] (-3+1.75,2.25) -- (-3+1,2);

\draw[fill,blue] (-3+1.25,0.75) circle (0.05);
\draw[fill,blue] (-3+2.25,0.75) circle (0.05);
\draw[fill,blue] (-3+2.25,1.75) circle (0.05);
\draw[blue] (-3+1.25,0.75) -- (-3+2,1);
\draw[blue] (-3+2.25,0.75) -- (-3+2,1);
\draw[blue] (-3+2.25,1.75) -- (-3+2,1);

\draw[fill,blue] (0.75,-3+1.25) circle (0.05);
\draw[fill,blue] (0.75,-3+2.25) circle (0.05);
\draw[fill,blue] (1.75,-3+2.25) circle (0.05);
\draw[blue] (0.75,-3+1.25) -- (1,-3+2);
\draw[blue] (0.75,-3+2.25) -- (1,-3+2);
\draw[blue] (1.75,-3+2.25) -- (1,-3+2);

\draw[fill,blue] (1.25,-3+0.75) circle (0.05);
\draw[fill,blue] (2.25,-3+0.75) circle (0.05);
\draw[fill,blue] (2.25,-3+1.75) circle (0.05);
\draw[blue] (1.25,-3+0.75) -- (2,1-3);
\draw[blue] (2.25,-3+0.75) -- (2,1-3);
\draw[blue] (2.25,-3+1.75) -- (2,1-3);

\draw[fill,blue] (0.75-3,-3+1.25) circle (0.05);
\draw[fill,blue] (0.75-3,-3+2.25) circle (0.05);
\draw[fill,blue] (1.75-3,-3+2.25) circle (0.05);
\draw[blue] (0.75-3,-3+1.25) -- (1-3,-3+2);
\draw[blue] (0.75-3,-3+2.25) -- (1-3,-3+2);
\draw[blue] (1.75-3,-3+2.25) -- (1-3,-3+2);

\draw[fill,blue] (1.25-3,-3+0.75) circle (0.05);
\draw[fill,blue] (2.25-3,-3+0.75) circle (0.05);
\draw[fill,blue] (2.25-3,-3+1.75) circle (0.05);
\draw[blue] (1.25-3,-3+0.75) -- (2-3,1-3);
\draw[blue] (2.25-3,-3+0.75) -- (2-3,1-3);
\draw[blue] (2.25-3,-3+1.75) -- (2-3,1-3);

\draw[red, very thick] (1.25,0.75) -- (0.75,1.25);
\draw[red] (0.75,1.25) -- (-0.75,.75);
\draw[red] (-0.75,0.75) -- (-1.25,-0.75);
\draw[red] (-1.25,-0.75) -- (-0.75,-1.25);
\draw[red] (-.75,-1.25) -- (0.75,-.75);
\draw[red] (0.75,-0.75) -- (1.25,0.75);

\draw[red, very thick] (1+1.25,1.75) -- (1+0.75,2.25);
\draw[red] (0.75,2.25) -- (-0.75,1.75);
\draw[red] (-1-0.75,0.75) -- (-2.25,-0.75);
\draw[red] (-2.25,-1.75) -- (-1.75,-2.25);
\draw[red] (-.75,-2.25) -- (0.75,-1.75);
\draw[red] (1.75,-0.75) -- (2.25,0.75);

\draw[red] (-2+1.25,1.75) -- (-2+0.75,2.25);
\draw[red] (-3+1.25,0.75) -- (-3+0.75,1.25);
\draw[red] (1.75,-0.75) -- (2.25,-1.25);
\draw[red] (0.75,-1.75) -- (1.25,-2.25);

\draw[red] (0.75,2.25) -- (1,3);
\draw[red] (1.75,2.25) -- (2,3);
\draw[red] (-1.25,2.25) -- (-1,3);
\draw[red] (-2.25,2.25) -- (-2,3);
\draw[red] (-2.25,2.25) -- (-3,2);
\draw[red] (-2.25,1.25) -- (-3,1);
\draw[red] (-2.25,-0.75) -- (-3,-1);
\draw[red] (-2.25,-1.75) -- (-3,-2);
\draw[red] (-1.75,-2.25) -- (-2,-3);
\draw[red] (-.75,-2.25) -- (-1,-3);
\draw[red] (1.25,-2.25) -- (1,-3);
\draw[red] (2.25,-2.25) -- (2,-3);
\draw[red] (2.25,-2.25) -- (3,-2);
\draw[red] (2.25,-1.25) -- (3,-1);
\draw[red] (2.25,0.75) -- (3,1);
\draw[red] (2.25,1.75) -- (3,2);

\draw[very thick,dotted] (0.75,1.25) -- (0,0);
\draw[very thick,dotted] (1.25,0.75) -- (0,0);
\draw[very thick,dotted] (-0.75,0.75) -- (0,0);
\draw[very thick,dotted] (-1.25,-0.75) -- (0,0);
\draw[very thick,dotted] (-0.75,-1.25) -- (0,0);
\draw[very thick,dotted] (0.75,-0.75) -- (0,0);

\draw[very thick,dotted, purple] (2.25,0.75) -- (0,0);
\draw[very thick,dotted, purple] (0.75,2.25) -- (0,0);
\draw[very thick,dotted, purple] (1.75,2.25) -- (0,0);
\draw[very thick,dotted, purple] (2.25,1.75) -- (0,0);

\draw[thick, arrows = ->] (3.5,0.5) .. controls (2.5,0.2) and (1.75,0.2) .. (1.5,0.45);
\draw (3.7,0.5) node {$b$};
\draw[thick, arrows = ->] (3.5,-0.5) .. controls (2.5,-0.75) and (1,-0.25) .. (0.75,0.4);
\draw (3.7,-0.5) node {$a$};

\end{tikzpicture}
\caption{The configuration $\Gamma(\Delta_2^{\vee})$ for $g=1$ and $|\bP|=1$, in four fundamental domains. The core spheres $\{a,b\} \subset A_2$ are the vanishing cycles for dotted black respectively purple paths indicated, so the thick red arcs carry corresponding matching spheres. The monodromy along the meridian / longitude exchanges $a$ and $b$.\label{Fig:globalising_configurations}}
\end{center}
\end{figure}

\begin{rmk}
  Recall from Lemma \ref{lem:delta_m_controls_singular_fibres} that the 3-fold associated to the reducible spectral curve \eqref{eqn:reducible_spectral_curve} has isolated singularities of Milnor number $m(m-1)/2$ at the zeroes of $\phi_2$;  the set of $m(m-1)/2$ tripod spheres in the corresponding ideal triangle presumably gives a distinguished basis of vanishing cycles of the singularity (this should follow from \cite{ACampo}, but we will not need it).  \end{rmk}

The total number of Lagrangian spheres in the configuration of tripods and matching spheres is then 
\[
(6g-6+3d)\cdot m + (2g-2+d)\cdot m(m-1).\]
Let $\Gamma(\Delta_m^{\vee})$ denote this set of Lagrangian spheres in $Y_{\Phi}$.

\begin{figure}
\begin{center}
\begin{tikzpicture}[scale=0.7]

\tikzset{cross/.style={cross out, draw=black, minimum size=2*(#1-\pgflinewidth), inner sep=0pt, outer sep=0pt},
cross/.default={4pt}}

\draw[thick] (0,0) node[cross,red] {};
\draw[thick] (-1,0) node[cross,red] {};
\draw[thick] (1,0) node[cross,red] {};
\draw[thick] (.5,.5) node[cross,red] {};
\draw[thick] (-.5,.5) node[cross,red] {};
\draw[thick] (0,1) node[cross,red] {};

\draw[semithick, arrows=->] (0,0) -- ({0+5*cos 100},{0+5*sin 100});
\draw[semithick, arrows=->] (1,0) -- ({1+5*cos 100},{0+5*sin 100});
\draw[semithick, arrows=->] (-1,0) -- ({-1+5*cos 100},{0+5*sin 100});
\draw[semithick, arrows=->] (0.5,0.5) -- ({0.5+5*cos 100},{0.5+5*sin 100});
\draw[semithick, arrows=->] (-0.5,0.5) -- ({-0.5+5*cos 100},{0.5+5*sin 100});
\draw[semithick, arrows=->] (0,1) -- ({0+5*cos 100},{1+5*sin 100});

\draw[semithick, arrows=->] (0,0) -- ({0+5*cos 190},{0+5*sin 190});
\draw[semithick, arrows=->] (1,0) -- ({1+5*cos 190},{0+5*sin 190});
\draw[semithick, arrows=->] (-1,0) -- ({-1+5*cos 190},{0+5*sin 190});
\draw[semithick, arrows=->] (0.5,0.5) -- ({0.5+5*cos 190},{0.5+5*sin 190});
\draw[semithick, arrows=->] (-0.5,0.5) -- ({-0.5+5*cos 190},{0.5+5*sin 190});
\draw[semithick, arrows=->] (0,1) -- ({0+5*cos 190},{1+5*sin 190});

\draw[semithick, arrows=->] (0,0) -- ({0+5*cos 350},{0+5*sin 350});
\draw[semithick, arrows=->] (1,0) -- ({1+5*cos 350},{0+5*sin 350});
\draw[semithick, arrows=->] (-1,0) -- ({-1+5*cos 350},{0+5*sin 350});
\draw[semithick, arrows=->] (0.5,0.5) -- ({0.5+5*cos 350},{0.5+5*sin 350});
\draw[semithick, arrows=->] (-0.5,0.5) -- ({-0.5+5*cos 350},{0.5+5*sin 350});
\draw[semithick, arrows=->] (0,1) -- ({0+5*cos 350},{1+5*sin 350});

\draw[blue] ({0+3*cos 100},{0+3*sin 100}) node {\tiny${12}$};
\draw[blue] ({-1+3*cos 100},{0+3*sin 100}) node {\tiny${12}$};
\draw[blue] ({1+3*cos 100},{0+3*sin 100}) node {\tiny${12}$};
\draw[blue] ({0.5+4*cos 100},{0.5+4*sin 100}) node {\tiny${23}$};
\draw[blue] ({-0.5+4*cos 100},{0.5+4*sin 100}) node {\tiny${23}$};
\draw[blue] ({0+5.25*cos 100},{1+5.25*sin 100}) node {\tiny${34}$};

\draw[blue] ({0+3.5*cos 190},{0+3.5*sin 190}) node {\tiny${32}$};
\draw[blue] ({-1+5.25*cos 190},{0+5.25*sin 190}) node {\tiny${21}$};
\draw[blue] ({1+3*cos 190},{0+3*sin 190}) node {\tiny${43}$};
\draw[blue] ({0.5+3*cos 190},{0.5+3*sin 190}) node {\tiny${43}$};
\draw[blue] ({-0.5+3.5*cos 190},{0.5+3.5*sin 190}) node {\tiny${32}$};
\draw[blue] ({0+3*cos 190},{1+3*sin 190}) node {\tiny${43}$};

\draw[blue] ({0+3.5*cos 350},{0+3.5*sin 350}) node {\tiny${32}$};
\draw[blue] ({-1+3*cos 350},{0+3*sin 350}) node {\tiny${43}$};
\draw[blue] ({1+5.25*cos 350},{0+5.25*sin 350}) node {\tiny${21}$};
\draw[blue] ({0.5+3.5*cos 350},{0.5+3.5*sin 350}) node {\tiny${32}$};
\draw[blue] ({-0.5+3*cos 350},{0.5+3*sin 350}) node {\tiny${43}$};
\draw[blue] ({0+3*cos 350},{1+3*sin 350}) node {\tiny${43}$};


\draw[-stealth,orange,decorate,decoration={snake,amplitude=3pt,pre length=2pt,post length=3pt}] (0,0) --(0.2,-3);
\draw[-stealth,orange,decorate,decoration={snake,amplitude=3pt,pre length=2pt,post length=3pt}] (1,0) --(1.2,-3);
\draw[-stealth,orange,decorate,decoration={snake,amplitude=3pt,pre length=2pt,post length=3pt}] (-1,0) --(-0.8,-3);
\draw[-stealth,orange,decorate,decoration={snake,amplitude=3pt,pre length=2pt,post length=3pt}] (0.5,0.5) --(0.7,-2.5);
\draw[-stealth,orange,decorate,decoration={snake,amplitude=3pt,pre length=2pt,post length=3pt}] (-0.5,0.5) --(-0.3,-2.5);
\draw[-stealth,orange,decorate,decoration={snake,amplitude=3pt,pre length=2pt,post length=3pt}] (0,1) --(0.2,-2);

\end{tikzpicture}
\caption{Branching data for the spectral cover over $\Delta_m$ in one ideal triangle of $\Delta$ (with branch cuts below)\label{Fig:branching_data}}
\end{center}
\end{figure}

\section{The cyclic potential from holomorphic polygons}

\subsection{Floer theory background}

By construction, $Y_{\Phi}$ is equipped with an integrable complex structure $I$, arising as an algebraic subvariety of an algebraic $\bC^3$-bundle over the Riemann surface $S$. 
We can take its closure in the fibrewise completion, a $\bC\bP^3$-bundle over $S$, to obtain a projective compactification. This is in general singular, but resolving singularities yields a projective compactification $\bar{Y}$ which comes with  a map  $p: \bar{Y} \to S$ to $S$ and has normal crossing boundary. 

We are assuming $g(\bS)>0$, so any rational curve in $\bar{Y}$ maps by a constant map to $S$ so lies in a fibre of $p$, hence meets the boundary.  Since the fibre $p^{-1}(x) \backslash \{p^{-1}(x) \cap Bd(Y)\}$ is affine, the boundary is relatively ample on the singular compactification, hence relatively nef on the resolution.

 For Floer theory, it will be useful to perturb the  complex structure $I$.  We work with the class $\scrJ_{\pi}$ of almost complex structures on $Y$ which tame an $I$-K\"ahler form on $Y$ and which make projection $\pi: Y_{\Phi} \to S$ holomorphic, and which agree with $I$ outside a compact set. In this case, polygons with boundary conditions on the Lagrangians $\Gamma(\Delta_m^{\vee})$ map to holomorphic discs with boundary on the edges of the cellulation $\Delta_m^{\vee}$, to which one can apply the open mapping theorem. Since the fibres of $\pi$ are exact, and contain no rational curves, it is standard that one can achieve transversality in the class $\scrJ_{\pi}$. Although $Y_{\Phi}$ is non-compact and not manifestly of contact type at infinity,  we have:

\begin{lem} \label{lem:compact} The moduli space of holomorphic polygons in $Y_{\Phi}$ with Lagrangian boundary conditions belonging to a compact subset (e.g. to a given finite set of closed Lagrangian submanifolds) is compact.
\end{lem}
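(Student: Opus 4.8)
The plan is to deduce the statement from Gromov compactness on the projective compactification $\bar{Y}$; the only genuine work is to confine the polygons to a fixed compact subset of $Y_{\Phi}$, so that no area escapes to the fibrewise infinity. I would split this into a base estimate, a fibre estimate via a maximum principle, and a concluding bubbling analysis. First I would control the projection: for $J \in \scrJ_{\pi}$ the map $\pi$ is holomorphic, so for any polygon $u$ the composite $\pi \circ u$ is a holomorphic map from the disc domain into the compact surface $S$ whose boundary arcs map into the one-dimensional cellulation $\Delta_m^{\vee}$. By the open mapping theorem its image lies in the compact union of faces of $\Delta_m^{\vee}$ bounded by those arcs, so the base projection is uniformly confined.

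The substantive point is to prevent $u$ from escaping to infinity in the fibre direction, and this is where I expect the main obstacle to lie. I would exploit the relative ampleness of the boundary divisor $\partial\bar{Y} = \bar{Y} \setminus Y_{\Phi}$: choosing a Hermitian metric on $\mathcal{O}_{\bar{Y}}(\partial\bar{Y})$ whose curvature is fibrewise positive near the boundary, and adding a small multiple of $p^{*}$(a K\"ahler form on $S$), the function $\psi = -\log\|s\|^2$, with $s$ the section cutting out $\partial\bar{Y}$, is a proper exhaustion of $Y_{\Phi}$ that is $I$-plurisubharmonic outside a compact set. I would then enlarge that compact set to a set $K_0$ containing all the chosen (compact) Lagrangians together with the compact locus where $J \neq I$. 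On $Y_{\Phi} \setminus K_0$ one has $J = I$, so $\psi$ is $J$-plurisubharmonic there and $\psi \circ u$ is subharmonic on $u^{-1}(Y_{\Phi} \setminus K_0)$. The topological boundary of this open subset of the domain maps either onto the Lagrangians, which lie in $K_0$, or onto $\partial K_0$, so the maximum principle forces $\psi \circ u \leq \sup_{\partial K_0} \psi$; this confines every polygon to the fixed compact set $K_1 = \{\psi \leq \sup_{\partial K_0}\psi\}$, independently of $u$.

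With a uniform $C^0$-bound in hand, the symplectic areas are bounded, since they are determined by the homotopy class and the fixed corners, and I would extend $J$ to $\bar{J}$ on $\bar{Y}$ (possible because $J = I$ near $\partial\bar{Y}$) and invoke Gromov compactness for the compact target $\bar{Y}$. It then remains to rule out degenerations that touch infinity. Because the confined curves stay in $K_1 \subset Y_{\Phi}$ away from $\partial\bar{Y}$, no boundary component of a limit can lie in the divisor; sphere bubbles are excluded because any rational curve in $\bar{Y}$ lies in a fibre of $p$ and hence meets $\partial\bar{Y}$, while the affine fibres of $\pi$ contain no rational curves at all; disc bubbling off the fixed compact Lagrangians is the usual stable-limit phenomenon and does not disturb compactness. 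The delicate step is the construction in the middle paragraph: arranging a genuinely plurisubharmonic exhaustion that behaves well across the affine modifications over $D$ and the reducible fibres, and checking that the maximum principle is insensitive to the non-integrability of $J$ on the interior. This last point is exactly why one insists that $J = I$ outside a compact set and appeals to the relative ampleness of the boundary to supply the needed positivity near infinity.
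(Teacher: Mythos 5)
Your template --- confine the polygons to a fixed compact subset of $Y_{\Phi}$ by a maximum principle, then apply Gromov compactness in $\bar{Y}$ --- is a legitimate one, and your first paragraph (controlling $\pi\circ u$ via the open mapping theorem) is correct; but the step you yourself flag as delicate is where the argument has a genuine gap. To run the maximum principle you need $\psi=-\log\|s\|^2$ to be plurisubharmonic outside a compact set, which requires a Hermitian metric on $\mathcal{O}_{\bar{Y}}(Bd(Y))$ with semipositive curvature near the boundary. The paper only establishes that the boundary is relatively \emph{ample on the singular compactification} and hence relatively \emph{nef on the resolution} $\bar{Y}$: after resolving, $Bd(Y)$ contains the exceptional divisors and is in general no longer relatively ample, and nefness is a purely numerical condition which does \emph{not} guarantee the existence of any metric with semipositive curvature. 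Adding $p^*$ of a K\"ahler form on $S$ does not repair this, since curves contained in the boundary and lying in fibres of $p$ can have degree zero against both summands. The remark immediately preceding the lemma --- that $Y_{\Phi}$ is ``not manifestly of contact type at infinity'' and that parallel transport only has polynomial growth --- is precisely a warning that a convexity-at-infinity argument is not readily available for these threefolds.

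The paper's proof sidesteps the metric question entirely and stays homological, which is exactly why relative nefness suffices. One applies Gromov compactness directly in $\bar{Y}$ to a sequence $u_j$ with $u_j\cdot Bd(Y)=0$ and shows the limit $u_{\infty}$ cannot touch the boundary: a disc component cannot lie in $Bd(Y)$ because its boundary lies on the fixed compact Lagrangians, so any component meeting $Bd(Y)$ without being contained in it contributes strictly positively by positivity of intersection, while rational components contained in $Bd(Y)$ contribute nonnegatively by relative nefness; hence $u_{\infty}\cdot Bd(Y)>0$, contradicting conservation of the intersection number in the limit. Your closing bubbling analysis gestures at some of these ingredients, but only as a supplement to a confinement you have not actually secured. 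To fix your write-up, either replace the metric-positivity input by this intersection-theoretic argument, or construct an exhaustion by hand from the fibrewise affine structure of $Y_{\Phi}\subset\mathcal{W}$ --- which would require controlling the curvature of $\mathcal{W}$ and is considerably more work than the paper's argument.
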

\begin{proof} This follows by considering intersections with the singular divisor $Bd(Y) \subset \bar{Y}_{\Phi}$ at infinity.    More precisely, given a sequence $u_j$ of holomorphic discs with boundary conditions in a compact subspace and converging to a stable map $u_{\infty}$, if $u_{\infty}$ does not have image contained in $Y_\Phi$ it must have either a disc component or a rational curve component which meets $Bd(Y)$ but is not wholly contained in the boundary.  Such components meet $Bd(Y)$ strictly positively by positivity of intersection, and relative nefness of $Bd(Y)$ on rational curves (in particular on components contained in the boundary) shows that $u_{\infty} \cdot Bd(Y) > 0$. This contradicts $u_j \cdot Bd(Y) = 0$ for finite $j$. \end{proof}

The Lagrangians we consider are tautologically unobstructed in $Y_{\Phi}$ for almost complex structures making projection $Y_{\Phi} \to S$ holomorphic. Given this, and with compactness from Lemma \ref{lem:compact}, a version of the Fukaya category $\scrF(Y)$ containing Lagrangian matching and tripod spheres can be constructed following the methods of \cite{Seidel:FCPLT}, but working over a Novikov field to take account of convergence issues for holomorphic polygons.  Since all the Lagrangians we consider are spin, and indeed relatively spin for any background class $b \in H^2(Y;\bZ/2)$ supported on the reducible fibres and hence disjoint from $\Gamma(\Delta_m^{\vee})$, we may define $\scrF(Y;b)$ over $\Lambda_{\bC}$.

\subsection{Holomorphic triangles}

Once $m>2$ there are pairs of Lagrangian spheres in the $A_m$-Milnor fibre which are disjoint.  Nonetheless: 

\begin{lem}\label{lem:constant_triangle}
At any vertex $b$ of  $\Delta_m^{\vee}$, the three adjacent Lagrangian spheres  $L_u, L_v, L_w$ meet pairwise transversely at a single point $\hat{b}$ of the corresponding fibre of $Y_{\Phi}$. The constant holomorphic triangle to $\hat{b}$  is regular and contributes to the product $HF(L_v, L_w) \otimes HF(L_u, L_v) \to HF(L_u, L_w)$ (where $L_u, L_v, L_w$ project to arcs ordered clockwise locally at $b$; all three Floer groups are $\bK$).
\end{lem}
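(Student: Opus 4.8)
The plan is to reduce the entire statement to the standard local model of a Lefschetz fibration near the node $\hat{b}$, and from there to a decoupled linear Riemann--Hilbert problem. First I would identify $\hat b$. The vertex $b$ of $\Delta_m^{\vee}$ is a Lefschetz critical value of $\pi\colon Y_{\Phi}\to S$, so the fibre $Y_b$ has a single node $\hat b$ at which the total space $Y_{\Phi}$ is itself smooth; near $\hat b$ the fibration is modelled on
\[
\pi\colon \{z_1^2+z_2^2+z_3^2 = w\}\subset \bC^4 \longrightarrow \bC,\qquad (z,w)\mapsto w,
\]
with $\hat b$ the origin. Each of the three Lagrangians $L_u,L_v,L_w$ adjacent to $b$ is a leg of a matching or tripod sphere having a \emph{foot} at $b$; by the matching/tripod condition (Lemmas \ref{lem:tripod_vs_matching_sphere} and \ref{lem:delta_m_controls_singular_fibres}) its fibrewise vanishing cycle limits to the node of $Y_b$ as one approaches $b$. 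Hence near $\hat b$ each $L_i$ coincides with the Lefschetz thimble over a ray in the base emanating from $b$ in the direction $\theta_i$ of the corresponding edge of $\Delta_m^{\vee}$, namely $\{z = e^{i\theta_i/2}x : x\in\bR^3\}$, on which $\pi(z)=e^{i\theta_i}|x|^2$.

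Next I would establish transversality and the single intersection point. The tangent space is $T_{\hat b}L_i = e^{i\theta_i/2}\bR^3\subset\bC^3$, a Lagrangian $3$-plane, and two such planes meet only at the origin precisely when $\theta_i\neq\theta_j\bmod 2\pi$. Since the three edges of the black triangle dual to $b$ leave $b$ in three distinct directions (separated by roughly $2\pi/3$), the planes are pairwise transverse, so $L_u,L_v,L_w$ meet pairwise transversely at $\hat b$. That $\hat b$ is the \emph{only} intersection point of each pair follows from the fibred structure: $L_i\cap L_j$ projects into the intersection of the two legs in the base, which is $\{b\}$, and over $b$ each $L_i$ is the single point $\hat b$. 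Consequently each complex $CF^*(L_i,L_j)$ is generated by $\hat b$ alone, so all three Floer groups are $\bK$.

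Then I would grade and count. Fixing the global grading on $Y_{\Phi}$, the degree of $\hat b$ in $CF^*(L_i,L_j)$ is computed from the phase angles of the thimble planes; with the clockwise ordering of the arcs at $b$ and the quiver orientation of Figure \ref{Fig:inscribed}, the generators of $HF^*(L_u,L_v)$ and $HF^*(L_v,L_w)$ sit in degree $1$ (the arrows $u\to v$ and $v\to w$) and that of $HF^*(L_u,L_w)$ in degree $2$ (dual, under the $CY_3$ pairing, to the arrow $w\to u$ completing the black $3$-cycle $t_b$), so $\mu^2$ respects the grading. Regularity of the constant triangle reduces, because all three boundary planes are diagonal rotations $e^{i\theta_i/2}\bR^3$ of the real locus, to three decoupled copies of the one-dimensional Riemann--Hilbert problem for three lines through the origin of $\bC$; there the linearised operator is surjective and the constant triangle is rigid of index $0$, which is the standard holomorphic-triangle computation (cf. \cite{Seidel:FCPLT}). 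Finally, the constant triangle is the unique triangle in its homotopy class: any other triangle with all corners at $\hat b$ would project under $\pi$ to a non-constant holomorphic triangle in $S$ with boundary on the three legs and all vertices at $b$, hence of strictly positive area by the open mapping theorem, contributing only higher-order Novikov terms. Therefore the constant triangle is a regular, isolated point of the moduli space contributing $\pm 1$, and the structure coefficient is non-zero.

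The main obstacle is the grading and sign bookkeeping: one must verify that the global grading on $Y_{\Phi}$ really assigns the thimble intersections the degrees $(1,1,2)$ dictated by the quiver, so that the constant triangle has index $0$ and lands in the intended summand. The transversality, the single-intersection statement, and the regularity all drop out of the explicit local model, but matching the Maslov data to the combinatorial orientation of $Q(\Delta_m)$ — and fixing the orientation signs via the chosen relative spin structure determined by $b\in H^2(Y;\bZ/2)$ — is where the genuine care is required.
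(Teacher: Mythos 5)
Your proposal is correct and follows essentially the same route as the paper's proof: identify the three Lagrangians near the critical point with Lefschetz thimbles over the three edge directions, reduce to three transverse linear Lagrangian planes modelled on products of three real lines through the origin in $\bC$, and invoke the standard regularity of the constant triangle for the correct cyclic (clockwise) ordering. The paper simply cites \cite[Lemma 4.9]{Smith:quiver} for that last regularity step where you spell out the decoupled one-dimensional Riemann--Hilbert problem, and your additional remarks on gradings and on non-constant triangles having positive area are consistent elaborations rather than a different argument.
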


\begin{proof}
There is a unique Lefschetz singularity in the $A_m$-fibre lying over a vertex of $\Delta_m^{\vee}$, and locally the three Lagrangians are given by different Lefschetz thimbles near that point. (The isotopy in the construction of the matching spheres which makes them only essentially fibred can be taken to be supported away from the common critical end-point.)  It follows that they meet pairwise transversely, and indeed can be locally described by three linear Lagrangian subspaces in $\bC^n$. Such a triple of Lagrangian subspaces meeting at a point can be modelled on a product of copies of three real lines in $\bC$ meeting at the origin.  Regularity of the constant map for the correct cyclic order is standard, see \cite[Lemma 4.9]{Smith:quiver}.
\end{proof}

For $m>2$ there are internal `white' triangles in the quiver $Q(\Delta_m)$, i.e. primitive 3-cycles of the form $q_w$, which contribute regions to the Lagrangian cellulation $\Delta_m^{\vee}$ which have three tripod Lagrangian boundary components (cf. the internal `hexagons' in Figure \ref{Fig:a3_lag_cellulation},  note these have only three geometrically distinct Lagrangian boundaries).

\begin{lem} \label{lem:more} Suppose $m>2$. 
Consider a primitive 3-cycle $q_w$ with tripod Lagrangian boundaries $L_u, L_v, L_w$ in cyclic order. The Floer product $HF^1(L_v, L_w) \otimes HF^1(L_u,L_v) \to HF^2(L_u,L_w)$ is non-zero. 
\end{lem}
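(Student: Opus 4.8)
The plan is to reduce the product to a count of $\scrJ_{\pi}$-holomorphic triangles fibred over the base, and then to pin that count down to a single regular triangle by reading off the fibre data of the three tripods. First I would fix an almost complex structure $J \in \scrJ_{\pi}$, so that any triangle $u$ contributing to the product projects under $\pi$ to a holomorphic map from a disc with three boundary punctures into $S$, with boundary on the three projected arcs bounding $q_w$ and with its three corners at the vertices of $q_w$ (these are vertices of $\Delta_m^{\vee}$, hence locations of Lefschetz fibres). By the open mapping theorem, and since the three arcs bound a topological triangle whose corners have interior angle less than $\pi$, the projected map $\pi \circ u$ is forced onto $\overline{q_w}$ and, up to reparametrisation, is the Riemann biholomorphism onto the region $q_w$; in particular it is an embedding of strictly positive area. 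Disc and sphere bubbling is excluded by Lemma \ref{lem:compact} together with exactness of the fibres, unobstructedness of the Lagrangians, and $g(\bS)>0$ (which forces rational curves into the exact fibres). Thus the only contributions come from genuine triangles projecting onto $q_w$.

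The key geometric input, which I would establish next, is a description of the fibre data over the edges of $q_w$. By Lemma \ref{lem:tripod_vs_matching_sphere} (right image of Figure \ref{Fig:tripod_paths}) and the symmetry exploited in Figures \ref{Fig:more_tripod_paths} and \ref{Fig:tripod_cycle}, the three legs of $L_u, L_v, L_w$ lying over the three edges of $q_w$ carry, as matching data, the vanishing cycles of a triple $a,\, b,\, \tau_a^{-1}(b)$ in the $A_m$-fibre, where $a,b$ meet transversely once and $\tau_a^{-1}(b)=\tau_b(a)$, and the configuration closes up exactly as in Figure \ref{Fig:tripod_cycle}. At each corner of $q_w$ the two adjacent vanishing cycles meet transversely in a single point; this is precisely the local three-thimble picture of Lemma \ref{lem:constant_triangle}. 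The white-triangle product therefore becomes a count of holomorphic triangles in the Lefschetz fibration over $\overline{q_w}$ with matching-sphere boundary conditions whose vanishing cycles close up.

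With this reduction in hand, I would count the lifted triangles. Over the interior of $q_w$ there are no critical fibres, so the fibration there is a symplectically trivial $A_m$-bundle, and the vertical data of the triangle is governed entirely by the triple $a,b,\tau_a^{-1}(b)$ together with the constant-triangle regularity at each corner. This is the higher-rank analogue of the rank-one triangle count of \cite{Smith:quiver}: the unique base triangle lifts to a single rigid holomorphic triangle, regular by the same fibred transversality used for Lemma \ref{lem:constant_triangle} (base regularity together with an injective linearised operator in the fibre directions near each thimble corner). A single regular rigid triangle contributes a unit multiple of the Novikov monomial recording $\mathrm{area}(q_w)>0$, and since the first paragraph shows it is the \emph{only} contribution there is no cancellation; hence the product $HF^1(L_v,L_w)\otimes HF^1(L_u,L_v)\to HF^2(L_u,L_w)$ is non-zero.

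The main obstacle will be the regularity and uniqueness of the lifted triangle in the total space, precisely because the tripods are only \emph{essentially} fibred and are fattened near the very Lefschetz vertices that serve as the triangle's corners. I would handle this exactly as in the matching-sphere theory of \cite{Seidel:FCPLT}: arrange the essentially-fibred isotopy to be supported away from the common thimble end-points (as is already possible in Lemma \ref{lem:constant_triangle}), so that the local model at each corner is genuinely three linear Lagrangian subspaces meeting at a point, and then invoke the standard gluing and relative-invariant machinery to conclude that the fibred triangle over $q_w$ is cut out transversely and counted with multiplicity one.
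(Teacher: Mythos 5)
Your overall strategy --- project contributing triangles to the base via $\scrJ_{\pi}$, pin down their image, then count lifts --- has the right flavour, but the decisive step is asserted rather than proved. The claim that ``the unique base triangle lifts to a single rigid holomorphic triangle'' is exactly the content of the lemma: the number of $\scrJ_{\pi}$-holomorphic sections of an $A_m$-fibration over a polygonal region with vanishing-cycle boundary conditions is a relative invariant which computes a Floer product between the vanishing cycles in the fibre, and there is no a priori reason for that count to be $\pm 1$, nor for several lifts not to cancel. Appealing to ``the same fibred transversality used for Lemma \ref{lem:constant_triangle}'' conflates regularity of a given solution with determination of the algebraic count. A secondary problem is that the region bounded by $L_u, L_v, L_w$ is not a triangle with three thimble corners: each tripod contributes two legs, so the region is a hexagon, three of whose vertices are the tripod centres, where the spheres are only \emph{essentially} fibred (fattened near the vertex); there the projected boundary condition is two-dimensional and the open mapping argument does not by itself force the projection onto a fixed region, nor onto the biholomorphism you describe.

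The paper's proof avoids both issues by a different mechanism. It first reduces to $m=3$ (only four sheets of the spectral cover are involved), then uses Lemma \ref{lem:tripod_vs_matching_sphere} to replace the three tripods by Hamiltonian-isotopic matching spheres fibred over paths that all pass through a single interior point of the white region (Figure \ref{Fig:force_constant_triangle}); the isotopies are through weakly exact Lagrangians, so there is no wall-crossing and the Floer product is unchanged. The three spheres then intersect only in the smooth $A_3$-fibre over that central point, and the rank-one pairwise Floer cohomologies, combined with the Khovanov--Seidel classification of matching spheres in $A_k$-Milnor fibres, force the three vanishing $2$-spheres to be fibred over paths sharing endpoints pairwise, i.e.\ to meet geometrically once. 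This places the computation in the setting of the regular constant triangle of Lemma \ref{lem:constant_triangle}, whose count is $\pm 1$ by the standard local model of three linear Lagrangian subspaces. To salvage your direct approach you would need to supply the fibre-level argument that your candidate section is the only one and contributes with sign $\pm 1$ --- which is essentially what the deformation to concurrent matching paths accomplishes.
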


\begin{proof} Since only four sheets of the spectral cover are involved in the geometry of Figure \ref{Fig:a3_lag_cellulation}, it suffices to consider the case $m=3$. By Lemma \ref{lem:tripod_vs_matching_sphere}, the three tripod spheres can be replaced by matching spheres for the local structure of $Y_{\Phi}$ as an $A_3$-fibred Lefschetz fibration.  The matching paths can moreover be taken to meet at a unique point,  the centre of the white triangle, see Figure  \ref{Fig:force_constant_triangle}. Then  the corresponding 3-spheres meet only in the fibre over that point (more precisely, this is true after a symplectomorphism of an exact subdomain containing the given triple of spheres, after which they can be taken to be exactly fibred over arcs in the base).   The Hamiltonian isotopies from tripods to matching spheres induce quasi-isomorphisms of the corresponding objects in the Fukaya category, and one can arrange that there is no wall-crossing since the isotopies are through weakly exact Lagrangians. We need to determine the vanishing 2-spheres in the smooth $A_3$-fibre $F$ over the central point in Figure \ref{Fig:force_constant_triangle}.  

From the original configuration of the tripods, these 2-spheres meet pairwise with rank one Floer cohomology. By \cite{Khovanov-Seidel} this is only possible for matching spheres in $A_k$ if the spheres are fibred over paths which pairwise share a single end-point (so meet geometrically once).   This reduces us to the geometry which entered into the discussion of the constant triangle in Lemma \ref{lem:constant_triangle}.
\end{proof}

\begin{figure}
\begin{center}
\begin{tikzpicture}[scale=1.25]

\newcommand{\midarrow}{}

\draw[semithick,dashed] (0,0) -- (4,0);
\draw[semithick,dashed] (0,0) -- ({4*cos(60)}, {4*sin(60)});
\draw[semithick,dashed] ({4*cos 60}, {4*sin 60}) -- (4,0);

\draw[semithick,dashed] (1,0) -- node {\midarrow} ({1+cos 60}, {sin 60});
\draw[semithick,dashed] ({1+cos 60}, {sin 60}) -- node {\midarrow} ({cos 60}, {sin 60});
\draw[semithick,dashed] ({cos 60}, {sin 60}) -- node {\midarrow} (1,0);

\draw[semithick,dashed] (2,0) -- node {\midarrow} ({2+cos 60}, {sin 60});
\draw[semithick,dashed] ({2+cos 60}, {sin 60}) -- node {\midarrow} ({1+cos 60}, {sin 60});
\draw[semithick,dashed] ({1+cos 60}, {sin 60}) -- node {\midarrow} (2,0);

\draw[semithick,dashed] (3,0) -- node {\midarrow} ({3+cos 60}, {sin 60});
\draw[semithick,dashed] ({3+cos 60}, {sin 60}) -- node {\midarrow} ({2+cos 60}, {sin 60});
\draw[semithick,dashed] ({2+cos 60}, {sin 60}) -- node {\midarrow} (3,0);


\draw[semithick,dashed] ({1+cos 60}, {sin 60}) -- node {\midarrow} ({1+2*cos 60}, {2*sin 60});
\draw[semithick,dashed] ({1+2*cos 60}, {2*sin 60}) -- node {\midarrow} ({2*cos 60}, {2*sin 60});
\draw[semithick,dashed] ({2*cos 60}, {2*sin 60}) -- node {\midarrow} ({1+cos 60},{sin 60});

\draw[semithick,dashed] ({2+cos 60}, {sin 60}) -- node {\midarrow} ({2+2*cos 60}, {2*sin 60});
\draw[semithick,dashed] ({2+2*cos 60}, {2*sin 60}) -- node {\midarrow} ({1+2*cos 60}, {2*sin 60});
\draw[semithick,dashed] ({1+2*cos 60}, {2*sin 60}) -- node {\midarrow} ({2+cos 60},{sin 60});

\draw[semithick,dashed] (2, {2*sin 60}) -- node {\midarrow} ({2+cos 60}, {3*sin 60});
\draw[semithick,dashed] ({2+cos 60}, {3*sin 60}) -- node {\midarrow} ({1+cos 60}, {3*sin 60});
\draw[semithick,dashed] ({1+cos 60}, {3*sin 60}) -- node {\midarrow} ({2},{2*sin 60});

\draw[fill,blue] (1,{0.5*sin 60}) circle (0.05);
\draw[fill,blue] (2,{0.5*sin 60}) circle (0.05);
\draw[fill,blue] (3,{0.5*sin 60}) circle (0.05);
\draw[fill,blue] (1.5,{1.5*sin 60}) circle (0.05);
\draw[fill,blue] (2.5,{1.5*sin 60}) circle (0.05);
\draw[fill,blue] (2,{2.5*sin 60}) circle (0.05);

\draw[fill,red] (2,{1.25*sin 60}) circle (0.03);

\draw[semithick, blue] (1,{0.5*sin 60}) -- (1.5,{sin 60});
\draw[semithick, blue] ({2, 0.5*sin 60}) -- (1.5,{sin 60});
\draw[semithick, blue] ({1.5,1.5*sin 60}) -- (1.5,{sin 60});

\draw[semithick, blue] (2,{0.5*sin 60}) -- (2.5,{sin 60});
\draw[semithick, blue] ({3, 0.5*sin 60}) -- (2.5,{sin 60});
\draw[semithick, blue] ({2.5,1.5*sin 60}) -- (2.5,{sin 60});

\draw[semithick, blue] (1.5,{1.5*sin 60}) -- (2,{2*sin 60});
\draw[semithick, blue] ({2.5, 1.5*sin 60}) -- (2,{2*sin 60});
\draw[semithick, blue] ({2,2.5*sin 60}) -- (2,{2*sin 60});

\draw[semithick, red, rounded corners] (1, {0.5*sin 60}) -- (1,1.25) -- (1.5,1.65) -- (2,1.25) -- (2,{0.5*sin 60});

\draw[semithick, red, rounded corners] (1.5,{1.5*sin 60}) -- (2, {1.25*sin 60}) -- (3,{1.25*sin 60}) -- (2.5,{2.25*sin 60}) -- (2,{2.5*sin 60});

\draw[semithick, red, rounded corners] (2.5, {1.5*sin 60}) -- (2,{1.25*sin 60}) -- (1.5,{0.25*sin 60}) -- (2.5, {0.25*sin 60}) -- (3,{0.5*sin 60});

\end{tikzpicture}
\caption{Deforming a triple of tripod spheres (blue) to matching spheres (red) which meet at a point, defining a constant holomorphic triangle\label{Fig:force_constant_triangle}}
\end{center}
\end{figure}
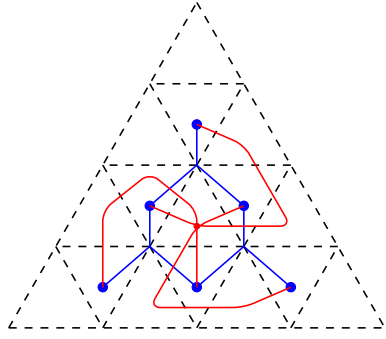

\subsection{Geometry near the reducible fibre}

We briefly recall the geometry near the reducible fibre.  After deformation, there is a local model for $Y$
\begin{equation} \label{eqn:reducible_local_model}
\{\delta(a^2 + c^2) + \prod_{j=1}^m (b-j) = 0\} \subset \bC^3\times \bC_{\delta}
\end{equation}
in which the projection $Y\to S$ is modelled on projection to the $\delta$-plane. The general fibres are type $A_m$-surfaces $b^m + O(b^{m-1}) + (\mathrm{const.})(a^2+c^2) = 0$, whilst the fibre over $\delta =0$ is given by the $m$-tuple of planes $\{b=j\} \times \bC^2_{a,c}$, for $j \in \{1,\ldots, m\}$. There is a Lagrangian boundary condition
\[
\delta = e^{i\theta}, \ a,c \in e^{-i\theta/2}\cdot (-1)^m i \cdot \bR, \ b \in [j,j+1]
\]
for each $1\leq j\leq m-1$, defining a totally real $S^1\times S^2$ lying over the unit circle in the $\delta$-plane. One can deform the standard symplectic structure in a neighbourhood of this totally real submanifold to make it Lagrangian, and fixed by an antiholomorphic involution, cf. \cite[Section 4.6]{Smith:quiver}.  The only holomorphic discs with boundary on this Lagrangian cylinder are given by (multiple covers of) the constant sections over the unit disc in the $\delta$-plane:
\begin{equation} \label{eqn:two_discs}
u: (D, \partial D) \to (Y, (S^1\times S^2)_j), \quad u(z) = (0,\ast,0, z), \ \ast \in \{j,j+1\}.
\end{equation}

There is another viewpoint which can be helpful. There is a unitary local change of co-ordinates which transforms the  local model \eqref{eqn:reducible_local_model} to the form
\[
\{\delta\cdot uv + \prod_j (b-j) = 0\} \subset \bC^4
\] 
and one can consider projection to the $b$-plane. The generic fibre is now $\{\delta u v = \mathrm{const}\} \subset \bC^3$, which is a copy of $(\bC^*)^2$; the fibres over $b=j$ are isomorphic to the union of the three co-ordinate planes $\delta u v = 0 \subset \bC^3$. The local model of the map $xyz: \bC^3\to \bC$ has been studied extensively  in \cite{AAK}. Again consider the matching path $[j,j+1] \subset \bR$ between two critical values of the projection.  One can parallel transport the Lagrangian $T^2 \subset T^*T^2 = (\bC^*)^2$ along this path, to obtain a Lagrangian $S^1\times S^2$ which is another model for that considered above. The two holomorphic discs with boundary on the Lagrangian now lie entirely over the end-points: the Lagrangian meets the fibre over $j$ in the unit circle in the $\delta$-plane, and bounds the obvious disc lying entirely in the singular locus of the $j$-fibre.  Note that from the second viewpoint, there are three Lagrangian $(S^1\times S^2)$'s associated to $[j,j+1]$, given the symmetry in the co-ordinates $\delta,u,v$; only one of these is fibred with respect to the $\delta$-plane projection.

The second viewpoint makes it especially clear that the holomorphic discs with boundary on $S^1\times S^2$ have vanishing Maslov class, by comparing to the toric model $xyz: \bC^3 \to \bC$. 

\begin{lem} Given a choice of spin structure on $L_j \cong S^1\times S^2$ and hence orientation of the moduli space of rigid discs with boundary on $L_j$, the two holomorphic discs from \eqref{eqn:two_discs} have opposite sign.
\end{lem}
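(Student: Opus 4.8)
The plan is to make the sign comparison entirely local and then to identify a symmetry that exchanges the two discs. First I would pass to the second, toric-type, viewpoint of \eqref{eqn:reducible_local_model}, namely $\{\delta\, uv + \prod_j(b-j)=0\}$ with projection to the $b$-plane, in which the two discs of \eqref{eqn:two_discs} lie entirely in the singular fibres over the endpoints $b=j$ and $b=j+1$ of the matching interval $[j,j+1]$. In each such fibre $\{\delta uv = 0\}\subset\bC^3$ the disc fills the unit circle of the $\delta$-plane inside a single coordinate plane, so the geometry matches the toric Landau--Ginzburg model $xyz:\bC^3\to\bC$ of \cite{AAK}, where the linearized $\bar\partial$-operators and the orientations they carry are explicit. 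Both discs have vanishing Maslov class, and the dimension count $n+\mu-3 = 0$ (with $n=3$, $\mu=0$) confirms they are the two rigid unparametrized discs whose signed contributions we must compare. Each disc is moreover fixed by the antiholomorphic involution with fixed locus $L_j$, so it is a \emph{real} disc and its determinant line is canonically oriented once the spin structure on $L_j\cong S^1\times S^2$ is fixed (following the conventions of \cite{Seidel:FCPLT}).

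The mechanism I would use for the sign flip is the following. The two boundary loops are the circles $S^1\times\{N\}$ and $S^1\times\{S\}$ over the two poles of the $S^2$-factor (the vanishing points $b=j$ and $b=j+1$ of the matching sphere). As free loops they are homotopic, and they cobound the cylinder $C = S^1\times(\text{meridian arc from }N\text{ to }S)\subset L_j$. Since the Maslov contributions of the two discs are equal (both zero), the relative sign is governed purely by the transport of the chosen spin/relative-spin structure along $C$, together with the canonical real orientations. So the whole problem reduces to evaluating this transport, which I would compute in the $xyz$-model of \cite{AAK}.

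The concrete computation is the crux. The natural tool is the reflection $R\colon b\mapsto (2j+1)-b$ of the base, combined with the identity in the remaining coordinates: it exchanges the two fibres over $b=j,j+1$, hence the two discs, and it preserves $L_j$, fixing the $S^1$-factor and reflecting the $S^2$-factor through its equator. Equivalently, in the toric picture the two discs occupy two different coordinate branches of the singular fibre $\{\delta uv=0\}$ with opposite induced coorientations, which is exactly the mechanism by which the disc potential of such a Lagrangian cancels. The point to verify is that $R$ reverses the induced orientation on the $S^2$-factor while preserving the $S^1$-factor and the spin structure, so that it acts by $-1$ on the comparison of the two real orientations; this gives $\mathrm{sign}(u_j) = -\,\mathrm{sign}(u_{j+1})$. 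This is the higher-rank incarnation of the $m=1$ cancellation in \cite[Section 4.6]{Smith:quiver}.

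The main obstacle is precisely this orientation bookkeeping: showing that the exchange is orientation-\emph{reversing} rather than orientation-preserving, since that is the entire content of `opposite' versus `equal' sign. This amounts to a finite linear-algebra computation of the action on the determinant line of the linearized operator and on the boundary spin data, carried out in a fixed local trivialization. A subtlety is that $R$ is only an honest symmetry of the model after discarding the factors $(b-l)$ for $l\neq j,j+1$, so one must check that these factors do not affect the linearization along the two discs; this holds because each disc sits at a fixed value $b\in\{j,j+1\}$ where those factors are nonzero and holomorphic, hence contribute trivially to the relevant orientation data. Once this is confirmed, the comparison collapses to the explicit model computation in \cite{AAK}, and the opposite signs follow.
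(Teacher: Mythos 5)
Your overall strategy -- localize to the model $\{\delta uv+\prod_l(b-l)=0\}$, exhibit a symmetry exchanging the two discs, and extract the sign from the fact that this symmetry is orientation-reversing on the $S^2$-factor of $L_j$ -- is the right one, and it is in the spirit of what the paper actually does, namely observe that the problem is local and invoke the rank-one computation of \cite[Section 4.6]{Smith:quiver} verbatim. But your write-up stops exactly where the content of the lemma begins. The lemma asserts nothing except a sign, and the step that produces that sign -- ``$R$ reverses the induced orientation on the $S^2$-factor while preserving the $S^1$-factor and the spin structure, so that it acts by $-1$ on the comparison of the two real orientations'' -- is announced as ``the point to verify'' and never verified. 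It is also not a priori well-posed as stated: $R|_{L_j}$ is orientation-reversing on $L_j$, and a spin structure is attached to an \emph{oriented} bundle, so ``preserving the spin structure'' requires an argument (one must compare the spin-induced orientations of $\det D_{u_j}$ and $\det D_{u_{j+1}}$ through the FOOO/Seidel degeneration, where the $\det(T_{u(p_0)}L)$ factor picks up the $-1$ from $dR|_L$ but the ``cap'' factor built from the loop of Lagrangian tangent planes could in principle contribute a second, cancelling, $-1$). Since the two discs lie in relative classes differing by the $2$-sphere class $[Z]$ obtained by capping with the cylinder $C$, exactly this kind of extra sign is what the background class $b$ of \eqref{eqn:background} is later used to switch on; so the dichotomy ``same sign vs.\ opposite sign'' is genuinely delicate here and cannot be read off from the orientation-reversal of $L_j$ alone. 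Deferring the residual computation to \cite{AAK} does not close this: that reference analyses the local model $xyz\colon\bC^3\to\bC$ and its Lagrangian tori, but does not carry out the determinant-line comparison for the $S^1\times S^2$ matching object that you need.

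Two smaller points. First, the claim that ``the two discs occupy two different coordinate branches of the singular fibre $\{\delta uv=0\}$ with opposite induced coorientations'' is not accurate in the toric picture: both discs lie along the same stratum $\{u=v=0\}$ of their respective singular fibres; it is only in the $\delta$-projection picture that they meet different irreducible components of the reducible fibre of $Y_\Phi\to S$, and that distinction governs the intersection with $Z_b$, not the untwisted sign. Second, your reduction of the auxiliary factors $(b-l)$, $l\neq j,j+1$, to nonvanishing holomorphic constants along each disc is fine, but it shows only that the two \emph{local} linearized problems are isomorphic; it does not by itself produce the global involution $R$ of $(Y_\Phi,L_j)$ that your sign argument needs, so the comparison should be phrased as an identification of the two local boundary-value problems followed by the orientation transport along $C$, rather than as conjugation by an honest automorphism.
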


\begin{proof} The geometry is local near the given $S^1\times S^2$, and the argument from \cite[Section 4.6]{Smith:quiver} applies.
\end{proof}

\begin{defn} \label{defn:eigen} An \emph{eigen-ordering} of a generic tuple $\Phi$ is a choice of ordering of the roots of $\Phi(b) = 0$ near each point $p\in D$.  One can equivalently think of an eigen-ordering as giving an ordering of the irreducible components of the fibre $(Y_{\Phi})_p$ over each point $p\in D \subset S$.\end{defn}

The space of eigen-ordered generic tuples is an unramified $\Sym_m^{\times d}$ cover of an open subset of the Hitchin base.  In analogy with \cite{BridgelandSmith}, one expects eigen-ordered generic tuples to define stability conditions on the category $\scrC$. 

Note that each one of the discs meets exactly one component of the reducible fibre of $Y$ over $0 \in \bC_{\delta}$. Moreover, consideration of the branching behaviour in Figure \ref{Fig:branching_data} shows that as one varies the value $j$ when considering sections over $L_p^{(j)}$, different pairs of irreducible components of the fibre over $p \equiv 0 \in \bC_{\delta}$ meet the corresponding holomorphic discs; compare to the final co-ordinate in \eqref{eqn:two_discs}.  Given an ordering of the components of the fibres over $\bP$, there is an associated choice of cycle $Z_b$ comprising $\lceil (m+1)/2 \rceil$ of the irreducible components for which the total signed intersection number of the discs of \eqref{eqn:two_discs} with $Z_b$ is necessarily non-zero (because only one of each pair of discs hits one of the components included in $Z_b$).    Up to monodromy in the space of eigen-ordered tuples, which induces a permutation representation of the components of the reducible fibres, we can assume that this choice of cycle is just the even-indexed components as specified in the Introduction (which is a suitable cycle choice if the local geometry agrees with the labelling of sheets from Figure \ref{Fig:branching_data}).

\subsection{Holomorphic discs for other primitive cycles}

Fix an ideal triangulation $\Delta$ and the collection $\{L_v \, | \, v \in \Gamma(\Delta_m^{\vee})\}$ of Lagrangian spheres associated to the dual Lagrangian cellulation. We wish to understand the holomorphic discs which contribute to the $A_{\infty}$-structure on $\scrA_{\Gamma}$.  There are constant holomorphic triangles indexed by the primitive cycles $c_b$ of the quiver, which we have already encountered. The other two classes of primitive cycle  also give rise to holomorphic discs. 

\begin{prop} \label{prop:rigid_discs}
Fix a vertex $p \in \bP$ of $\Delta$. For each $1 \leq j \leq m$, the moduli space of rigid holomorphic discs with boundary $L_p^{(j)}$ is non-empty. Moreover, there is a choice of cycle representative for the background class $b \in H^2(Y_{\Phi};\bZ/2)$ of \eqref{eqn:background} for which the algebraic count of such discs is non-zero.
\end{prop}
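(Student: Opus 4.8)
The plan is to compute these discs by projecting to the base curve $S$ and reducing everything to the local model \eqref{eqn:reducible_local_model} near the reducible fibre over $p$. The cycle $L_p^{(j)}$ is carried by the $k$ matching spheres, $k = \mathrm{valence}(p)$, which bound the edges of $\Delta_m^\vee$ encircling $p$. First I would argue that any holomorphic polygon contributing to the $L_p^{(j)}$-term of the potential on $\scrA_\Gamma$ must project, under the holomorphic map $\pi: Y_\Phi \to S$, to a holomorphic polygon in $S$ whose boundary lies along those encircling arcs; by the open mapping theorem and the maximum principle its image is confined to the small disc $D_p \subset S$ bounded by that loop, so the whole analysis takes place inside the local model. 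Compactness of the relevant moduli space is guaranteed by Lemma \ref{lem:compact}, and unobstructedness is tautological for $\scrJ_\pi$-complex structures.

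Inside the local model I would construct the discs explicitly using the fibred structure. Passing to the coordinates $\{\delta uv + \prod_j (b-j) = 0\}$ with its $\bC^*$/$S^1$-invariant K\"ahler form as in Proposition \ref{prop:sphere_configuration}, the $k$ matching spheres glue over $\partial D_p$ into the totally real $S^1 \times S^2$ boundary condition, and the desired polygon is a section of $\pi$ over $D_p$ which is constant in the fibre $(u,v)$-directions and limits over $\delta = 0$ to a single irreducible component of $(Y_\Phi)_p$. This is precisely the local geometry of the discs \eqref{eqn:two_discs}, now threaded around the loop, each such section being determined by the component of the reducible fibre it meets. The Maslov class vanishes, by comparison with the toric model $xyz:\bC^3 \to \bC$, so after inserting the $k$ degree-one corners at the intersection points one obtains a rigid disc contributing to the length-$k$ term of the potential. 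Regularity follows from fibred transversality in the class $\scrJ_\pi$: the fibres of $\pi$ are exact and contain no rational curves, exactly as for the constant triangles of Lemma \ref{lem:constant_triangle}. This establishes non-emptiness.

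For the signed count I would track the intersections of these sections with the components of the reducible fibre. As recorded in the preceding subsection, the sections occur in pairs meeting adjacent components, and by the opposite-signs lemma each such pair contributes with cancelling orientations in the unweighted count. The background class $b$, represented by the cycle $Z_b$ of even-indexed components, weights each section by the sign $(-1)^{u \cdot Z_b}$; since exactly one section of each pair meets a component of $Z_b$, this flips the relative sign and the pair now contributes additively, so the $b$-weighted algebraic count is non-zero. The main obstacle is exactly this final step: one must verify that the exhibited sections exhaust the moduli space — that no further polygons project into $D_p$ and none escape the local model — and that the $b$-weighting produces no fresh cancellation as one traverses the whole $k$-cycle. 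Controlling the global combinatorics of which components are met around the loop, via the branching data of Figure \ref{Fig:branching_data} and the eigen-ordering of Definition \ref{defn:eigen}, is where the real work lies.
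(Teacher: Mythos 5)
Your argument is the paper's argument for $j=1$: degenerate/surger the ring of matching spheres around $p$ into the Lagrangian $S^1\times S^2$ of the local model, identify the polygons with the two sections of \eqref{eqn:two_discs}, note they carry opposite signs, and use the weighting by $Z_b$ (exactly one disc of each pair meets an even-indexed component) to prevent cancellation. That part, including the sign discussion, matches the paper.

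The gap is in the case $j\geq 2$, which you treat as if it were identical to $j=1$. Your opening claim that ``the cycle $L_p^{(j)}$ is carried by the $k$ matching spheres which bound the edges of $\Delta_m^{\vee}$ encircling $p$'' is only true for $j=1$. For $j=2$ the boundary of the relevant region alternates between matching spheres and \emph{tripod} spheres, and for $j>2$ it contains adjacent tripod spheres; moreover the Lagrangians around the $j$-th ring do not all carry the same vanishing cycle in the fibre, so they do not glue over the boundary loop into a single $S^1\times S^2$, and the region they enclose contains Lefschetz critical values of $\pi$ in addition to $p$. Your construction of the sections (constant in the fibre directions, limiting onto one component over $\delta=0$) therefore does not apply as stated. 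The paper closes this gap with a specific trick (Figure \ref{Fig:second_tier_after_isotopy}): one replaces the tripod spheres by Hamiltonian deformations which, over the region in question, become matching spheres for a polygon of paths around $p$ all labelled by the \emph{same} vanishing cycle, reducing the count to the $L_p^{(1)}$ situation (with the holomorphic sections now meeting a $j$-dependent pair of components of the reducible fibre). Without some such isotopy --- or another mechanism for handling the tripod boundary conditions and the extra critical values --- the non-emptiness claim for $j\geq 2$ is not established by your argument. Your closing caveat about ``global combinatorics'' gestures at a difficulty, but the actual missing step is this local reduction, not the bookkeeping of which components are met.
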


\begin{figure}[ht]
\begin{center}
\begin{tikzpicture}[scale =0.8]

\draw[fill] (-1.25,0) circle (0.05);
\draw[fill] (1.25,0) circle (0.05);
\draw[fill] (-2,-2) circle (0.05);
\draw[fill] (2,-2) circle (0.05);
\draw[fill] (0,-3.5) circle (0.05);
\draw (2,0.5) circle (0.05);
\draw (-2,0.5) circle (0.05);
\draw[fill] (2.75,0.5) circle (0.05);
\draw[fill] (-2.75,0.5) circle (0.05);
\draw[fill] (2,1.25) circle (0.05);
\draw[fill] (-2,1.25) circle (0.05);

\draw (2.75,-2) circle (0.05);
\draw[fill] (3.25,-2.5) circle (0.05);
\draw[fill] (3.25,-1.5) circle (0.05);
\draw (-2.75,-2) circle (0.05);
\draw[fill] (-3.25,-2.5) circle (0.05);
\draw[fill] (-3.25,-1.5) circle (0.05);

\draw (0,-4.25) circle (0.05);
\draw[fill] (-.5,-4.75) circle (0.05);
\draw[fill] (0.5,-4.75) circle (0.05);

\draw (0,-1.75) circle (0.05);

\draw[semithick] (-1.25,0) -- (1.25,0) -- (2,-2) -- (0,-3.5) -- (-2,-2) -- cycle;
\draw[semithick] (1.25,0) -- (2, 0.5);
\draw[semithick] (2,0.5) -- (2.75, 0.5);
\draw[semithick] (2,0.5) -- (2,1.25);
\draw[semithick] (-1.25,0) -- (-2, 0.5);
\draw[semithick] (-2,0.5) -- (-2.75, 0.5);
\draw[semithick] (-2,0.5) -- (-2,1.25);

\draw[semithick] (2,-2) -- (2.75,-2);
\draw[semithick] (2.75,-2) -- (3.25,-1.5);
\draw[semithick] (2.75,-2) -- (3.25,-2.5);
\draw[semithick] (-2,-2) -- (-2.75,-2);
\draw[semithick] (-2.75,-2) -- (-3.25,-1.5);
\draw[semithick] (-2.75,-2) -- (-3.25,-2.5);

\draw[semithick] (0,-4.25) -- (0,-3.5);
\draw[semithick] (0,-4.25) -- (-.5,-4.75);
\draw[semithick] (0,-4.25) -- (.5,-4.75);

\draw[semithick, red] (2.75,0.5) -- (3.25,-1.5);
\draw[semithick, red] (-2.75,0.5) -- (-3.25,-1.5);
\draw[semithick, red] (.5,-4.75) -- (3.25,-2.5);
\draw[semithick, red] (-.5,-4.75) -- (-3.25,-2.5);
\draw[semithick, red] (2,1.25) -- (-2,1.25);

\draw[thick,red,rounded corners] (2,1.25) -- (0.25,.25) -- (1,-0.8) -- (2.75,0.5);
\draw[thick,red,rounded corners] (-2,1.25) -- (-0.25,.25) -- (-1,-.8) -- (-2.75,0.5);

\draw[thick,red,rounded corners] (3.25,-1.5) -- (1,-1.5) -- (1,-2.5) -- (3.25,-2.5);
\draw[thick,red,rounded corners] (-3.25,-1.5) -- (-1,-1.5) -- (-1,-2.5) -- (-3.25,-2.5);
\draw[thick,red,rounded corners] (0.5,-4.75) -- (0.5,-2.75) -- (-0.5,-2.75) -- (-.5,-4.75);

\end{tikzpicture}
\end{center}
\caption{Reducing the disc count over $L_p^{(2)}$ to one analogous to that over $L_p^{(1)}$, compare to Figure \ref{Fig:tripod_cycle}\label{Fig:second_tier_after_isotopy}}

\end{figure}
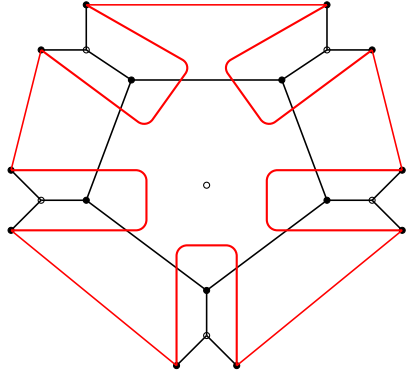

\begin{proof}
The argument for counting discs over $L_p^{(1)}$ is almost the same as in \cite{Smith:quiver}, relying on a degeneration technique to reduce to the count of discs on a Lagrangian $S^1\times S^2$, as found in \eqref{eqn:two_discs},  and the behaviour of holomorphic discs under Lagrange surgery from \cite[Chapter 10]{FO3}.  For the higher $L_p^{(j)}$, there is a trick to reduce to the computation to the previously studied case, indicated schematically in Figure \ref{Fig:second_tier_after_isotopy} in the case $j=2$. Namely, if one replaces the tripod spheres by their Hamiltonian deformations as shown in red in the figure, then one reduces to the case of a region of the base $S$ containing a single point of $\bP$ and with boundary a polygon of matching paths, each labelled by the same Lagrangian vanishing cycle in the fibre. This is exactly the situation of the disc count over $L_p^{(1)}$, except the particular components of the reducible fibre which the holomorphic sections intersect will depend on $j$, compare to the discussion at the end of the previous section.  Note that when $j>2$ (which arises only when $m>2$), the boundary configuration of $L_p^{(j)}$  involves adjacent tripod spheres, and not only alternating tripod and matching spheres.  However, this doesn't affect the argument. \end{proof}

It would be reasonable to expect that there is a holomorphic 3-form on $Y_{\Phi}$ with respect to which all the Lagrangian spheres in the configuration $\Gamma(\Delta_m^{\vee})$ admit gradings making them special of phase zero.  At least the topological analogue of this holds:

\begin{lem} 
One can grade the Lagrangians in the configuration $\Gamma(\Delta_m^{\vee})$ consistently so that all polygons in the cellulation have Maslov index zero, and the Floer algebra is concentrated in degrees $0\leq \ast \leq 3$.
\end{lem}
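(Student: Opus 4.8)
The plan is to exploit the genuine Calabi--Yau structure on $Y_{\Phi}$ to produce an honest $\bZ$-grading, and then to reduce both assertions to a single combinatorial normalisation: that every arrow of $Q(\Delta_m)$ sits in Floer degree $1$. First I would fix a nowhere-vanishing holomorphic $3$-form $\Omega$ on $Y_{\Phi}$ (available since its canonical class vanishes, cf.\ \cite{Abrikosov}); this determines a squared-phase map on the bundle of Lagrangian Grassmannians and hence a notion of graded Lagrangian. Each $L_v\in\Gamma(\Delta_m^{\vee})$ is diffeomorphic to $S^3$, so $H^1(L_v;\bZ)=0$ and its phase function lifts: every $L_v$ is gradable, and since $H^0(L_v;\bZ)=\bZ$ the grading is unique up to an overall shift $s_v\in\bZ$. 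Because this shift cancels in a self-morphism space, the algebra $HF^*(L_v,L_v)\cong H^*(S^3;\bK)$ is concentrated in degrees $0$ and $3$ independently of the choices, with the unit in degree $0$; this already supplies the two extreme degrees.

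Next I would compute the degrees of the crossings. For $v\neq v'$ with $m>2$ the spheres are either disjoint or adjacent; disjoint pairs contribute nothing, while an adjacent pair meets transversally in a single point. At each vertex $b$ of $\Delta_m^{\vee}$ the three incident spheres are modelled by three linear Lagrangian subspaces of $\bC^3$, a product of three copies of three concurrent real lines in $\bC$ (Lemma \ref{lem:constant_triangle}), and the constant triangle there is regular of index $0$. From this local model together with the $A_\infty$ dimension formula I would read off that one may normalise the local grading lifts so that the three crossings taken in the clockwise cyclic order each have degree $1$, the reverse crossings then having degree $2$ by the $CY_3$ duality $HF^k(L,L')\cong HF^{3-k}(L',L)^{\vee}$. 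In this normalisation every arrow of the quiver lands in $HF^1$, and the whole of $\oplus_{v,v'}HF^*(L_v,L_{v'})$ is forced into degrees $0\le *\le 3$.

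The real content is to show that these per-vertex normalisations glue to a single global choice of shifts $\{s_v\}$. Imposing "every arrow has degree $1$" is a system $s_{v'}-s_v=c_{vv'}$ over the directed edges; it is solvable precisely when the total defect $\sum c$ vanishes around every closed cycle, and it suffices to check this on the chordless cycles, which are exactly the primitive cycles $t_b$, $q_w$, $L_p^{(j)}$. Here I would use that the defect around such a cycle equals the Maslov index of the closed holomorphic polygon bounding it: since $\Omega$ is single-valued and nowhere zero, that index is the sum of the grading jumps at the corners with no monodromy correction, and the rigid index-$0$ representatives already constructed --- the constant triangles of Lemma \ref{lem:constant_triangle}, the internal white polygons of Lemma \ref{lem:more}, and the boundary discs over $L_p^{(j)}$ of Proposition \ref{prop:rigid_discs} --- pin each defect to $0$. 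With the system consistent, a global grading exists, all primitive polygons have Maslov index $0$, and the Floer algebra is concentrated in degrees $0$ through $3$.

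The main obstacle I anticipate is exactly this global gluing, and within it the cycles $L_p^{(j)}$ encircling a point $p\in\bP$, where the discs degenerate onto the reducible fibre and acquire boundary on the $S^1\times S^2$-type Lagrangians rather than on the spheres themselves. Consistency of the grading there should follow from the already-established vanishing of the Maslov class of the discs on $S^1\times S^2$ (via the toric $xyz$-model), which ensures the phase does not wind as one transports the grading around $p$; verifying that the eigen-ordering data of Definition \ref{defn:eigen} is compatible with this, so that the normalisation at the clusters of tripods matches the one propagated across the matching spheres, is the delicate point.
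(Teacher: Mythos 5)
Your strategy is recognisably the same as the paper's: both arguments pin down the grading shifts by using the rigid, index-zero holomorphic polygons over the primitive cycles (the constant triangles, the white polygons of Lemma \ref{lem:more}, the discs of Propositions \ref{prop:rigid_discs} and \ref{prop:rigid_quadrilateral}) to force the sum of corner degrees around each such cycle to equal the number of corners. Your repackaging of this as the solvability of the linear system $s_{v'}-s_v=c_{vv'}$ over the arrows is a clean reformulation of what the paper does by propagation.

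The gap is the sentence ``it suffices to check this on the chordless cycles, which are exactly the primitive cycles''. The system is solvable if and only if the defect cochain vanishes on all of $H_1$ of the underlying intersection graph (vertices the spheres, edges the intersection points). That graph is cellularly embedded in $\bS$, and every primitive cycle $t_b$, $q_w$, $L_p^{(j)}$ is null-homotopic on the closed surface (it bounds a black triangle, a white region, or a disc containing a puncture); hence the primitive cycles span only the kernel of $H_1(\mathrm{graph})\to H_1(\bS)$, a subgroup of corank $2g$. Since the paper's standing hypothesis is $g(\bS)>0$, there are $2g$ independent ``handle'' classes on which no rigid holomorphic polygon is available to control the defect, and your argument says nothing about them. (The standard fact that chordless cycles generate the cycle space of a graph concerns chordless \emph{undirected} cycles; the primitive cycles are the chordless \emph{directed} cycles, a strictly smaller set, and only these carry the rigid polygons you invoke.) Your final paragraph correctly identifies the gluing as the delicate point, but the mechanism you propose there --- Maslov-zero discs on the $S^1\times S^2$'s near the reducible fibres --- again only sees loops encircling points of $\bP$, not loops around handles. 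The paper avoids stating a cocycle condition on all of $H_1(\mathrm{graph})$ by instead anchoring the grading at the punctures and determining each sphere's shift through a designated $L_p^{(j)}$-polygon, so that the only compatibilities to verify are those carried by actual rigid polygons; to repair your version you would either follow that propagation, or supply a separate argument that the defect vanishes on a set of $2g$ handle cycles, e.g.\ by controlling the winding of the phase of $\Omega$ along loops in the base $S$.
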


\begin{proof} Fix an element $p\in \bP$ and grade all but one of the Lagrangians encircling $p$ -- the boundaries of a polygon projecting to $L_p^{(1)}$ -- so that their intersections have degree $1$ as Floer inputs. The existence of the rigid disc of Proposition \ref{prop:rigid_discs} implies that the Floer output has degree 2, so the gradings are in fact cyclically symmetric.  The existence of the rigid polygons over $L_p^{(j)}$ with $j \geq 1$,   together with the fact that every matching sphere belongs to the boundary of a unique $L_p^{(j)}$, imply that the gradings propagate consistently to yield a grading satisfying the  required conditions.  (It follows by additivity of Maslov index that the gradings are consistent with the existence of the rigid quadrilaterals constructed in Proposition \ref{prop:rigid_quadrilateral} below.)
\end{proof}

We now fix the background class $b\in H^2(Y_{\Phi};\bZ/2)$ which is Poincar\'e dual to a four-cycle $Z_b$ defined by `half' the irreducible components at all the singular fibres. Precisely,
\begin{equation} \label{eqn:background}
b = PD[Z_b], \quad Z_b = \sum_{p \in \bP} \bC^2_{(ev)}
\end{equation}
where the fibre $\pi^{-1}(p) \subset Y_{\Phi}$ of $p: Y_{\Phi} \to S$ is a disjoint union of $(m+1)$ ordered copies of $\bC^2$, the union of the even-indexed components of which we have labelled $\bC^2_{(ev)}$. If there are no 3-valent vertices in the ideal triangulation, one can compute the endomorphism algebra $\scrA_{\Gamma}$ equivalently by working either in $\scrF(Y)$ or in $\scrF(Y;b)$; but in the presence of 3-valent vertices and $L_p^{(1)}$ triangles, twisting by $b$ potentially affects the cohomological algebra.

\begin{prop}\label{prop:algebra_ok}
The algebra $\scrA_{\Gamma} := \oplus_{v, v' \in \Gamma(\Delta_m^{\vee})} \, HF^*(L_v, L_{v'})$ is isomorphic to the total endomorphism algebra of the category $\scrC(Q(\Delta_m), W_{\bf c}(\Delta_m))$ for a vector ${\bf c}$ of non-zero coefficients.
\end{prop}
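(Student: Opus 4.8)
The plan is to establish the hypotheses of Lemma \ref{lem:lagrangians_to_quiver} for the configuration $\Gamma(\Delta_m^\vee)$, and then to compute the primitive part of the resulting potential using the holomorphic polygon counts assembled above. Under the bijection between $\Gamma(\Delta_m^\vee)$ and the vertices $Q_0$ of $Q(\Delta_m)$, I would first match $\scrA_\Gamma$ with $\oplus_{v,v'} \Hom_{\scrC}(S_v, S_{v'})$ as a graded vector space over the semisimple ring $\oplus_v \bK_v$. Each $L_v$ is a Lagrangian $3$-sphere, so $HF^*(L_v,L_v) \cong H^*(S^3)$ supplies a degree-$0$ unit and a degree-$3$ co-unit, matching the spherical object $S_v$. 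For $v\neq v'$, Lemma \ref{lem:constant_triangle} and the (essentially) fibred structure of the configuration show that $L_v, L_{v'}$ meet transversally in a single point exactly when $v,v'$ are joined by an arrow of $Q(\Delta_m)$; the grading fixed earlier places this generator in degree $1$, and the $CY_3$ (Poincar\'e) duality $HF^2(L_v,L_{v'}) \cong HF^1(L_{v'},L_v)^\vee$ accounts for degree $2$. Thus $HF^1$ is based by $Q_1$ and the graded dimensions agree.

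I would then identify the associative product. The essential products to determine are those $HF^1\otimes HF^1 \to HF^2$, the remaining ones—in particular the perfect duality pairing $HF^1 \otimes HF^2 \to HF^3$—being forced by the $CY_3$ structure. By the open mapping theorem and the fibred structure (so that polygons project to regions of $\Delta_m^\vee$, with compactness from Lemma \ref{lem:compact}), the former are computed by rigid triangles: the black $3$-cycles $t_b$ arise from the regular constant triangles of Lemma \ref{lem:constant_triangle}, and the white $3$-cycles $q_w$ (present once $m>2$) from Lemma \ref{lem:more}. In each case the count is non-zero, giving the graded-algebra isomorphism required by Lemma \ref{lem:lagrangians_to_quiver}, so the $A_\infty$-subcategory $\scrL$ generated by $\{L_v\}$ is encoded by a cyclic potential $W_{\scrL}$ on $Q(\Delta_m)$.

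It remains to show that the projection of $W_{\scrL}$ to the span of chordless (primitive) cycles is primitive, i.e. that $t_b, q_w$ and $L_p^{(j)}$ all occur with non-zero coefficient. The cubic coefficients $c_b, c_w$ are the triangle counts above; the quartic white cycles are governed by Proposition \ref{prop:rigid_quadrilateral}, and the longer cycles $L_p^{(j)}$ by the non-vanishing disc counts of Proposition \ref{prop:rigid_discs}. Assembling these, the primitive part of $W_{\scrL}$ is $W_{\bf c}(\Delta_m)$ for some ${\bf c}\in(\bK^*)^N$, and the stated isomorphism of $\scrA_\Gamma$ with the total endomorphism algebra of $\scrC(Q(\Delta_m), W_{\bf c}(\Delta_m))$ follows.

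The \emph{main obstacle} is the non-vanishing of the coefficients $c_p^{(j)}$. The degeneration of Proposition \ref{prop:rigid_discs} reduces each count to the pair of discs on an $S^1\times S^2$ of \eqref{eqn:two_discs}, which carry opposite signs, so a naive count would cancel. This is exactly the role of the background class $b = PD[Z_b]$ of \eqref{eqn:background}: since each disc of the pair meets a different component of the reducible fibre and $Z_b$ contains precisely one component of each pair, twisting by $b$ reverses the relative sign and produces a non-zero total. One must also verify that no additional polygons contribute—this follows by confining polygons to project to $\Delta_m^\vee$ together with Lemma \ref{lem:compact}—and that the chosen gradings propagate consistently around each $p\in\bP$, as arranged above.
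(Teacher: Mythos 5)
Your proposal is correct and follows essentially the same route as the paper's proof: reduce to Lemma \ref{lem:lagrangians_to_quiver}, match the graded pieces of $\scrA_{\Gamma}$ with those of the Ginzburg category, and verify the product via non-vanishing triangle counts, with the background class $b$ of \eqref{eqn:background} preventing the sign cancellation between the paired discs of \eqref{eqn:two_discs}. The one organizational slip is that when you enumerate the triangles computing $HF^1\otimes HF^1\to HF^2$ you list only the constant triangles $t_b$ and the white $3$-cycles $q_w$, omitting the third class the paper records --- polygons over a region $L_p^{(j)}$ when $p\in\bP$ has valence $3$ in $\Delta$, which are then cubic and so contribute to the associative product itself rather than only to the higher-order part of the potential; since you do establish the non-vanishing of those counts (and the role of $b$ there) later in the argument, no content is actually missing.
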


\begin{proof}
There are three types of holomorphic triangles which contribute to the Floer product in the algebra: 
\begin{enumerate}
\item  constant triangles with image a vertex of $\Delta_m^{\vee}$; 
\item  the triangles of Lemma \ref{lem:more};
\item triangles which map to a cycle $L_p^{(1)}$ for a vertex $p$ of $\Delta$ of valence 3 (if any such exist).
\end{enumerate}
Each of these three triangle types has a non-zero count,  and all the corresponding terms appear in the potential $W_{\bf c}(\Delta_m)$. Working over $\Lambda_{\bC}$, we  take the coefficients of the first set of triangles to be $+1$, the second set of triangles to be of lowest order valuation (since the triangles become constant only after a Hamiltonian isotopy of the spheres in the configuration). The coefficients in ${\bf c}$ for a triangular $L_p^{(1)}$-region $R$  will be `large', in the sense that it will be counted by $q^{\langle [\omega], R\rangle}$.
\end{proof}

\begin{prop} \label{prop:rigid_quadrilateral}
For a primitive cycle associated to a white quadrilateral $b_w$  the corresponding count of holomorphic discs is non-trivial.
\end{prop}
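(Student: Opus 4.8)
The plan is to run the same strategy used for the white triangles in Lemma \ref{lem:more} and for the cycles $L_p^{(j)}$ in Proposition \ref{prop:rigid_discs}, now adapted to a four-sided region, exploiting the fact that all $\scrJ_\pi$-holomorphic polygons project holomorphically to the base $S$. Write $L_1, L_2, L_3, L_4$ for the four Lagrangian spheres bounding the quadrilateral $q_w$, in cyclic order, and let $R_w \subset S$ denote the corresponding white region of the cellulation $\Delta_m^{\vee}$, a topological quadrilateral whose four corners are the vertices of $\Delta_m^{\vee}$ carrying the relevant Floer intersection points. First I would use Lemma \ref{lem:tripod_vs_matching_sphere} to replace any tripod spheres among the $L_i$ by Hamiltonian-isotopic matching spheres, so that (after a symplectomorphism of a suitable exact subdomain) all four Lagrangians are exactly fibred over the four arcs bounding $R_w$. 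As in Lemma \ref{lem:more}, these isotopies are through weakly exact Lagrangians, so they induce quasi-isomorphisms in $\scrF(Y)$ with no wall-crossing, and the disc count is unchanged.

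Next, since the chosen almost complex structure lies in $\scrJ_\pi$, the projection $\pi$ is holomorphic, so any holomorphic quadrilateral $u$ with boundary on $(L_1,\dots,L_4)$ descends to a holomorphic quadrilateral $\pi\circ u$ in $S$ with boundary on the four projected arcs and corners at the four vertices; by the open mapping theorem its image, in the lowest-area class, is exactly the region $R_w$. The key point is that $R_w$, viewed as a disc with four marked boundary points, carries a definite conformal modulus, so by the Riemann mapping theorem there is a \emph{unique} holomorphic quadrilateral filling $R_w$ and matching the corners, up to reparametrization; in particular the cross-ratio of the domain is pinned down and the base map is rigid. Because the four spheres are fibred and their vanishing cycles are compatible around $R_w$ (this is precisely the closing-up of the configuration established in Proposition \ref{prop:sphere_configuration}), this base quadrilateral admits a unique lift to an essentially-fibred holomorphic section with boundary on the $L_i$; fibrewise exactness and the absence of rational curves in the $A_m$-fibre force the lift to be rigid.

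It then remains to check regularity and non-triviality of the signed count. Regularity follows from the fibred splitting of the linearised operator, exactly as for the constant triangle in Lemma \ref{lem:constant_triangle} and \cite[Lemma 4.9]{Smith:quiver}: the horizontal (base) factor is regular by automatic transversality for immersed holomorphic polygons in a Riemann surface, while the vertical factor is rigid because the spheres meet transversely in the fibres over the corners. Combined with the compactness of Lemma \ref{lem:compact}, which guarantees that no contribution escapes to the divisor at infinity, this shows that the moduli space of rigid quadrilaterals in the class of $R_w$ is a single regular point. Hence the corresponding term in the potential is $\pm q^{\langle[\omega],R_w\rangle}$, which is non-zero, so the coefficient $c_w$ of $q_w$ is non-trivial.

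The main obstacle I anticipate is this regularity/rigidity step: unlike the triangles treated earlier, a quadrilateral has a one-dimensional domain modulus, so one must verify that fixing the image to be $R_w$ cuts the expected dimension down to zero and that the linearised operator is surjective. This is where the holomorphicity of $\pi$ does the real work, reducing the transversality question to the base, where uniqueness of the Riemann map supplies both the rigidity and the surjectivity. The only additional care needed is to confirm that no holomorphic quadrilaterals in other relative homotopy classes (wrapping differently around the white region) contribute a cancelling term; this follows because any such class has strictly larger area and hence strictly higher Novikov valuation, so it cannot cancel the lowest-valuation contribution of $R_w$.
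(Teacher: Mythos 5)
Your opening move (trading tripods for matching spheres via Lemma \ref{lem:tripod_vs_matching_sphere}, with continuation maps controlling the disc count) matches the paper, and your closing observation that higher relative homotopy classes have strictly larger Novikov valuation and so cannot cancel is fine. The gap is in the middle: the Riemann mapping theorem only rigidifies the \emph{base} quadrilateral. Having pinned the domain cross-ratio to the conformal modulus of $R_w$, you still have to count holomorphic \emph{sections} of $Y_{\Phi}|_{R_w}\to R_w$ with boundary in the fibred vanishing cycles and with corners mapping to the Lefschetz critical points, and you assert rather than prove that there is a unique regular such section. That assertion is essentially the content of the proposition. ``Fibrewise exactness and the absence of rational curves'' can at best constrain the dimension of the moduli space of lifts, not establish existence, uniqueness, or the sign of the count; and the horizontal/vertical splitting of the linearised operator you invoke breaks down at the corners, where the section passes through the critical points of the fibration and the vertical boundary condition degenerates. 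For the constant triangle of Lemma \ref{lem:constant_triangle} the map is constant and the local model is standard; for a non-constant section over a four-gon there is no comparably soft argument.

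The paper sidesteps this analysis with an algebraic trick: after Lagrange surgery \cite[Chapter 10]{FO3} the quadrilateral count is converted into a count of holomorphic strips between two Lagrangian 3-spheres (Figure \ref{Fig:quadrilateral_discs}); those spheres are Hamiltonian disjoinable, so their Floer cohomology vanishes, while the Floer complex visibly has rank two with generators in adjacent degrees, forcing the differential --- i.e.\ the strip count --- to be an isomorphism, hence $\pm 1$. Stabilising the strip with an interior marked point then transfers this back to the count of sections over $b_w$. To salvage your direct approach you would need to supply the missing section count by an explicit model computation over the four-gon; otherwise the surgery-plus-vanishing argument is the efficient route.
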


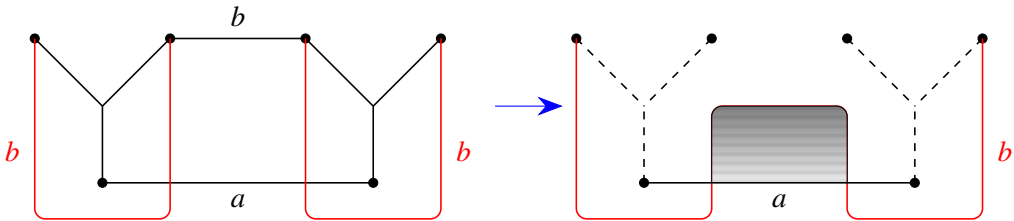
\begin{figure}[ht]
\begin{center}
\begin{tikzpicture}[scale = 0.6]

\draw[fill] (0-3,-1.7) circle (0.1);
 \draw[fill] (1.5-3,1.5) circle (0.1);
 \draw[fill] (-1.5-3,1.5) circle (0.1);
 \draw[semithick] (0-3,-1.7) -- (0-3,0);
 \draw[semithick] (1.5-3,1.5) -- (0-3,0);
 \draw[semithick] (-1.5-3,1.5) --(0-3,0);

\draw[fill] (0-9,-1.7) circle (0.1);
 \draw[fill] (1.5-9,1.5) circle (0.1);
 \draw[fill] (-1.5-9,1.5) circle (0.1);
 \draw[semithick] (0-9,-1.7) -- (0-9,0);
 \draw[semithick] (1.5-9,1.5) -- (0-9,0);
 \draw[semithick] (-1.5-9,1.5) --(0-9,0);
 
 \draw[semithick] (1.5-9,1.5) -- (-1.5-3,1.5);
 \draw[semithick] (-9,-1.7) -- (-3,-1.7);
 \draw (-6,2) node {$b$};
 \draw (-6,-2.1) node {$a$};
 \draw[semithick, red, rounded corners] (-1.5-9,1.5) -- (-1.5-9,-2.5) -- (1.5-9,-2.5) -- (1.5-9,1.5);
  \draw[semithick, red, rounded corners] (-1.5-3,1.5) -- (-1.5-3,-2.5) -- (1.5-3,-2.5) -- (1.5-3,1.5);
\draw[red]  (-11,-1) node {$b$};
 \draw[red]  (-1,-1) node {$b$};

\draw[blue,-{Stealth[scale=2.0]}] (-0.3,0) -- (1.2,0);
 
\draw[fill] (0+3,-1.7) circle (0.1);
 \draw[fill] (1.5+3,1.5) circle (0.1);
 \draw[fill] (-1.5+3,1.5) circle (0.1);
 \draw[semithick, dashed] (0+3,-1.7) -- (0+3,0);
 \draw[semithick, dashed] (1.5+3,1.5) -- (0+3,0);
 \draw[semithick, dashed] (-1.5+3,1.5) --(0+3,0);
 
\draw[fill] (0+9,-1.7) circle (0.1);
 \draw[fill] (1.5+9,1.5) circle (0.1);
 \draw[fill] (-1.5+9,1.5) circle (0.1);
 \draw[semithick,dashed] (0+9,-1.7) -- (0+9,0);
 \draw[semithick,dashed] (1.5+9,1.5) -- (0+9,0);
 \draw[semithick,dashed] (-1.5+9,1.5) --(0+9,0);
 
 \draw (6,-2.1) node {$a$};
 \draw[semithick,red,rounded corners] (1.5,1.5) -- (1.5,-2.5) -- (4.5,-2.5) -- (4.5,0)  -- (7.5,0) -- (7.5,-2.5) -- (10.5,-2.5) -- (10.5,1.5); 
 \draw[red]  (11,-1) node {$b$};
 \begin{scope}
 \clip (4.5,-1.7) rectangle  ++(3,3);
\draw[fill,shade, rounded corners] (4.5,-2) -- (4.5,0) -- (7.5,0) -- (7.5,-2) -- (4.5,-2);
 \end{scope}
   \draw[semithick] (9,-1.7) -- (3,-1.7);

\end{tikzpicture}
\end{center}
\caption{The count of discs over a primitive quadrilateral region $b_w$ is non-zero\label{Fig:quadrilateral_discs}}
\end{figure}

\begin{proof}
See Figure \ref{Fig:quadrilateral_discs}.  The count of holomorphic discs is obtained by an invertible continuation isomorphism from the count in which the tripod boundary conditions are replaced by their red Hamiltonian images.  The relation between holomorphic discs and polygons before and after Lagrange surgery \cite[Chapter 10]{FO3} relates this to the count of holomorphic strips on the right hand side of the Figure. In this picture, the two Lagrangian 3-sphere boundary conditions are Hamiltonian disjoinable; the Floer complex has total rank two, with exactly one intersection point lying over each intersection of the black and red curves in the image.  The Floer differential must be non-trivial (and hence an isomorphism). Putting one marked point in the interior of the strip $\bR\times [0,1]$ to stabilise the domain, since we are counting sections of the fibration $Y_{\Phi} \to S$ over the quadrilateral, it follows that the count of holomorphic quadrilaterals over the original domain $b_w$ is algebraically $\pm 1$.
\end{proof}

\begin{cor} The $A_{\infty}$-structure on $\scrA_{\Gamma}$ is encoded by a generic potential, i.e. one of the form $W = W_{\bf c}(\Delta_b) + W'$ for some ${\bf c} \in (\bK^*)^N$ and $W'$ concentrated on non-primitive cycles.
\end{cor}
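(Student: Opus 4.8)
The plan is to feed Proposition~\ref{prop:algebra_ok} into Lemma~\ref{lem:lagrangians_to_quiver} to obtain a cyclic potential encoding $\scrA_\Gamma$, and then to identify its primitive part with $W_{\bf c}(\Delta_m)$ by reading off the coefficients of the chordless cycles from the holomorphic polygon counts already established. First I would invoke Lemma~\ref{lem:lagrangians_to_quiver}: its hypothesis, that the total cohomological endomorphism algebra $\scrA_\Gamma$ agrees, as a graded algebra over the semisimple ring of idempotents, with the corresponding Hom-algebra in $\scrC(Q(\Delta_m))$, is exactly the content of Proposition~\ref{prop:algebra_ok}. The lemma then produces a cyclic potential $W$ on $Q(\Delta_m)$, well-defined up to right equivalence, encoding the $A_\infty$-structure on $\scrA_\Gamma$.

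Next I would project $W$ to the span of the chordless cycles, writing $W = W_{\mathrm{prim}} + W'$ with $W'$ supported on non-chordless (non-primitive) cycles. Since the chordless cycles are precisely the $t_b$, $q_w$ and $L_p^{(j)}$, the primitive part has the shape $W_{\mathrm{prim}} = \sum_b c_b\, t_b + \sum_w c_w\, q_w + \sum_{p,j} c_p^{(j)} L_p^{(j)}$, i.e.\ $W_{\mathrm{prim}} = W_{\bf c}(\Delta_m)$ for some coefficient vector ${\bf c}$. The crux is thus to show ${\bf c} \in (\bK^*)^N$, i.e.\ that every primitive cycle occurs with nonzero coefficient. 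Here I would use that the coefficient in $W$ of a chordless cycle of length $k$ is, by the book-keeping of the cyclic/Ginzburg construction underlying Lemma~\ref{lem:lagrangians_to_quiver}, the algebraic count (weighted by area in the Novikov variable) of rigid holomorphic $k$-gons whose boundary traces that cycle; this is the structure constant of $m_{k-1}$ paired against the final output. Each of the four families of such counts has been shown to be nonzero: the black triangles $t_b$ by the regular constant triangle of Lemma~\ref{lem:constant_triangle} (contributing $+1$, the lowest-valuation term); the white $3$-cycles $q_w$ by Lemma~\ref{lem:more}; the white $4$-cycles $q_w$ by Proposition~\ref{prop:rigid_quadrilateral}; and the cycles $L_p^{(j)}$, for all $1 \le j \le m$, by Proposition~\ref{prop:rigid_discs} (for the background class $b$ of \eqref{eqn:background}). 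Hence each $c_b, c_w, c_p^{(j)}$ is a nonzero element of $\bK$, so ${\bf c} \in (\bK^*)^N$, $W_{\mathrm{prim}}$ is a primitive potential, and $W$ is generic.

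I expect the only genuinely delicate point internal to this Corollary to be twofold. First, \emph{genericity is a property of the gauge class}, not of a chosen representative; I would observe that, because the projection to the primitive part is $\scrG^{diag}$-equivariant and $\scrG^{un}$ acts trivially on it (as recorded around \eqref{eqn:project_potential}), it suffices to verify nonvanishing of the chordless-cycle coefficients in the single cyclic representative supplied by the Fukaya category, which is precisely what the polygon counts compute. Second, the coefficient of a chordless cycle could in principle receive \emph{cancelling} contributions from several distinct rigid polygons in the same cycle; this is ruled out by the Novikov-valuation filtration, since in each of the four cases a single minimal-area polygon (constant, or the one surviving the surgery/continuation arguments) strictly dominates, so its contribution cannot be killed. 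The substantive difficulty therefore lies entirely in the four cited computations; granting them, the Corollary is an assembly statement, and the conclusion $W = W_{\bf c}(\Delta_m) + W'$ with $W'$ concentrated on non-primitive cycles follows immediately.
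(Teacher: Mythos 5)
Your overall structure matches the paper's: the proof there is a one-line assembly of the preceding results, observing that the coefficients of all primitive cycles are non-zero, with the counts supplied exactly by the constant triangles, Lemma \ref{lem:more}, Proposition \ref{prop:rigid_quadrilateral}, and Proposition \ref{prop:rigid_discs}. Your routing through Lemma \ref{lem:lagrangians_to_quiver} and the $\scrG^{diag}$-equivariance of the projection to the primitive part is consistent with how the paper sets things up.

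However, your resolution of the cancellation issue is wrong in precisely the case the paper singles out. You claim that for each chordless cycle ``a single minimal-area polygon strictly dominates'' in the Novikov filtration, so no cancellation can occur. For the cycles $L_p^{(j)}$ this fails: as recorded in \eqref{eqn:two_discs} and in the paper's own proof of the Corollary, there are \emph{two} rigid holomorphic discs over $L_p^{(j)}$, and they have the \emph{same} area, so the valuation argument gives no protection against cancellation. Indeed, for an untwisted choice of spin structures the two discs count with \emph{opposite} signs and the coefficient would vanish. The actual mechanism is the twist by the background class $b = PD[Z_b]$ of \eqref{eqn:background}: each of the two discs meets exactly one irreducible component of the reducible fibre over $p$, and $Z_b$ contains exactly one of that pair of components, so the twist flips precisely one of the two signs and the contributions add rather than cancel. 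Your conclusion survives because you cite Proposition \ref{prop:rigid_discs} (which already packages this non-vanishing for the correct cycle representative of $b$), but the justification you offer for why cancellation cannot occur is not the right one, and would mislead a reader into thinking the background class plays no role in this Corollary when in fact it is essential.
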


\begin{proof} The previous lemmata show that the coefficients of all the primitive cycles are non-zero; in the case of the $L_p^{(j)}$ this relies on twisting by the background class $b$ to ensure that, for each $j$,  the two contributing holomorphic discs (which have the same area) cannot cancel.
\end{proof}

\begin{rmk} \label{rmk:class_of_omega}  Lemma \ref{lem:H2Y} implies that the cohomology class of a K\"ahler form $[\omega] \in H^2(Y_{\Phi};\bR)$ is  determined by the total area of the base $S$ and its evaluation on a collection of $m$ closed surfaces at each reducible fibre whose intersection matrix with the irreducible components has rank $m$.  Such a collection of surfaces can be obtained as follows: at a point of $D$, fix some $1 \leq j \leq m$, and consider the two holomorphic discs lying over $L^{(j)}_p$.  Interpolating their boundaries fibrewise inside a component of the core $A_m$-chain of spheres in the fibres gives a 2-sphere meeting exactly two of the irreducible components. The symplectic area of such a sphere is just twice the coefficient in the potential of the primitive cycle $L_p^{(j)}$.  It follows that the map $H^2(Y_{\Phi};\bR) \supset U \to \{\mathrm{primitive \ potentials}\}$ is locally injective. On the other hand, if we divide out by gauge equivalence, then one can normalise the coefficient of $L_p^{(1)}$ to be $1$, so the `mirror map' $U \to \{\mathrm{potentials}\}/\{\mathrm{gauge}\}$ is not injective even after factoring out global rescaling of $[\omega]$. \end{rmk}

\begin{rmk}  \label{rmk:relative}
One could consider the subcategory $\scrA_{\Gamma}$  inside the `relative Fukaya category' $\scrF(Y_{\Phi},\scrD;b)$,  see \cite{Sheridan:CDM},  for $\scrD\subset Y_{\Phi}$ the divisor given by the union of fibres over $D \subset S$.  The coefficients of holomorphic polygons  in the relative Fukaya category record intersection numbers with  $\scrD$; one can then take all the coefficients for $L_p^{(j)}$ equal. Similarly, if one works with a monodromy-invariant K\"ahler form as in Remark \ref{rmk:kahler_monodromy}, then the coefficients of all the $L_p^{(j)}$ in the potential will be some fixed power $q^a$ of the Novikov variable, which brings one closer to the `canonical' potential. 
\end{rmk}

\bibliographystyle{amsalpha}
{\footnotesize{\bibliography{mybib}}}

\end{document}